\numberwithin{equation}{section}
\theoremstyle{plain}
\newtheorem{thm}[equation]{Theorem}
\newtheorem{prop}[equation]{Proposition}
\newtheorem{lem}[equation]{Lemma}
\newtheorem{cor}[equation]{Corollary}
\newtheorem*{cor*}{Corollary}
\newtheorem*{prob*}{Problem}
\newtheorem*{thm*}{Theorem}
\newtheorem*{thma*}{Theorem A}
\newtheorem*{thmb*}{Theorem B}
\newtheorem*{thmc*}{Theorem C}
\theoremstyle{remark}
\newtheorem{exm}[equation]{Example}
\newtheorem{rmk}[equation]{Remark}
\newenvironment{enumalph}
{\begin{enumerate}}
{\end{enumerate}}
\newenvironment{enumroman}
{\begin{enumerate}}
{\end{enumerate}}
\DeclareMathOperator{\Aut}{Aut}
\DeclareMathOperator{\Frob}{Frob}
\DeclareMathOperator{\Gal}{Gal}
\DeclareMathOperator{\Inn}{Inn}
\DeclareMathOperator{\M}{M}
\DeclareMathOperator{\opdiv}{div}
\DeclareMathOperator{\nrd}{nrd}
\DeclareMathOperator{\Out}{Out}
\DeclareMathOperator{\PGL}{PGL}
\DeclareMathOperator{\PSL}{PSL}
\DeclareMathOperator{\SL}{SL}
\DeclareMathOperator{\GL}{GL}
\DeclareMathOperator{\tr}{tr}
\DeclareMathOperator{\Ker}{Ker}
\newcommand{\C}{\mathbb C}
\newcommand{\F}{\mathbb F}
\newcommand{\PP}{\mathbb P}
\newcommand{\Q}{\mathbb Q}
\newcommand{\R}{\mathbb R}
\newcommand{\Z}{\mathbb Z}
\newcommand{\Qbar}{\overline{\mathbb Q}}
\newcommand{\Belyi}{Bely\u{\i}}
\newcommand{\frakd}{\mathfrak{d}}
\newcommand{\frakn}{\mathfrak{n}}
\newcommand{\frakM}{\mathfrak{M}}
\newcommand{\frakN}{\mathfrak{N}}
\newcommand{\frakp}{\mathfrak{p}}
\newcommand{\frakP}{\mathfrak{P}}
\newcommand{\calH}{\mathcal{H}}
\newcommand{\calM}{\mathcal{M}}
\newcommand{\calO}{\mathcal{O}}
\newcommand{\psmod}[1]{~(\textup{\text{mod}}~{#1})}
\newcommand{\quat}[2]{\displaystyle{\biggl(\frac{#1}{#2}\biggr)}}
\newcommand{\la}{\langle}
\newcommand{\ra}{\rangle}
\newcommand{\gunder}{\underline{g}}
\newcommand{\tunder}{\underline{t}}
\newcommand{\Cunder}{\underline{C}}
\DeclareMathOperator{\opchar}{char}
\newcommand{\Deltabar}{\overline{\Delta}}
\newcommand{\deltabar}{\bar{\delta}}
\newcommand{\Gammabar}{\overline{\Gamma}}
\newcommand{\sbwk}{{}_{\textup{wr}}}
\newcommand{\sbrat}{{}_{\textup{r}}}
\newcommand{\sbtors}{{}_{\textup{tors}}}
\newcommand{\Bhat}{\widehat{B}}
\newcommand{\calOhat}{\widehat{\mathcal O}}
\newcommand{\defi}[1]{\textsf{#1}}
\begin{document}

\title[Congruence subgroups of triangle groups]{Algebraic curves uniformized by congruence subgroups of triangle groups}

\author{Pete L.\ Clark}
\address{Department of Mathematics, University of Georgia, Athens, GA 30602, USA}
\email{pete@math.uga.edu}

\author{John Voight}
\address{Department of Mathematics and Statistics, University of Vermont, 16 Colchester Ave, Burlington, VT 05401, USA; Department of Mathematics,
  Dartmouth College, 6188 Kemeny Hall, Hanover, NH 03755, USA}
\email{jvoight@gmail.com}
\date{\today}

\begin{abstract}
We construct certain subgroups of hyperbolic triangle groups which we call ``congruence'' subgroups.  These groups include the classical congruence subgroups of $\SL_2(\Z)$, Hecke triangle groups, and $19$ families of arithmetic triangle groups associated to Shimura curves.  We determine the field of moduli of the curves associated to these groups and thereby realize the groups $\PSL_2(\F_q)$ and $\PGL_2(\F_q)$ regularly as Galois groups.
\end{abstract}

\maketitle
\tableofcontents

\section{Introduction}

\subsection*{Motivation}

The rich arithmetic and geometric theory of classical modular curves, quotients of the upper half-plane by subgroups of $\SL_2(\Z)$ defined by congruence conditions, has fascinated mathematicians since at least the nineteenth century.  One can see these curves as special cases of several distinguished classes of curves.  Fricke and Klein \cite{FK} investigated curves obtained as quotients by Fuchsian groups which arise from the unit group of certain quaternion algebras, now called arithmetic groups.  Later, Hecke \cite{Hecke} investigated his triangle groups, arising from reflections in the sides of a hyperbolic triangle with angles $0,\pi/2,\pi/n$ for $n \geq 3$.  Then in the 1960s, amidst a flurry of activity studying the modular curves, Atkin and Swinnerton-Dyer \cite{ASD} pioneered the study of noncongruence subgroups of $\SL_2(\Z)$.  In this paper, we consider a further direction: we introduce a class of curves arising from certain subgroups of hyperbolic triangle groups.  These curves share many appealing properties in common with classical modular curves despite the fact that their uniformizing Fuchsian groups are in general \emph{not} arithmetic groups.

To motivate the definition of this class of curves, we begin with the modular curves.  Let $p$ be prime and let $\Gamma(p) \subseteq \PSL_2(\Z)=\Gamma(1)$ be the subgroup of matrices congruent to (plus or minus) the identity modulo $p$.  Then $\Gamma(p)$ acts on the completed upper half-plane $\calH^*$, and the quotient $X(p)=\Gamma(p) \backslash \calH^*$ can be given the structure of Riemann surface, a modular curve.  The subgroup $G=\Gamma(1)/\Gamma(p) \subseteq \Aut(X(p))$ satisfies $G \cong \PSL_2(\F_p)$ and the natural map $j:X(p) \to X(p)/G \cong \PP_\C^1$ is a Galois branched cover ramified at the points $\{0,1728,\infty\}$.

So we are led to study class of (smooth, projective) curves $X$ over $\C$ with the property that there exists a subgroup $G \subseteq \Aut(X)$ with $G \cong \PSL_2(\F_q)$ or $G \cong \PGL_2(\F_q)$ for a prime power $q$ such that the map $X \to X/G \cong \PP^1$ is a Galois branched cover ramified at exactly three points.

\Belyi\ \cite{Belyi,Belyi2} proved that a curve $X$ over $\C$ can be defined over the algebraic closure $\Qbar$ of $\Q$ if and only if $X$ admits a \defi{\Belyi\ map}, a nonconstant morphism $f:X \to \PP^1_\C$ unramified away from $0,1,\infty$.  So, on the one hand, three-point branched covers are indeed ubiquitous.  On the other hand, there are only \emph{finitely} many curves $X$ up to isomorphism of given genus $g \geq 2$ which admit a \emph{Galois} \Belyi\ map (Remark \ref{onlyfinGaloisBelyi}).  We call a curve which admits a Galois \Belyi\ map a \defi{Galois \Belyi\ curve}.  Galois \Belyi\ curves are also called \defi{quasiplatonic surfaces}  \cite{GirondoWolfart,Wolfart97}, owing to their connection with the Platonic solids, or \defi{curves with many automorphisms} because they are equivalently characterized as the locus on the moduli space $\calM_g(\C)$ of curves of genus $g$ at which the function $[C] \mapsto \#\Aut(C)$ attains a strict local maximum.  For example, the Hurwitz curves, those curves $X$ with maximal automorphism group $\#\Aut(X)=84(g-1)$ for their genus $g$, are Galois \Belyi\ curves, as are the Fermat curves $x^n+y^n=z^n$ for $n \geq 3$.

So Galois \Belyi\ curves with Galois group $G=\PSL_2(\F_q)$ and $G=\PGL_2(\F_q)$ generalize the classical modular curves and bear further investigation.  In this article, we study the existence of these curves, and we then consider one of the most basic properties about them: the fields over which they are defined.

\subsection*{Existence}

To state our first result concerning existence we use the following notation.  For $s \in \Z_{\geq 2}$, let $\zeta_s=\exp(2\pi i/s)$ and $\lambda_s = \zeta_{s}+1/\zeta_{s}=2\cos(2\pi/s)$; by convention we let $\zeta_\infty=1$ and $\lambda_\infty=2$.

Let $a,b,c \in \Z_{\geq 2} \cup \{\infty\}$ satisfy $a \leq b \leq c$.  Then we have the following extension of fields:
\begin{equation} \tag{*}
\begin{aligned}
\xymatrix{
F(a,b,c)=\Q(\lambda_{2a},\lambda_{2b},\lambda_{2c}) \ar@{-}[d] \\
E(a,b,c)=\Q(\lambda_a,\lambda_b,\lambda_c,\lambda_{2a}\lambda_{2b}\lambda_{2c}) \ar@{-}[d] \\
D(a,b,c)=\Q(\lambda_a,\lambda_b,\lambda_c) \ar@{-}[d] \\
\Q
}
\end{aligned}
\end{equation}
We have $E(a,b,c) \subseteq F(a,b,c)$ since $\lambda_{2a}^2=\lambda_a+2$ (the half-angle formula), and consequently this extension has degree at most $4$ and exponent at most $2$.  Accordingly, a prime $\frakp$ of $E(a,b,c)$ (by which we mean a nonzero prime ideal in the ring of integers of $E(a,b,c)$) that is unramified in $F(a,b,c)$ either splits completely or has inertial degree $2$.

The triple $(a,b,c)$ is \defi{hyperbolic} if
\[ \chi(a,b,c)=\frac{1}{a}+\frac{1}{b}+\frac{1}{c}-1 < 0. \]

Our first main result is as follows.

\begin{thma*}
Let $(a,b,c)$ be a hyperbolic triple with $a,b,c \in \Z_{\geq 2}$.  Let $\frakp$ be a prime of $E(a,b,c)$ with residue field $\F_{\frakp}$ and suppose $\frakp \nmid 2abc$.  Then there exists a $G$-Galois \Belyi\ map
\[ X(a,b,c;\frakp) \to \PP^1 \]
with ramification indices $(a,b,c)$, where
\[ G=
\begin{cases}
\PSL_2(\F_\frakp), & \text{if $\frakp$ splits completely in $F(a,b,c)$}; \\
\PGL_2(\F_\frakp), &\text{otherwise}.
\end{cases}
\]

\end{thma*}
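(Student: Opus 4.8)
The plan is to pass from the geometric assertion to a statement about the triangle group via the Riemann existence theorem, to realize the desired cover as the reduction modulo $\frakp$ of the canonical characteristic-zero representation of the triangle group, and finally to force this reduction to be surjective by means of Macbeath's generation criterion (equivalently, Dickson's classification of subgroups of $\PSL_2$).

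\emph{Translation to group theory.} Let $\Delta=\Delta(a,b,c)=\langle \delta_a,\delta_b,\delta_c\mid \delta_a^a=\delta_b^b=\delta_c^c=\delta_a\delta_b\delta_c=1\rangle$ be the orientation-preserving hyperbolic triangle group, acting on $\calH$ with $\Delta\backslash\calH\cong\PP^1$ carrying cone points of orders $a,b,c$ at $0,1,\infty$. By the Riemann existence theorem, a connected $G$-Galois \Belyi\ map $X\to\PP^1$ with ramification indices exactly $(a,b,c)$ is the same datum as a surjection $\varphi\colon\Delta\twoheadrightarrow G$ with $\varphi(\delta_a),\varphi(\delta_b),\varphi(\delta_c)$ of orders exactly $a,b,c$; one then takes $X(a,b,c;\frakp)=\ker\varphi\backslash\calH$, a smooth projective curve (the ``congruence curve of level $\frakp$''). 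So it suffices to produce such a $\varphi$ with image the asserted group.

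\emph{The representation and its reduction.} By the classical uniformization of triangle groups (Fricke; cf.\ Takeuchi's explicit matrices for arithmetic cases) there is a representation $\rho\colon\Delta\to\SL_2(F)$, $F=F(a,b,c)$, unique up to conjugacy, with $\tr\rho(\delta_a)=\lambda_{2a}$, $\tr\rho(\delta_b)=\lambda_{2b}$, $\tr\rho(\delta_a\delta_b)=\pm\lambda_{2c}$ — a lift of the Fuchsian embedding $\Delta\hookrightarrow\PSL_2(\R)$ — which we may choose to have entries integral away from $2abc$. Using $\lambda_{2s}^2=\lambda_s+2$, the $\PGL_2$-conjugacy invariants of the pair $(\rho(\delta_a),\rho(\delta_b))$ — the ratios $\tr^2/\!\det$ of $\rho(\delta_a),\rho(\delta_b),\rho(\delta_a\delta_b)$ together with $\tr\rho(\delta_a)\tr\rho(\delta_b)\tr\rho(\delta_a\delta_b)=\pm\lambda_{2a}\lambda_{2b}\lambda_{2c}$ — all lie in $E=E(a,b,c)$. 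Fix $\frakp$ with $\frakp\nmid 2abc$ and a prime $\frakP\mid\frakp$ of $F$; reducing $\rho$ modulo $\frakP$ gives $\bar\rho^{(F)}\colon\Delta\to\SL_2(\F_\frakP)$, where the residue field $\F_\frakP$ is $\F_\frakp$ if $\frakp$ splits completely in $F$ and $\F_{\frakp^2}$ otherwise (these being the only options, since $F/E$ has exponent $2$). Projecting to $\PGL_2(\F_\frakP)$, the image $H$ has $\PGL_2$-conjugacy invariants in $\F_\frakp$; as $\mathrm{Br}(\F_\frakp)=0$ we may conjugate so that $H\subseteq\PGL_2(\F_\frakp)$. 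Finally, because $\frakp\nmid 2abc$, the field $\F_\frakp$ is generated over $\F_p$ by the reductions of $\lambda_a,\lambda_b,\lambda_c,\lambda_{2a}\lambda_{2b}\lambda_{2c}$, all of which are trace-invariants of $H$; hence $H$ is \emph{not} contained in $\PGL_2(\F')$ for any proper subfield $\F'\subsetneq\F_\frakp$.

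\emph{Orders, irreducibility, and generation.} An eigenvalue-ratio of a lift of $\bar\rho(\delta_a)$ satisfies $\zeta+\zeta^{-1}=\overline{\lambda_a}=\overline{\zeta_a}+\overline{\zeta_a}^{-1}$, so $\zeta$ is a primitive $a$-th root of unity (reduction being injective on $\mu_a$ as $\frakp\nmid a$); thus $\bar\rho(\delta_a)$ has order exactly $a$, and likewise for $b,c$, so the cover obtained from $\varphi$ will have the correct ramification once $H$ is identified. Moreover $\bar\rho$ is irreducible: otherwise $\Delta$ acts on an eigenline through a character $\chi$ with $\chi(\delta_a)^a=\chi(\delta_a^a)=1$, contradicting that $\chi(\delta_a)$, being an eigenvalue of $\bar\rho(\delta_a)$, is a primitive $2a$-th root of unity and so satisfies $\chi(\delta_a)^a=-1\neq1$ (here $\frakp\nmid 2$). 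Now invoke Macbeath's criterion for the triple $(\bar\rho(\delta_a),\bar\rho(\delta_b),\bar\rho(\delta_a\delta_b))$ in $\SL_2(\F_\frakP)$, with product $\pm1$, to generate $\PSL_2$ (equivalently, Dickson's classification of subgroups): $H$ equals $\PSL_2(\F_\frakp)$ or $\PGL_2(\F_\frakp)$ unless the triple of traces is \emph{exceptional}, in which case $H$ is cyclic, dihedral, or isomorphic to $A_4$, $S_4$, or $A_5$. Cyclic and Borel-type subgroups are reducible over $\overline{\F_\frakp}$ and hence already excluded, and a subfield subgroup $\PSL_2(\F')$ or $\PGL_2(\F')$ is pinned to $\F'=\F_\frakp$ by the trace-field computation above; so it remains to exclude the dihedral and the $A_4,S_4,A_5$ cases.

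\emph{Excluding the small subgroups, and the $\PSL$/$\PGL$ dichotomy.} If $H$ were dihedral, then since $(a,b,c)$ is hyperbolic at least one of $a,b,c$ — say the largest — exceeds $2$, so the corresponding $\bar\rho(\delta_\bullet)$ lies in the index-$2$ cyclic subgroup; using $\delta_a\delta_b\delta_c=1$ and the fact that a rotation times a reflection is a reflection, one checks in each case that two of $a,b,c$ are forced to equal $2$, contradicting hyperbolicity (no triple $(2,2,n)$ is hyperbolic). The element orders of $A_4,S_4,A_5$ lie in $\{1,2,3\},\{1,2,3,4\},\{1,2,3,5\}$, and no hyperbolic triple has all entries in $\{2,3\}$, so $H\not\cong A_4$; the cases $H\cong S_4$ and $H\cong A_5$ survive only for the finitely many hyperbolic triples with entries in $\{2,3,4\}$, respectively $\{2,3,5\}$. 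These last cases are the heart of the matter, and the main obstacle: for such a triple, if $\bar\rho$ had image $S_4$ (resp.\ $A_5$) then $\bar\rho^{(F)}$ would land in the binary octahedral (resp.\ icosahedral) group, and whenever $\frakp$ is prime to the order of that finite group — which holds under $\frakp\nmid 2abc$ for all but a short explicit list of pairs (triple, residue characteristic) — such a mod-$\frakp$ representation lifts back to characteristic zero with the same character, contradicting that the unique characteristic-zero representation of $\Delta$ with character $(\lambda_{2a},\lambda_{2b},\pm\lambda_{2c})$ is the Fuchsian one, which has infinite image; the remaining pairs are dispatched by direct computation (parity of permutations and element orders show that generating triples of the prescribed orders in $S_4,A_5$ are absent or incompatible with the required traces). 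Having shown $H\supseteq\PSL_2(\F_\frakp)$, the dichotomy follows: $\bar\rho(\delta_s)$ lies in $\PSL_2(\F_\frakp)$ exactly when the reduction of $\lambda_{2s}$ lies in $\F_\frakp$ (its det-$1$ lift then has square determinant), and all three reductions lie in $\F_\frakp$ precisely when $\frakp$ splits completely in $F(a,b,c)$; so $H=\PSL_2(\F_\frakp)$ in the split case, and $H=\PGL_2(\F_\frakp)$ otherwise.
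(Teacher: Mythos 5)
Your overall architecture is the same as the paper's: reduce the canonical trace-$(\lambda_{2a},\lambda_{2b},\lambda_{2c})$ representation modulo a prime $\frakP$ above $\frakp$, invoke Macbeath/Dickson to identify the image, and use the two trace fields $E(a,b,c)\subseteq F(a,b,c)$ to decide between $\PSL_2(\F_\frakp)$ and $\PGL_2(\F_\frakp)$. But there is a genuine gap at the irreducibility step, and it sits exactly where the paper's key computation lives. You derive a contradiction from a single generator, claiming $\chi(\delta_a)^a=\chi(\delta_a^a)=1$ while $\chi(\delta_a)$ is a primitive $2a$-th root of unity. This conflates two groups: if $\delta_a^a=1$ holds in your $\Delta$, then no $\SL_2$-valued homomorphism has $\tr\rho(\delta_a)=\lambda_{2a}$ at all (the eigenvalues would be $a$-th roots of unity and the trace would be $\lambda_a$); the object that exists is a representation of the central extension with $\delta_a^a=-1$, and there $\chi(\delta_a)^a=\chi(-1)=-1$ is perfectly consistent with $\chi(\delta_a)$ being a primitive $2a$-th root of unity, so no contradiction arises from one generator. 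The true obstruction to a stable line is the product relation: reducibility forces $\zeta_{2a}^{\pm1}\zeta_{2b}^{\pm1}\zeta_{2c}^{\pm1}\equiv -1 \pmod{\frakP}$ for some choice of signs, and one must show this is impossible. That is precisely the paper's Lemma \ref{pnmid}: the discriminant $\beta=\lambda_{2a}^2+\lambda_{2b}^2+\lambda_{2c}^2+\lambda_{2a}\lambda_{2b}\lambda_{2c}-4$ factors into four cyclotomic terms, each a unit modulo $\frakP\nmid 2abc$ because reduction is injective on prime-to-$p$ roots of unity and the resulting relation $1/b+1/c\equiv 1+1/a$ in $\Q/2\Z$ contradicts hyperbolicity. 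Without this input, your exclusion of the cyclic and Borel-type subgroups in Dickson's list is unsupported, and your Brauer-group descent of $H$ into $\PGL_2(\F_\frakp)$ --- which requires absolute irreducibility --- is also unjustified.

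A secondary issue is the exceptional cases. Your idea of lifting a putative finite-image reduction back to characteristic zero and contradicting the rigidity and faithfulness of the Fuchsian representation is attractive and genuinely different from the paper, which instead notes that the only hyperbolic exceptional order triples are the five arithmetic ones $(3,4,4),(2,5,5),(5,5,5),(3,3,5),(3,5,5)$ and appeals to Shimura curve theory. But your version is not complete as written: the lifting hypothesis $\frakp\nmid\#H$ is not implied by $\frakp\nmid 2abc$ (e.g.\ the triple $(5,5,5)$ with $p=3$ leaves a genuine $A_5\subseteq\PGL_2(\F_9)$ possibility that neither your trace-field argument nor your lifting argument touches), and the residual ``direct computation'' is asserted rather than carried out. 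The dihedral exclusion, by contrast, is fine once phrased as a parity count: an even number of the three images lie outside the index-two cyclic subgroup, and two of them being reflections would force two of $a,b,c$ to equal $2$.
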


We have stated Theorem A in a simpler form; for a more general statement, including the case when $\frakp \mid 2$ or when one or more of $a,b,c$ is equal to $\infty$, see Theorem \ref{thmaorders}.  In some circumstances (depending on a norm residue symbol), one can also take primes dividing $abc$ (see Remark \ref{TheoremBext}).

Theorem A generalizes work of Lang, Lim, and Tan \cite{LangLimTan} who treat the case of Hecke triangle groups using an explicit presentation of the group (see also Example \ref{exmLLT}), and work of Marion \cite{Marion} who treats the case $a,b,c$ prime.  This also complements celebrated work of Macbeath \cite{Macbeath}, providing an explicit way to distinguish between projective two-generated subgroups of $\PSL_2(\F_q)$ by a simple splitting criterion.  Theorem A overlaps with work of Conder, Poto\v{c}ink, and \v{S}ir\'a\v{n} \cite{CPS} (they also give several other references to work of this kind).

The construction of Galois \Belyi\ maps in Theorem A arises from another equivalent characterization of Galois \Belyi\ curves (of genus $\geq 2$) as compact Riemann surfaces of the form $\Gamma \backslash \calH$, where $\Gamma$ is a finite index normal subgroup of the hyperbolic triangle group
%Indeed, the map
%\[ X=\Gamma \backslash \calH \to X(a,b,c) \]
%obtained from the inclusion $\Gamma \subseteq \Delta(a,b,c)$ is a Wolfart map with Galois group $\Delta(a,b,c)/\Gamma$.  Conversely, if $f:X \to \PP^1$ is a Wolfart map with ramification indices $(a,b,c)$ then by monodromy, $f$ corresponds to a map $\Gamma=\pi_1(X) \to \Delta(a,b,c)$
\[ \Deltabar(a,b,c)=\langle \deltabar_a,\deltabar_b,\deltabar_c \mid \deltabar_a^a=\deltabar_b^b=\deltabar_c^c=\deltabar_a\deltabar_b\deltabar_c=1 \rangle \subset \PSL_2(\R) \]
for some $a,b,c \in \Z_{\geq 2}$, where by convention we let $\deltabar_\infty^\infty=1$.  (See Sections 1--2 for more detail.  The bars may seem heavy-handed here, but signs play a delicate and somewhat important role in the development, so we include this as part of the notation for emphasis.)  Phrased in this way, Theorem A asserts the existence of a normal subgroup
\[ \Deltabar(\frakp)=\Deltabar(a,b,c;\frakp) \trianglelefteq \Deltabar(a,b,c)=\Deltabar \] with quotient $\Deltabar/\Deltabar(\frakp)=G$ as above.  In a similar way, one obtains curves $X_0(a,b,c;\frakp)$ by considering the quotient of $X(a,b,c;\frakp)$ by the subgroup of upper-triangular matrices---these curves are analogous to the classical modular curves $X_0(p)$ as quotients of $X(p)$.

\subsection*{Arithmeticity}

A Fuchsian group is \defi{arithmetic} if it is commensurable with the group of units of reduced norm $1$ of a maximal order in a quaternion algebra.  A deep theorem of Margulis \cite{Margulis} states that a Fuchsian group is arithmetic if and only if it is of infinite index in its commensurator.  By work of Takeuchi \cite{Takeuchi}, only finitely many of the groups $\Delta(a,b,c)$ are arithmetic: in these cases, the curves $X(a,b,c;\frakp)$ are \defi{Shimura curves} (arising from full congruence subgroups) and canonical models were studied by Shimura \cite{Shimura} and Deligne \cite{Deligne}.  Indeed, the curves $X(2,3,\infty;p)$ are the classical modular curves $X(p)$ and the Galois \Belyi\ map $j:X(p) \to \PP^1$ is associated to the congruence subgroup $\Gamma(p) \subseteq \PSL_2(\Z)$.  Several other arithmetic families of Galois \Belyi\ curves have seen more detailed study, most notably the family $X(2,3,7;\frakp)$ of Hurwitz curves.  (It is interesting to note that the arithmetic triangle groups are among the examples given by Shimura \cite[Example 3.18]{Shimura}!)  Aside from these finitely many triples, the triangle group $\Delta=\Delta(a,b,c)$ is \emph{not} arithmetic, and our results can be seen as a generalization in this nonarithmetic context.

However, we still have an embedding inside an arithmetic group $\Gamma$, following Takeuchi \cite{Takeuchi} and later work of Tretkoff (n\'ee Cohen) and Wolfart \cite{CohenWolfart}: our curves are obtained via pullback
\[
\xymatrix{
\Deltabar(\frakp) \backslash \calH \ar[r] \lhook\mkern-7mu \ar[d] & \Gamma(\frakp) \backslash \calH^s \ar[d] \\
\PP^1=\Deltabar \backslash \calH \lhook\mkern-7mu \ar[r] & \Gamma(1) \backslash \calH^s
} \]
from a branched cover of quaternionic Shimura varieties, and this promises further arithmetic applications.  Accordingly, we call the subgroups $\Deltabar(a,b,c;\frakp)$ we construct \defi{congruence} subgroups of $\Deltabar$ in analogy with the classical case of modular curves, since they arise from certain congruence conditions on matrix entries.  (In some contexts, the term \emph{congruence} is used only for arithmetic groups; we propose the above extension of this terminology to non-arithmetic groups.)  For a fuller discussion of the arithmetic cases of Theorem A, see Example \ref{Shimurafielddef}.

\subsection*{Field of definition}

Our second main result studies fields of definition.  The modular curve $X(p)$, a Riemann surface defined over $\C$, has a model as an algebraic curve defined over $\Q$; we seek a similar (nice, explicit) result for our class of curves.  For a curve $X$ defined over $\C$, the \defi{field of moduli} $M(X)$ of $X$ is the fixed field of the group $\{\sigma \in \Aut(\C) : X^{\sigma} \cong X\}$, where $X^{\sigma}$ is the base change of $X$ by the automorphism $\sigma \in \Aut(\C)$.  A field of definition for $X$ clearly contains the field of moduli of $X$, so if $X$ has a minimal field of definition $F$ then $F$ is necessarily equal to the field of moduli.

We will need two refinements of this notion.  First, we define the notion for branched covers.  We say that two \Belyi\ maps $f:X \to \PP^1$ and $f':X' \to \PP^1$ are \defi{isomorphic} (over $\C$ or $\Qbar$) if there exists an isomorphism $h:X \xrightarrow{\sim} X'$ that respects the branched covers, i.e., such that $f=f'\circ h$.  We define the field of moduli $M(X,f)$ of a \Belyi\ map analogously.  A Galois \Belyi\ map can always be defined over its field of moduli (Lemma \ref{canbedefined}) as \defi{mere cover}.

But we will also want to keep track of the Galois automorphisms of the branched cover.  For a finite group $G$, a \defi{$G$-Galois \Belyi\ map} is a \Belyi\ map $f:X \to \PP^1$ equipped with an isomorphism $i:G \xrightarrow{\sim} \Gal(f)$ between $G$ and the Galois (monodromy) group of $f$, and an isomorphism of $G$-Galois \Belyi\ maps is an isomorphism $h$ of \Belyi\ maps that identifies $i$ with $i'$, i.e.,
\begin{center}
$h(i(g)x)=i'(g)h(x)$ for all $g \in G$ and $x \in X(\C)$
\end{center}
so the diagram
\[
\xymatrix{
X \ar[d]_{i(g)} \ar[rr]^{h} & & X' \ar[d]^{i'(g)} \\
X \ar[dr]_{f} \ar[rr]^{h} & & X' \ar[dl]^{f'} \\
& \PP^1
} \]
commutes.  We define the field of moduli $M(X,f,G)$ of a $G$-Galois \Belyi\ map $f$ accordingly.    For example, we have
\[ M(X(p),j,\PSL_2(\F_p))=\Q(\sqrt{p^*}) \quad \text{where $p^*=(-1)^{(p-1)/2} p$}. \]
A $G$-Galois \Belyi\ map $f$ can be defined over its field of moduli $M(X,f,G)$ under the following condition: if $G$ has trivial center $Z(G)=\{1\}$ and $G=\Aut(X)$ (otherwise, take a further quotient).

On the one hand, we observe (Remark \ref{allWolfart}) that for any number field $K$ there is a $G$-Galois \Belyi\ map $f$ for some finite group $G$ such that the field of moduli of $(X,f,G)$ contains $K$.  On the other hand, we will show that our curves have quite nice fields of definition.  (See also work of Streit and Wolfart \cite{StreitWolfart} who consider the case $G \cong \Z/p\Z \rtimes \Z/q\Z$.)

We need one further bit of notation.  For a prime $p$ and integers $a,b,c \in \Z_{\geq 2}$, let $D_{p'}(a,b,c)$ be the compositum of the fields $\Q(\lambda_s)$ with $s \in \{a,b,c\}$ prime to $p$.  (For example, if all of $a,b,c$ are divisible by $p$, then $D_{p'}(a,b,c)=\Q$.)  Similarly define $F_{p'}(a,b,c)$.

\begin{thmb*}
Let $X$ be a curve of genus $g \geq 2$ and let $f:X \to \PP^1$ be a $G$-Galois \Belyi\ map with $G\cong \PGL_2(\F_q)$ or $G \cong \PSL_2(\F_q)$.  Let $(a,b,c)$ be the ramification indices of $f$.

Then the following statements hold.

\begin{enumalph}
\item Let $r$ be the order of $\Frob_p$ in $\Gal(F_{p'}(a,b,c)/\Q)$.  Then
\[ q = \begin{cases}
\sqrt{p^r}, & \text{ if $G \cong \PGL_2(\F_q)$;} \\
p^r, & \text{ if $G \cong \PSL_2(\F_q)$.}
\end{cases} \]
\item The map $f$ as a mere cover is defined over its field of moduli $M(X,f)$.  Moreover, $M(X,f)$ is an extension of $D_{p'}(a,b,c)^{\la \Frob_p \ra}$ of degree $d_{(X,f)} \leq 2$.  If $a=2$ or $q$ is even, then $d_{(X,f)}=1$.
\item The map $f$ as a $G$-Galois \Belyi\ map is defined over its field of moduli $M(X,f,G)$.  Let
\[ D_{p'}(a,b,c)\{\sqrt{p^*}\}=\begin{cases}
D_{p'}(a,b,c)(\sqrt{p^*}), &\text{ if $p \mid abc$, $pr$ is odd, and $G \cong \PSL_2(\F_q)$}; \\
D_{p'}(a,b,c) &\text{ otherwise.}
\end{cases} \]
Then $M(X,f,G)$ is an extension of $D_{p'}(a,b,c)\{\sqrt{p^*}\}$ of degree $d_{(X,f,G)} \leq 2$.  If $q$ is even or $p \mid abc$ or $G \cong \PGL_2(\F_q)$, then $d_{(X,f,G)} =1$.
\end{enumalph}
\end{thmb*}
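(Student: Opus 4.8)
The plan is to recast the problem in terms of generating triples in $G$ and to evaluate the resulting $\Gal(\Qbar/\Q)$-action through the branch-cycle formula. Put $N=\lcm(a,b,c)$. The first step is the translation recalled after Theorem~A: isomorphism classes of $G$-Galois \Belyi\ maps with ramification indices $(a,b,c)$ correspond to $\Inn(G)$-classes of generating triples $(g_a,g_b,g_c)\in G^3$ with $g_ag_bg_c=1$, $\langle g_a,g_b\rangle=G$, and $\ord(g_x)=x$; the underlying mere covers correspond to $\Aut(G)$-classes of such triples (plus, when two of $a,b,c$ agree, a harmless finite relabeling ambiguity). For $G=\PSL_2(\F_q)$ or $\PGL_2(\F_q)$ with $X$ of genus $\ge 2$ one has $Z(G)=1$ and $\Aut(X)=G$ (else pass to a quotient), so Lemma~\ref{canbedefined} gives the two definability assertions in (b) and (c). Finally, by the classical trace description of the $\GL_2$-character variety of the free group on two generators, the $\PGL_2(\F_q)$-conjugacy class of $(g_a,g_b)$ is pinned down by the traces $(\theta_a,\theta_b,\theta_c)$ of suitable $\SL_2$- or $\PGL_2$-lifts of $(g_a,g_b,g_ag_b)$; since in $\PSL_2(\F_q)$ each $\theta_x$ is defined only up to sign, the $\Inn(G)$-class of the triple is recorded by $\theta_a^2,\theta_b^2,\theta_c^2$ together with the product $\theta_a\theta_b\theta_c$ (shared by all four sign-consistent $\SL_2$-lifts), and its $\Aut(G)$-class by the same data modulo $\Gal(\F_q/\F_p)$.

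\emph{Part (a).} An element of order $x$ prime to $p$ has $\SL_2$-trace $\pm\theta_x$ with $\theta_x$ a reduction of $\lambda_{2x}$ (hence $\theta_x^2$ a reduction of $\lambda_x+2$), while an element of order $p$ is unipotent of trace $\pm2\in\F_p$. By Macbeath's theorem \cite{Macbeath}, $\F_q$ is generated over $\F_p$ by the traces of the triple; so $\F_q$ is the residue field of $F_{p'}(a,b,c)$ at a prime above $p$ when the triple lies in a conjugate of $\PSL_2(\F_q)$, and the residue field of $E_{p'}(a,b,c)$ at a prime above $p$ when it lies only in a conjugate of $\PGL_2(\F_q)$ (the case where some $\theta_x\notin\F_q$ but all $\theta_x^2$ and $\theta_a\theta_b\theta_c$ lie in $\F_q$). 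As $F_{p'}(a,b,c)/\Q$ is abelian and unramified above $p$, its residue degree at $p$ is the order $r$ of $\Frob_p$, whence $q=p^r$; in the $\PGL$ case $E_{p'}(a,b,c)\subseteq F_{p'}(a,b,c)$ has index $2$ with $p$ inert, so the residue degree halves and $q=p^{r/2}=\sqrt{p^r}$.

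\emph{Parts (b) and (c).} By the branch-cycle formula, $\sigma\in\Gal(\Qbar/\Q)$ acts on generating triples by $g_x\mapsto g_x^{t(\sigma)}$, where $t(\sigma)\in(\Z/N\Z)^\times$ is given by $\sigma(\zeta_N)=\zeta_N^{t(\sigma)}$, up to $\Inn(G)$ for the $G$-Galois structure and up to $\Aut(G)$ for the mere cover. For $x$ prime to $p$ the conjugacy class $C_x$ of $g_x$ is determined by $\theta_x^2=\lambda_x+2$ reduced modulo $\frakp$, and using the Weyl element of $\PSL_2(\F_q)$, which inverts every torus, one gets $C_x^{t(\sigma)}=C_x$ iff $\sigma$ fixes $\lambda_x$ modulo $\frakp$; for $x=p$ and $q$ odd there are two unipotent classes in $\PSL_2(\F_q)$, distinguished by a square versus a non-square, permuted by $\Gal(\Qbar/\Q)$ through the quadratic character with fixed field $\Q(\sqrt{p^*})$, nontrivially exactly when $r$ is odd (otherwise $\F_p^\times\subseteq(\F_q^\times)^2$). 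Combining, the stabilizer of the $G$-Galois cover cuts out exactly $D_{p'}(a,b,c)\{\sqrt{p^*}\}$ — the $\sqrt{p^*}$ appearing precisely when $p\mid abc$, $pr$ is odd, and $G\cong\PSL_2(\F_q)$ — up to one further at-most-quadratic datum, the sign of $\theta_a\theta_b\theta_c$ (equivalently $E_{p'}(a,b,c)/D_{p'}(a,b,c)$), which is already trivial over $D_{p'}(a,b,c)$ when $q$ is even, when $p\mid abc$ (a unipotent generator has trace $\pm2\in\F_p$), or when $G\cong\PGL_2(\F_q)$; this gives (c). For the mere cover one further divides by $\Aut(G)/\Inn(G)=\Gal(\F_q/\F_p)=\langle\Frob_p\rangle$: the diagonal automorphism of $\PSL_2(\F_q)$ swaps the two unipotent classes (so the $\sqrt{p^*}$ term drops out), and $\Frob_p$ acts on each $C_x$ as the decomposition-group action on $\lambda_x$ modulo $\frakp$, so the stabilizer becomes the preimage of the decomposition group of a prime of $D_{p'}(a,b,c)$ above $p$; hence $M(X,f)=D_{p'}(a,b,c)^{\langle\Frob_p\rangle}$, up to the same datum, an extension of degree $d_{(X,f)}\le2$, with $d_{(X,f)}=1$ when $a=2$ (so $\theta_a=\lambda_4=0$) or $q$ is even. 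This gives (b).

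\emph{Main obstacle.} The crux is the Macbeath-type input underlying the first two paragraphs: matching the $\Inn$- and $\Aut$-classes of generating triples of orders $(a,b,c)$ in $\PSL_2(\F_q)$ and $\PGL_2(\F_q)$ precisely with the arithmetic above $p$ of $F_{p'}(a,b,c)$ and $E_{p'}(a,b,c)$, keeping careful track of the spinor-norm sign $\theta_a\theta_b\theta_c$ and, when $p\mid abc$, of the two unipotent classes, and then pushing all of this faithfully through the branch-cycle action while separating the mere-cover invariants — where Frobenius is absorbed, hence $D_{p'}$ and $F_{p'}$ rather than $D$ and $F$ — from the finer $G$-Galois invariants. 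The remaining points (verifying $\Aut(X)=G$ so that Lemma~\ref{canbedefined} applies, and the degenerations at $a=2$, $q$ even, and $p\mid abc$) are comparatively routine.
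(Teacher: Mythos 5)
Your overall strategy coincides with the paper's: translate the cover into a generating triple, use Macbeath's classification to read $q$ off the trace field (your part (a) is essentially the paper's Theorem \ref{allbutthma} and is sound), and compute fields of moduli from the cyclotomic action on conjugacy classes together with the fields of rationality of Lemmas \ref{PGL2fieldofrat}--\ref{PSL2fieldofrat}. In parts (b)--(c), however, there are two genuine gaps. The first concerns descent of the $G$-Galois structure: Weil descent (Lemma \ref{lem:canbedefinedG}) requires $C_{\Aut(X)}(G)=\{1\}$, and your parenthetical ``$\Aut(X)=G$ (else pass to a quotient)'' does not supply this --- if $\Aut(X)\supsetneq G$, then $X\to X/\!\Aut(X)$ is a different \Belyi\ map with different ramification data, so passing to it proves nothing about $f$. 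The paper needs a dedicated argument here: the centralizer is trivial when $(a,b,c)$ is maximal, and for nonmaximal triples one must run through the inclusions \eqref{fam2}--\eqref{fam1} and check that the relevant fields agree for the normal inclusions in \eqref{fam1}.

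The second gap is your claim that the $\Inn(G)$-class of a generating triple is recorded by $\theta_a^2,\theta_b^2,\theta_c^2$ together with $\theta_a\theta_b\theta_c$. For odd $q$ this is false: by Macbeath (Proposition \ref{MACTHM3}(c)) there are \emph{two} $\PSL_2(\F_q)$-orbits on triples with a fixed trace triple, interchanged by the diagonal outer automorphism $\tau$. That extra factor of $2$ is exactly what Proposition \ref{Autactstransitive} tracks, and omitting it makes your accounting of $d_{(X,f,G)}$ fail in concrete cases: for $a=2$, $p\nmid abc$, $G\cong\PSL_2(\F_q)$ with $q$ odd, your only residual datum (the sign of $\theta_a\theta_b\theta_c$) vanishes because $\theta_a=0$, which would force $d_{(X,f,G)}=1$, whereas in fact $\#\Sigma(\Cunder)/\Inn(G)=2$ there and only $d_{(X,f,G)}\le 2$ is asserted. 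Conversely, when $p\mid abc$ the two orbits carry the two distinct unipotent classes, only one of which lies in $\Cunder$, and that is how the paper obtains rigidity (hence $d_{(X,f,G)}=1$) in that case. Your sign-of-the-product datum does capture the other half of the bookkeeping (the comparison of $(t_1,t_2,t_3)$ with $(-t_1,t_2,t_3)$ in Case 2 of Proposition \ref{Autactstransitive}, governed by irregularity), but by itself it neither yields the stated upper bounds nor the stated degenerations.
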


(Not all $G$-Galois \Belyi\ maps with $G \cong \PGL_2(\F_q)$ or $G \cong \PSL_2(\F_q)$ arise from the construction in Theorem A, but Theorem B applies to them all.)

The various fields of moduli fit into the following diagram.
\[
\xymatrix@C=0pt{
& M(X,G,f) \ar@{-}[d]^{d_{(X,f,G)}\leq 2} \ar@{-}[dl]  \\
M(X,f) \ar@{-}[d]_{d_{(X,f)}\leq 2} & D_{p'}(a,b,c)\{\sqrt{p^*}\}=\Q(\lambda_a,\lambda_b,\lambda_c)_{p'}\{\sqrt{p^*}\} \ar@{-}[dl] \\
D_{p'}(a,b,c)^{\la \Frob_p \ra}=\Q(\lambda_a,\lambda_b,\lambda_c)_{p'}^{\la \Frob_p \ra}
} \]

As a simple special case of Theorem B, we have the following corollary.

\begin{cor*}
Suppose $f:X \to \PP^1$ is a $\PSL_2(\F_q)$-Galois \Belyi\ map with ramification indices $(2,3,c)$ and suppose $p \nmid 6c$ is prime and a primitive root modulo $2c$.  Then $q=p^r$ where $r=\phi(2c)/2$ and $f$ is defined over $\Q$.  Moreover, the monodromy group $\Gal(f)$ is defined over an at most quadratic extension of $\Q(\lambda_{p})$.
\end{cor*}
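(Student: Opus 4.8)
The plan is to derive the corollary directly from Theorem~B applied to the triple $(a,b,c) = (2,3,c)$; the only real content is a bookkeeping computation with small cyclotomic fields, together with the Galois-theoretic translation of the primitive-root hypothesis. First I would pin down the number fields that occur in Theorem~B. Because $p \nmid 6c$, the prime $p$ is coprime to each of $2$, $3$, and $c$, so the ``$p'$'' subscripts are inert: $F_{p'}(2,3,c) = F(2,3,c)$ and $D_{p'}(2,3,c) = D(2,3,c)$ in the notation of the tower (*). Since $\lambda_2 = -2$, $\lambda_3 = -1$, $\lambda_4 = 0$, and $\lambda_6 = 1$ all lie in $\Q$, reading off (*) gives
\[ F_{p'}(2,3,c) = \Q(\lambda_4,\lambda_6,\lambda_{2c}) = \Q(\lambda_{2c}), \qquad D_{p'}(2,3,c) = \Q(\lambda_2,\lambda_3,\lambda_c) = \Q(\lambda_c), \]
where $\Q(\lambda_{2c}) = \Q(\zeta_{2c})^+$ is the maximal totally real subfield of the $2c$-th cyclotomic field, of degree $\phi(2c)/2$ over $\Q$, and it contains $\Q(\lambda_c)$.

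Next I would feed in the hypothesis on $p$. Saying that $p$ is a primitive root modulo $2c$ is exactly saying that $\Frob_p$ generates $\Gal(\Q(\zeta_{2c})/\Q) \cong (\Z/2c\Z)^\times$; since restriction to $\Gal(\Q(\lambda_{2c})/\Q) \cong (\Z/2c\Z)^\times/\{\pm 1\}$ is surjective, $\Frob_p$ also generates $\Gal(F_{p'}(2,3,c)/\Q)$, so its order there is $r = \phi(2c)/2$. Theorem~B(a), in the $\PSL_2$ case, then yields $q = p^r$ with $r = \phi(2c)/2$. For the field of definition of $f$ as a mere cover: since $\Frob_p$ generates $\Gal(\Q(\lambda_{2c})/\Q)$, its fixed field there is $\Q$, hence its fixed field in the subfield $\Q(\lambda_c) = D_{p'}(2,3,c)$ is also $\Q$, i.e.\ $D_{p'}(2,3,c)^{\la \Frob_p \ra} = \Q$; as $a = 2$, Theorem~B(b) gives $d_{(X,f)} = 1$, so $M(X,f) = \Q$ and $f$ is defined over $\Q$. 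Finally, because $p \nmid abc$ the decoration $\{\sqrt{p^*}\}$ is trivial, so Theorem~B(c) exhibits $M(X,f,G)$ as an extension of $D_{p'}(2,3,c) = \Q(\lambda_c)$ of degree $d_{(X,f,G)} \leq 2$, which is the claimed bound for the field over which the $\PSL_2(\F_q)$-Galois structure (equivalently $\Gal(f)$) is defined.

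I do not anticipate a serious obstacle beyond citing Theorem~B: the points that need any care are the evaluations $\lambda_2,\lambda_3,\lambda_4,\lambda_6 \in \Q$, which collapse the top two layers of (*) onto $\Q(\lambda_{2c})$ and $\Q(\lambda_c)$ and render the ``$p'$'' subscripts inert, and the elementary observation that a generator of $(\Z/2c\Z)^\times$ maps to a generator of the quotient $(\Z/2c\Z)^\times/\{\pm 1\} \cong \Gal(\Q(\lambda_{2c})/\Q)$, so that the primitive-root hypothesis really does determine $r$ and force the relevant fixed field to equal $\Q$. One should also record that hyperbolicity of $(2,3,c)$ (so $c \geq 7$, whence $g \geq 2$ as required in Theorem~B) is forced by the hypotheses, since for $G \cong \PSL_2(\F_q)$ with $q = p^r$ and $p \geq 5$ the group is far too large to act on a curve of genus $0$ or $1$ with a three-point branch locus of indices $2, 3, c$.
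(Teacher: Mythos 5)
Your proposal is correct and is exactly the paper's intended argument: the corollary is stated as ``a simple special case of Theorem B'' with no separate proof, and the content is precisely the specialization you carry out (collapsing $F_{p'}(2,3,c)$ to $\Q(\lambda_{2c})$ and $D_{p'}(2,3,c)$ to $\Q(\lambda_c)$, translating the primitive-root hypothesis into $\Frob_p$ generating $\Gal(\Q(\lambda_{2c})/\Q)$, and invoking parts (a)--(c) with $a=2$ and $p\nmid abc$). One point to flag: your computation yields that $M(X,f,G)$ is an at most quadratic extension of $\Q(\lambda_c)$, whereas the statement as printed says $\Q(\lambda_p)$; this is evidently a slip in the statement (compare Example \ref{23pp}, where $c=p$ and the two coincide), so you should say explicitly that you are proving the corrected assertion with $\Q(\lambda_c)$ rather than claiming it matches the literal text.
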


To prove Theorem B, we use a variant of the rigidity and rationality results which arise in the study of the inverse Galois problem  \cite{MalleMatzat,Volklein} and apply them to the groups $\PSL_2(\F_q)$ and $\PGL_2(\F_q)$.  We use the classification of subgroups of $\PSL_2(\F_q)$ generated by two elements provided by Macbeath \cite{Macbeath}.  The statements $q=\sqrt{p^r}$ and $q=p^r$, respectively, can be found in earlier work of Langer and Rosenberger \cite[Satz (4.2)]{LangerRosenberger}; our proof follows similar lines.  Theorem B generalizes work of Schmidt and Smith \cite[Section 3]{SchmidtSmith} who consider the case of Hecke triangle groups as well as work of Streit \cite{Streit} and D{\v{z}}ambi{\'c} \cite{Dzambic} who considers Hurwitz groups, where $(a,b,c)=(2,3,7)$.

\subsection*{Composite level}

The congruence subgroups so defined naturally extend to composite ideals, and so they form a projective system (Proposition \ref{densesub}).  For a prime $\frakp$ of $E$ and $e \geq 1$, let $P(\frakp^e)$ be the group
\[ P(\frakp^e)=
\begin{cases}
\PSL_2(\Z_E/\frakp^e), &\text{ if $\frakp$ splits completely in $F$;} \\
\PGL_2(\Z_E/\frakp^e), &\text{ otherwise}
\end{cases} \]
where $\Z_E$ denotes the ring of integers of $E$.  For an ideal $\frakn$ of $\Z_E$, let $P(\frakn)=\prod_{\frakp^e \parallel \frakn} P(\frakp^e)$, and let $\widehat{P}=\varprojlim_{\frakn} P(\frakn)$ be the projective limit of $P(\frakn)$ with respect to the ideals $\frakn$ with $\frakn \nmid 6abc$.

\begin{thmc*}
$\Deltabar(a,b,c)$ is dense in $\widehat{P}$.
\end{thmc*}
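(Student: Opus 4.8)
The plan is to recast the density statement as a finite-level surjectivity statement and then to run a Goursat argument one prime power at a time. A subgroup of the profinite group $\widehat{P}=\varprojlim_\frakn P(\frakn)$ is dense if and only if its image in each finite quotient $P(\frakn)$ is all of $P(\frakn)$; so by Proposition~\ref{densesub}, which produces the compatible surjections $\Deltabar \to P(\frakp^e)$ and hence the map $\Deltabar \to \widehat{P}$, and by the Chinese Remainder Theorem decomposition $P(\frakn)=\prod_{\frakp^e \parallel \frakn} P(\frakp^e)$, it suffices to prove the following: if $\frakn_1,\frakn_2$ are coprime ideals of $\Z_E$, each prime to $6abc$, and $\Deltabar$ surjects onto $P(\frakn_1)$ and onto $P(\frakn_2)$, then $\Deltabar$ surjects onto $P(\frakn_1)\times P(\frakn_2)$; an induction on the number of prime divisors of $\frakn$ then finishes the reduction.

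Let $H\subseteq P(\frakn_1)\times P(\frakn_2)$ be the image of $\Deltabar$. By hypothesis $H$ surjects onto each factor, so by Goursat's lemma $H$ is the fibre product of $P(\frakn_1)$ and $P(\frakn_2)$ over a common quotient $Q$, through which the two maps from $\Deltabar$ factor compatibly, and $H$ is all of the product precisely when $Q=\{1\}$. To rule out a nontrivial $Q$ I would first record the subnormal structure of $P(\frakp^e)$: since $\frakp\nmid 6abc$ the residue field $\F_\frakp$ has at least five elements, $\PSL_2(\Z_E/\frakp^e)$ is perfect, its only nonabelian composition factor is the simple group $\PSL_2(\F_\frakp)$, its remaining composition factors are cyclic of order the residue characteristic of $\frakp$ (coming from the congruence kernel when $e\ge 2$), and $P(\frakp^e)$ has a nontrivial abelian quotient only when it is of $\PGL$-type, in which case its abelianization is exactly $\Z/2\Z$. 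It follows that a nontrivial common quotient $Q$ must be of one of two kinds: (i) $Q$ has a nonabelian composition factor, and then $\PSL_2(\F_\frakp)\cong\PSL_2(\F_\frakq)$ for some primes $\frakp\mid\frakn_1$ and $\frakq\mid\frakn_2$; or (ii) $Q\cong\Z/2\Z$ and the relevant prime factors of $\frakn_1$ and $\frakn_2$ are both of $\PGL$-type.

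Case~(i) I would dispatch as in the proof of Theorem~\ref{thmaorders}: an isomorphism $\PSL_2(\F_\frakp)\cong\PSL_2(\F_\frakq)$ with residue characteristics at least $5$ forces $\#\F_\frakp=\#\F_\frakq$, and the compatibility of the two maps then forces the two surjections of $\Deltabar$ onto this simple group to differ by an automorphism; by the trace-triple rigidity of irreducible two-generator subgroups of $\PSL_2$ (Macbeath), this means the standard traces $\lambda_{2a},\lambda_{2b},\lambda_{2c}$ of the generators have the same reductions modulo $\frakp$ and modulo $\frakq$, which is impossible for distinct primes $\frakp,\frakq$ since these traces generate $F(a,b,c)$ over $\Q$ (one handles the finitely many exceptional primes directly). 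Case~(ii) is the real heart of the matter, and is where the precise assembly of $\widehat{P}$ out of the $\PGL/\PSL$ dichotomy must be used: the composite $\Deltabar\to P(\frakp^e)\to\Z/2\Z$ is, on the abelianization of $\Deltabar$, the quadratic character measuring the reduced norms of the images of the triangle generators modulo $\frakp$, i.e.\ the character cut out by the multiquadratic extension $F(a,b,c)/E(a,b,c)$, whose nontriviality at $\frakp$ is precisely the condition that $P(\frakp^e)$ be of $\PGL$-type. I expect the main difficulty to lie here: one must show that the half-angle relation $\lambda_{2s}^2=\lambda_s+2$ together with the relation $\lambda_{2a}\lambda_{2b}\lambda_{2c}\in E(a,b,c)$ pins these reduced norms into a single square class compatibly with the way the factors of $\widehat{P}$ are glued, so that no diagonal copy of $\Z/2\Z$ can appear in $H$. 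Once case~(ii) is settled one has $Q=\{1\}$ in all cases, hence $H=P(\frakn_1)\times P(\frakn_2)$, and the induction closes, giving density of $\Deltabar$ in $\widehat{P}$.
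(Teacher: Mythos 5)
Your overall architecture---reduce density to surjectivity at each finite level, establish the prime-power case, then glue coprime levels by induction---is the same as the paper's proof of Proposition \ref{densesub}, but the two steps you leave open are handled there by different tools, and one of them conceals a real problem. For the prime-power case you cite Proposition \ref{densesub} itself for the surjections $\Deltabar \to P(\frakp^e)$, which is circular: surjectivity is only known at prime level (Theorem \ref{thmaorders} via Macbeath), and lifting it to $e \geq 2$ requires an argument; the paper uses Lemma \ref{lemsurj}, Serre's result that a closed subgroup of $\PSL_2(\Z_{F,\frakp})$ or $\PGL_2(\Z_{F,\frakp})$ surjecting onto the residue quotient is everything. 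For the coprime step the paper does not run a Goursat analysis: it invokes strong approximation to get $\calO(\frakM)_1^\times\calO(\frakN)_1^\times=\calO_1^\times$ and then asserts $\Deltabar(\frakM)\Deltabar(\frakN)=\Deltabar$. Your case (i) also needs more care than ``the traces generate $F(a,b,c)$'': an automorphism of $\PSL_2(\F_q)$ may twist by Frobenius, and traces in $\PSL_2$ are only defined up to sign, so agreement of the two reductions is forced only on $\Q(\lambda_a,\lambda_b,\lambda_c)$; distinct primes of $E(a,b,c)$ above the same prime of that subfield must be treated separately, and this is exactly the source of the residual $\leq 2$ ambiguity in Theorem B.

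The serious issue is your case (ii), and your instinct that it is ``the real heart of the matter'' is correct---but it cannot be closed as you hope, because the diagonal $\Z/2\Z$ genuinely appears. The composite $\Deltabar \to P(\frakp^e) \to \Z/2\Z$ sends $\deltabar_s$ to the class of $\nrd(\delta_s^2+1)=\lambda_{2s}^2=\lambda_s+2$ in $\F_\frakp^\times/\F_\frakp^{\times 2}$ (and $\deltabar_a$ to the class of $\lambda_{2b}^2\lambda_{2c}^2$ when $a=2$); this class is trivial if and only if $\frakp$ splits in $E(\lambda_{2s})$, so the character depends only on $\Frob_\frakp \in \Gal(F/E)$. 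Two non-split primes with the same Frobenius therefore induce the \emph{same} character of $\Deltabar$, and the image in the corresponding product lands in an index-$2$ subgroup. Concretely, for $(a,b,c)=(2,4,6)$ one has $F=\Q(\sqrt{2},\sqrt{3})$ and $E=\Q$, and for $p=11$ and $p=13$ the character on $(\deltabar_a,\deltabar_b,\deltabar_c)$ is $\bigl(\legen{6}{p},\legen{2}{p},\legen{3}{p}\bigr)=(-1,-1,+1)$ in both cases, so $\Deltabar(2,4,6)$ is not dense in $\PGL_2(\F_{11})\times\PGL_2(\F_{13})$ (equivalently: every Atkin--Lehner element has reduced norm in $\{1,2,3,6\}$, whose square classes at $11$ and $13$ coincide). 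Your Goursat framework is thus diagnosing a defect in the statement itself: whenever $F \neq E$ the closure of $\Deltabar$ in $\widehat{P}$ is not the full product but the fibre product over these $\Gal(F/E)$-valued characters, and correspondingly the paper's inference from $\calO(\frakM)_1^\times\calO(\frakN)_1^\times=\calO_1^\times$ to $\Deltabar(\frakM)\Deltabar(\frakN)=\Deltabar$ does not follow, since $\Deltabar$ is a proper subgroup of $\calO_1^\times/\{\pm 1\}$ on which the above quadratic characters are nontrivial constraints. If you repair the statement by replacing $\widehat{P}$ with this fibre product, your Goursat strategy (with the prime-power and case (i) steps filled in as above) is the right way to prove it.
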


Kucharczyk \cite{Kucharcyzk} uses superstrong approximation for thin subgroups of arithmetic groups to prove a version of Theorem C that shows that the closure of the image of $\Deltabar(a,b,c)$ is an open subgroup of $\widehat{P}$, in particular of finite index; our Theorem C is more refined, giving effective control over the closure of the image.

\subsection*{Applications}

The construction and analysis of these curves has several interesting applications.  Combining Theorems A and B, we see that the branched cover $X(a,b,c;\frakp) \to \PP^1$ realizes the group $\PSL_2(\F_\frakp)$ or $\PGL_2(\F_\frakp)$ regularly over the field $M(X,f,G)$, a small extension of a totally real abelian number field.  (See Malle and Matzat \cite{MalleMatzat}, Serre \cite[Chapters 7--8]{TGT}, and Volklein \cite{Volklein} for more information and groups realized regularly by rigidity and other methods.)

Moreover, the branched covers $X(a,b,c;\frakp) \to X(a,b,c)$ have applications in the Diophantine study of generalized Fermat equations.  When $c=\infty$, Darmon \cite{Darmon} has constructed a family of hypergeometric abelian varieties associated to the triangle group $\Deltabar(a,b,c)$.  The analogous construction when $c \neq \infty$ we believe will likewise have important arithmetic applications.  (See also work of Tyszkowska \cite{Tysz}, who studies the fixed points of a particular symmetry of $\PSL_2(\F_p)$-Galois \Belyi\ curves.)

Finally, it is natural to consider applications to the arithmetic theory of elliptic curves.  Every elliptic curve $E$ over $\Q$ is uniformized by a modular curve $X_0(N) \to E$, and the theory of Heegner points govern facets of the arithmetic of $E$: in particular, it controls the rank of $E(\Q)$ when this rank is at most $1$.  By analogy, we are led to consider those elliptic curves over a totally real field that are uniformized by a curve $X(a,b,c;\frakp)$---there is some evidence  \cite{JV:semiarith} that the images of CM points generate subgroups of rank at least $2$.

\subsection*{Organization}

The paper is organized as follows.  In Sections 1--3, we introduce triangle groups, \Belyi\ maps, Galois \Belyi\ curves, and fields of moduli.  In Section 4, we investigate in detail a construction of Takeuchi, later explored by Cohen and Wolfart, which realizes the curves associated to triangle groups as subvarieties of quaternionic Shimura varieties, and from this modular embedding we define congruence subgroups of triangle groups.  We next introduce in Section 5 the theory of weak rigidity which provides the statement of Galois descent we will employ.  In Section 6, we set up the basic theory of $\PSL_2(\F_q)$, and in Section 7 we recall Macbeath's theory of two-generated subgroups of $\SL_2(\F_q)$.  In Section 8, we put the pieces together and prove Theorems A, B, and C.  We conclude in Section 9 with several explicit examples.

The authors would like to thank Henri Darmon, Richard Foote, David Harbater, Hilaf Hasson, Robert Kucharczyk, Jennifer Paulhus, Jeroen Sijsling, and J\"urgen Wolfart for helpful discussions, as well as Noam Elkies for his valuable comments and encouragement.  The second author was supported by an NSF CAREER Award (DMS-1151047).

\section{Triangle groups}

In this section, we review the basic theory of triangle groups.  We refer to Magnus \cite[Chapter II]{Magnus} and Ratcliffe \cite[\S 7.2]{Ratcliffe} for further reading.

Let $a,b,c \in \Z_{\geq 2} \cup \{\infty\}$ satisfy $a \leq b \leq c$.  We say that the triple $(a,b,c)$ is \defi{spherical}, \defi{Euclidean}, or \defi{hyperbolic} according as the quantity
\[ \chi(a,b,c)=1-\frac{1}{a}-\frac{1}{b}-\frac{1}{c} \]
is negative, zero, or positive.  The spherical triples are $(2,3,3)$, $(2,3,4)$, $(2,3,5)$, and $(2,2,c)$ with $c \in \Z_{\geq 2}$.  The Euclidean triples are $(2,2,\infty)$, $(2,4,4)$, $(2,3,6)$, and $(3,3,3)$.  All other triples are hyperbolic.

We associate to a triple $(a,b,c)$ the \defi{extended triangle group} $\Delta=\Delta(a,b,c)$, the group generated by elements $-1,\delta_a,\delta_b,\delta_c$, with $-1$ central in $\Delta$, subject to the relations $(-1)^2=1$ and
\begin{equation}
\delta_a^a=\delta_b^b=\delta_c^c=\delta_a\delta_b\delta_c=-1;
\end{equation}
by convention we let $\delta_\infty^\infty=-1$.  We define the quotient
\[ \Deltabar=\Deltabar(a,b,c)=\Delta(a,b,c)/\{\pm 1\} \]
and call $\Deltabar$ a \defi{triangle group}.  We denote by $\deltabar$ the image of $\delta \in \Delta(a,b,c)$ in $\Deltabar(a,b,c)$.

\begin{rmk} \label{reorderremark}
Reordering generators permits our assumption that $a \leq b \leq c$ without loss of generality.  Indeed, the defining condition $\delta_a \delta_b \delta_c = -1$ is invariant under cyclic permutations so $\Delta(a,b,c) \cong \Delta(b,c,a) \cong \Delta(c,a,b)$, and similarly the map which sends a generator to its inverse gives an isomorphism $\Delta(a,b,c) \cong \Delta(c,b,a)$.  The same is true for the quotients $\overline{\Delta}(a,b,c)$.
\end{rmk}

The triangle groups $\Deltabar(a,b,c)$ with $(a,b,c)$ earn their name from the following geometric interpretation.  Associated to $\Deltabar$ is a triangle $T$ with angles $\pi/a$, $\pi/b$, and $\pi/c$ on the Riemann sphere, the Euclidean plane, or the hyperbolic plane according as the triple is spherical, Euclidean, or hyperbolic, where by convention we let $1/\infty=0$.  (The case $(a,b,c)=(2,2,\infty)$ is admittedly a bit weird; one must understand the term \emph{triangle} generously in this case.)  The group of isometries generated by reflections $\overline{\tau}_a,\overline{\tau}_b,\overline{\tau}_c$ in the three sides of the triangle $T$ is a discrete group with $T$ itself as a fundamental domain.  The subgroup of orientation-preserving isometries is generated by the elements $\deltabar_a=\overline{\tau}_b\overline{\tau}_c$, $\deltabar_b=\overline{\tau}_c\overline{\tau}_a$, and $\deltabar_c=\overline{\tau}_a\overline{\tau}_b$ and these elements generate a group isomorphic to $\Deltabar(a,b,c)$.  A fundamental domain for $\Deltabar(a,b,c)$ is obtained by reflecting the triangle $T$ in one of its sides.  The sides of this fundamental domain are identified by the elements $\deltabar_a$, $\deltabar_b$, and $\deltabar_c$, and consequently the quotient space is a Riemann surface of genus zero.  This surface is compact if and only if $a,b,c \neq \infty$ (i.e., $c \neq \infty$ since $a \leq b \leq c$).  We analogously classify the groups $\Deltabar(a,b,c)$ as spherical, Euclidean, or hyperbolic.  We make the convention $\Z/\infty \Z = \Z$.

\begin{exm}
For all $a,b \geq 2$, $\Deltabar(a,b,\infty)$ is canonically isomorphic to the free product $\Z/a\Z * \Z/b\Z$.  This group is Euclidean when $a = b = 2$
and otherwise hyperbolic.
\begin{enumerate}
\item The group $\Deltabar(2,2,\infty) = \Z/2\Z * \Z/2\Z$
can be geometrically realized as the group of isometries of the Euclidean plane generated by reflections through two distinct, parallel lines.  This yields
the alternate presentation
\[\Deltabar(2,2,\infty)  \cong \langle \sigma,\tau \mid
\sigma^2 = 1, \ \sigma \tau \sigma^{-1} = \tau^{-1} \rangle. \]
The group $\Deltabar(2,2,\infty)$ is sometimes called the \defi{infinite dihedral group}.
\item We have $\Delta(2,3,\infty) \cong \Z/4\Z *_{\Z/2\Z} \Z/6\Z \cong \SL_2(\Z)$.  It follows
that $\Deltabar(2,3,\infty) = \Z/2\Z * \Z/3\Z \cong \PSL_2(\Z)$.
\item The group $\Deltabar(\infty,\infty,\infty) = \Z * \Z$ is free
on two generators.  We have $\Deltabar(\infty,\infty,\infty) \cong \Ker(\PSL_2(\Z) \rightarrow \PSL_2(\Z/2\Z))$.
\item For $n \in \Z_{\geq 2}$, the groups $\Delta(2,n,\infty) \cong \Z/2\Z * \Z/n\Z$ are called \defi{Hecke groups} \cite{Hecke}.
\end{enumerate}
\end{exm}

\begin{exm}
\label{exm:sphericaltriang}
The spherical triangle groups are all finite groups: we have $\Deltabar(2,2,c) \cong D_{2c}$, the dihedral group on $2c$ elements, and
\[ \Deltabar(2,3,3) \cong A_4, \quad  \Deltabar(2,3,4) \cong S_4, \quad \Deltabar(2,3,5) \cong A_5. \]
\end{exm}

We have the exact sequence
\begin{equation} \label{deltabarab}
1 \to [\Deltabar,\Deltabar] \to \Deltabar \to \Deltabar^{\textup{ab}} \to 1
\end{equation}
where $[\Deltabar,\Deltabar]$ denotes the commutator subgroup.  If $c \neq \infty$, then $\Deltabar^{\textup{ab}}=\Deltabar/[\Deltabar,\Deltabar]$ is isomorphic to the quotient of $\Z/a\Z \times \Z/b\Z$ by the cyclic subgroup generated by $(c,c)$; when $c=\infty$, we have $\Deltabar^{\textup{ab}} \cong \Z/a\Z \times \Z/b\Z$.  Thus, the group $\Deltabar$ is perfect (i.e.\ $\Deltabar^{\textup{ab}}=\{1\}$) if and only if $a,b,c$ are relatively prime in pairs.  We have $[\Deltabar,\Deltabar] \cong \Z$ for $(a,b,c)=(2,\infty,\infty)$, whereas for the other
Euclidean triples we have $[\Deltabar,\Deltabar] \cong \Z^2$ \cite[\S II.4]{Magnus}.  In particular, the Euclidean triangle groups are infinite and nonabelian, but solvable.

From now on, suppose $(a,b,c)$ is hyperbolic.  Then by the previous paragraph we can realize $\Deltabar=\Deltabar(a,b,c) \hookrightarrow \PSL_2(\R)$ as a Fuchsian group, a discrete subgroup of orientation-preserving isometries of the upper half-plane $\calH$.   Let $\calH^{(*)}$ denote $\calH$ together
with the cusps of $\Deltabar(a,b,c)$: this is the number of instances of $\infty$ among $a,b,c$. We write $X(a,b,c)=\Deltabar(a,b,c) \backslash \calH^{(*)} \cong \PP_\C^1$ for the quotient space.
 %The embedding $\Deltabar(a,b,c) \hookrightarrow \PSL_2(\R)$ is %unique up to conjugation in $\PSL_2(\R)$ since any two %hyperbolic triangles with the same angles are congruent.  %\plc{I don't buy this in this level of generality: %$\Delta(\infty,\infty,\infty)$ is isomorphic to a Fuchsian %group of genus one
%with one cusp.  And I didn't understand the argument.  I will %look up a suitable reference for ``rigidity of triangle %groups''.}  \jv{No, $X(\infty,\infty,\infty) = X(2) \cong \PP^1 %\setminus \{0,1,\infty\}$, it is not genus one.  This is the %``angle-angle-angle'' theorem in hyperbolic geometry.}

We lift this embedding to $\SL_2(\R)$ as follows.  Suppose that $b < \infty$: this excludes the cases $(a,\infty,\infty)$ and $(\infty,\infty,\infty)$, whose associated groups are commensurable with $\SL_2(\Z)$ and can be analyzed after making appropriate modifications.  Then Takeuchi \cite[Proposition 1]{Takeuchi} has shown that there exists an embedding
\[ \Delta(a,b,c) \hookrightarrow \SL_2(\R) \]
which is unique up to conjugacy in $\SL_2(\R)$.  In fact, this embedding can be made explicit as follows \cite{Petersson}.  As in the introduction, for $s \in \Z_{\geq 2}$, let $\zeta_s=\exp(2\pi i/s)$ and
\begin{equation} \label{lambdak}
\lambda_s=\zeta_{s}+\frac{1}{\zeta_{s}}=2\cos\left(\frac{2\pi}{s}\right) \text{\ \ and\ \ }\mu_s = 2\sin\left(\frac{2\pi}{s}\right) = -i\left(\zeta_{s}-\frac{1}{\zeta_{s}}\right)
\end{equation}
where by convention $\zeta_\infty=1$, $\lambda_\infty=2$, and $\mu_\infty=0$.

Then we have a map
\begin{equation} \label{explicitmap}
\begin{aligned}
\Delta(a,b,c) &\hookrightarrow \SL_2(\R) \\
\delta_a &\mapsto \frac{1}{2} \begin{pmatrix} \lambda_{2a} & \mu_{2a} \\ -\mu_{2a} & \lambda_{2a} \end{pmatrix} % = \begin{pmatrix} \cos(\pi/a) & \sin(\pi/a) \\ -\sin(\pi/a) & \cos(\pi/a) \end{pmatrix} \\
\\ \delta_b &\mapsto \frac{1}{2}
\begin{pmatrix} \lambda_{2b} & t\mu_{2b} \\ -\mu_{2b}/t & \lambda_{2b} \end{pmatrix} % = \begin{pmatrix} \cos(\pi/b) & t\sin(\pi/b) \\ -(1/t)\sin(\pi/b) & \cos(\pi/b) \end{pmatrix}
\end{aligned}
\end{equation}
where
\[ t+1/t=2\frac{\lambda_{2a}\lambda_{2b}+2\lambda_{2c}}{\mu_{2a}\mu_{2b}}. \]
The embedding (\ref{explicitmap}) then also gives rise to an explicit embedding $\Deltabar(a,b,c) \hookrightarrow \PSL_2(\R)$.

A triangle group $\Deltabar=\Deltabar(a,b,c)$ is \defi{maximal} (we also say that the triple $(a,b,c)$ is \defi{maximal}) if $\Deltabar$ cannot be properly embedded in any Fuchsian group.  By a result of Singerman \cite{Singerman} (see also Greenberg \cite[Theorem 3B]{Greenberg}), any Fuchsian group containing
 $\Deltabar(a,b,c)$ is itself a triangle group.  All inclusion relations between triangle groups can be generated (by concatenation) from the relations \cite[(2)]{GirondoWolfart}
\begin{equation} \label{fam2}
\begin{array}{cc}
\Deltabar(2,7,7) \leq_{9} \Deltabar(2,3,7)  \quad & \Deltabar(3,8,8) \leq_{10} \Deltabar(2,3,8) \\
\Deltabar(4,4,5) \leq_{6} \Deltabar(2,4,5) \quad & \Deltabar(3,3,7) \leq_{8} \Deltabar(2,3,7)
\end{array}
\end{equation}
or one of the families
\begin{equation} \label{fam1}
\begin{array}{cc}
\Deltabar(a,a,a) \trianglelefteq_3 \Deltabar(3,3,a) \quad & \Deltabar(a,a,c) \trianglelefteq_2 \Deltabar(2,a,2c) \\
\Deltabar(2,b,2b) \leq_3 \Deltabar(2,3,2b) \quad & \Deltabar(3,b,3b) \leq_4 \Deltabar(2,3,3b).
\end{array}
\end{equation}
Here $H \leq_n G$ (resp. $H \trianglelefteq_n G$) means that
$H$ is an index $n$ subgroup of $G$ (resp. an index $n$ normal
subgroup of $G$).  Moreover, to avoid tedious proliferation of cases, we have in (\ref{fam1}) removed our assumption that $a \leq b \leq c$.  It follows from \eqref{fam2}--\eqref{fam1} that $\Deltabar(a,b,c)$ is maximal if and only if $(a,b,c)$ is not of the form
\[ (a,a,c), (a,b,b), (2,b,2b), \text{ or } (3,b,3b) \]
with again $a,b,c \in \Z_{\geq 2} \cup \{\infty\}$ not necessarily in increasing order.

A Fuchsian group $\Gamma$ is \defi{arithmetic} \cite{Borelarith} if there exists a quaternion algebra $B$ over a totally real field $F$ that is unramified at precisely one real place of $F$ such that $\Gamma$ is commensurable with the image of the units of reduced norm $1$ in an order $\calO \subseteq B$.  Takeuchi \cite[Theorem 3]{Takeuchi} has enumerated the arithmetic triangle groups $\Deltabar(a,b,c)$: there are $85$ of them, falling into $19$ commensurability classes \cite[Table (1)]{Takeuchi2}.

\section{Galois \Belyi\ maps}

In this section, we discuss \Belyi\ maps and Galois \Belyi\ curves and we relate these curves to those uniformized by subgroups of triangle groups.

A \defi{branched cover} of curves over a field $k$ is a finite morphism of curves $f:X \to Y$ defined over $k$.  A \defi{\Belyi\ map} is a branched cover $f:X \to \PP^1$ over $\C$ which is unramified away from $\{0,1,\infty\}$.
%  over at most three points of $\PP^1$.  Since $\PGL_2(\C) = \Aut \PP^1$ acts triply transitively, we may take these three points to be $\{0,1,\infty\}$ and it is often---but not always---convenient to do so.  \jv{This is a potentially confusing point; if you don't take the collection of points to be defined over $\Q$, then its field of definition will affect the field of definition of any cover branched over it.  The extra convenience is absolutely true (you can sometimes squeeze out extra descent), but I think it is a confounding remark here and should be removed, and we should just take the points to be $\{0,1,\infty\}$.}
An
\defi{isomorphism} of branched covers between $f$ and $f'$ is an isomorphism $h:X \xrightarrow{\sim} X'$ that respects the covers, i.e., such that $f=f'\circ h$.

\begin{rmk}
Let $f: X \rightarrow \PP^1$ be a morphism of degree $d > 1$.  By
Riemann-Hurwitz, $f$ is ramified over at least two points of $\PP^1$, and if
$f$ is ramified over exactly two points then $X \cong \PP^1$.  In the latter
case, after identifying $X$ with $\PP^1$ we may adjust the target by a linear
fractional transformation so as to have $f(z) = z^d$.
% We say a \Belyi\ map
% is \defi{proper} if it has exactly three ramification points: this is the
% interesting case.  \jv{We never use this term; why introduce it?}
\end{rmk}

A branched cover that is a Galois (with Galois group $G$), i.e.\ a covering whose corresponding extension of function fields is Galois, is called a \defi{Galois branched cover}; if such a branched cover is further equipped with an isomorphism $i:G \xrightarrow{\sim} \Gal(f)=\Aut(X,f) \subseteq \Aut(X)$, it is called a \defi{$G$-Galois branched cover}.  Note the distinction between the two!  A curve $X$ that possesses a Galois \Belyi\ map is called a \defi{Galois \Belyi\ curve}.  An \defi{isomorphism} of $G$-Galois branched covers over $k$ is an isomorphism $h$ of branched covers that identifies $i$ with $i'$, i.e.,
\begin{center}
$h(i(g)x)=i'(g)h(x)$ for all $g \in G$ and $x \in X(\overline{k})$
\end{center}
where $\overline{k}$ is an algebraic closure of $k$.  (This distinction may seem irrelevant at first, but it is important if one wants to study properties not just the cover but also the Galois group of the branched cover.)  For a Galois branched cover $f: X \rightarrow \PP^1$, the ramification index of $P \in X(\C)$ depends only on $f(P)$, so we record these indices as a triple $(a_1,\dots,a_n)$ of integers $1 < a_1 \leq \dots \leq a_n$ and say that $(a_1,\dots,a_n)$ is the \defi{ramification type} of $f$.

\begin{rmk} \label{rmk:XautX}
If $X$ has genus at least $2$ and $X \to X/G$ is a $G$-Galois \Belyi\ map, then the quotient $X \to X/\!\Aut(X)$ is a $\Aut(X)$-Galois \Belyi\ map.
\end{rmk}

\begin{exm} \label{Wolfartgenus0}
The map
\begin{align*}
f:\PP^1 &\to \PP^1 \\
f(t) &= \frac{t^2(t+3)}{4}=1+\frac{(t-1)(t+2)^2}{4}
\end{align*}
is a \Belyi\ map, a branched cover ramified only over $0,1,\infty$, with ramification indices $(2,2,3)$.  In particular, $\PP^1$ is a Galois \Belyi\ curve.  The Galois closure of $f$ is a Galois \Belyi\ map $\PP^1 \to \PP^1$ with Galois group $S_3$ corresponding to the simplest spherical triangle group $\Deltabar(2,2,3)$: it is given by
\[ f(t)=\frac{27t^2(t-1)^2}{4(t^2-t+1)^3} \quad \text{with} \quad
 f(t)-1 = -\frac{(t-2)^2(2t-1)^2(t+1)^2}{4(t^2-t+1)^3}. \]
 It becomes an $S_3$-Galois \Belyi\ map when it is equipped with the isomorphism
\begin{align*}
S_3 &\xrightarrow{\sim} \Gal(f) \leq \Aut(\PP^1) \cong \PGL_2(\C) \\
(1\ 2) &\mapsto (t \mapsto 1-t) \leftrightarrow \begin{pmatrix} -1 & 1 \\ 0 & 1 \end{pmatrix} \\
(1\ 2\ 3) &\mapsto \left(t \mapsto \frac{1}{1-t}\right) \leftrightarrow \begin{pmatrix} 0 & 1 \\ -1 & 1 \end{pmatrix}.
\end{align*}
All examples of Galois \Belyi\ maps $\PP^1 \to \PP^1$ arise in this way from the spherical triangle groups, as in Example \ref{exm:sphericaltriang}.
\end{exm}

% \jv{I feel like we need to introduce the genus 1 examples first, and we might as well be explicit about it.  This way we can tweak exactly what we want to say and not feel constrained that it appears in the statement of a lemma.}

\begin{exm} \label{Wolfartgenus1}
We now consider Galois \Belyi\ maps $E \to \PP^1$ where $E$ is a curve of genus $1$ over $\C$.  There is no loss in assuming that $E$ has the structure of elliptic curve with neutral element $\infty \in E(\C)$.  The elliptic curves with extra automorphisms present candidates for such maps.

The curve $E:y^2=x^3-x$ with $j(E)=1728$ has $G=\Aut(E,\infty)$ cyclic of order $4$, and the quotient $x^2:E \to E/G \cong \PP^1$ yields a Galois \Belyi\ map of degree $4$ with ramification type $(2,4,4)$ by a direct computation.  This map as a $G$-Galois \Belyi\ map is minimally defined over $\Q(\sqrt{-1})$; the Belyi map itself is defined over $\Q$.

Next we consider the curve with $j(E)=0$ with $G=\Aut(E,\infty)$ cyclic of order $6$, from which we obtain two Galois \Belyi\ maps.  The first map is obtained by writing $E:y^2=x^3-1$ and taking the map $x^3:E \to E/G \cong \PP^1$, a Galois \Belyi\ map of degree $6$ with ramification type $(2,3,6)$.  The second is obtained by writing instead $E:y^2-y=x^3$ (isomorphically) and the unique subgroup $H < G$ of order $3$, corresponding to the map $y:E \to E/H \cong \PP^1$ with ramification type $(3,3,3)$.  These maps are minimally defined over $\Q(\sqrt{-3})$ as Galois \Belyi\ maps.  Indeed, the inclusions \eqref{fam1} imply an inclusion $\Deltabar(3,3,3) \trianglelefteq_2 \Deltabar(2,3,6)$, so the former is the composition of the latter together with the squaring map.

One obtains further Galois \Belyi\ maps by precomposing these with an isogeny $E \to E$.
\end{exm}

\begin{lem} \label{lem:Wolfartgenus1}
Up to isomorphism, the only Galois \Belyi\ maps $E \to \PP^1$ with $E$ a genus $1$ curve over $\C$ are of the form
\[ E \xrightarrow{\phi} E \xrightarrow{f} \PP^1 \]
where $\phi$ is an isogeny and $f$ is one of the three Galois \Belyi\ maps in Example \textup{\ref{Wolfartgenus1}}.  In particular, the only Galois \Belyi\ curves $E$ of genus $1$ have $j(E)=0,1728$.
\end{lem}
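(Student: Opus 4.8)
The plan is to classify Galois Belyĭ maps $f : E \to \PP^1$ with $E$ of genus $1$ by pulling back the uniformization picture from Section 1. If $f : E \to \PP^1$ is a Galois Belyĭ map with group $G$, then $E = \Gammabar \backslash \calH$ for a finite-index normal subgroup $\Gammabar \trianglelefteq \Deltabar(a,b,c)$ with $\Deltabar(a,b,c)/\Gammabar \cong G$, where $(a,b,c)$ is the ramification type. Since $E$ has genus $1$, $\Gammabar$ is a surface group of genus $1$, hence abelian (isomorphic to $\Z^2$), so in particular $\Gammabar$ is amenable and torsion-free. The first key step is therefore to pin down which hyperbolic triples $(a,b,c)$ admit a finite-index normal subgroup $\Gammabar$ with genus-$1$ quotient; by Riemann–Hurwitz applied to $E = \Gammabar \backslash \calH \to \Deltabar \backslash \calH = \PP^1$, writing $N = [\Deltabar : \Gammabar]$, we get
\[ 0 = -2 + N\left(1 - \tfrac1a - \tfrac1b - \tfrac1c\right) + \left(\tfrac{N}{a} + \tfrac{N}{b} + \tfrac{N}{c}\right)\text{-type corrections}, \]
more precisely $2 \cdot 1 - 2 = N(2 \cdot 0 - 2) + \sum (\text{ram.\ contributions})$, which collapses to $\chi(a,b,c) = 0$ when the cover is ramified exactly over the three points with full ramification index — i.e. the triple must be \emph{Euclidean}, not hyperbolic. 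So the honest statement is that $(a,b,c) \in \{(2,3,6),(3,3,3),(2,4,4),(2,2,\infty)\}$, and only the first three are relevant since a genus-$1$ \emph{curve} (compact) forces $c \neq \infty$.

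Next I would go from the triple to the map. For each of $(2,3,6)$, $(3,3,3)$, $(2,4,4)$, the Euclidean triangle group $\Deltabar(a,b,c)$ sits in an exact sequence $1 \to \Z^2 \to \Deltabar \to \Z/m\Z \to 1$ where $\Z/m\Z$ is the image of $\Deltabar$ in its abelianization acting on the translation lattice (with $m = 6, 3, 4$ respectively), as recorded in the discussion around \eqref{deltabarab} and \cite[\S II.4]{Magnus}. Any finite-index normal $\Gammabar$ with $\Gammabar \backslash \calH$ of genus $1$ must be torsion-free, hence contained in the translation lattice $\Z^2$ (a nontrivial finite cyclic factor would force a cone point, raising the ramification type above $(a,b,c)$ or changing genus); so $\Gammabar \leq \Z^2$ is a finite-index sublattice, $E = \Gammabar \backslash \calH$ is the elliptic curve $\C/\Gammabar$, and the induced $G = \Deltabar/\Gammabar$ still contains the cyclic group $\Z/m\Z$ acting on $E$ with a fixed point. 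But a genus-$1$ curve admitting an automorphism of order $4$ (resp.\ $3$, $6$) with a fixed point must have $j = 1728$ (resp.\ $j = 0$, $j = 0$) — this is the classical classification of elliptic curves with extra automorphisms. That forces $E$ into the finitely many lattices homothetic to $\Z[i]$ or $\Z[\zeta_3]$, and since $\Gammabar \leq \Z^2$ is unique up to the automorphisms coming from $\Deltabar$, the map $E \to \PP^1$ is, up to the choice of sublattice $\Gammabar$, exactly the three maps listed in Example \ref{Wolfartgenus1}. Choosing a smaller $\Gammabar$ (a sublattice of the "minimal" one) precisely corresponds to precomposing with an isogeny $E \to E$, which gives the stated normal form $E \xrightarrow{\phi} E \xrightarrow{f} \PP^1$.

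The main obstacle I expect is the bookkeeping in the middle step: verifying that the torsion-free normal subgroups of a Euclidean triangle group are \emph{exactly} the finite-index sublattices of the translation subgroup, and that no "exotic" normal $\Gammabar$ produces genus $1$ with a different ramification type — this requires knowing the torsion elements of $\Deltabar(a,b,c)$ are all conjugate into the cyclic groups $\la \deltabar_a \ra, \la \deltabar_b \ra, \la \deltabar_c \ra$ (the standard fact that a finite-order element of a Fuchsian triangle group fixes a cone point), and then a short Riemann–Hurwitz argument ruling out genus $1$ for any quotient whose ramification type is a \emph{proper} sub-triple. A secondary subtlety is the identification of "up to isomorphism of Belyĭ maps" with "up to the action of $\Aut(\Deltabar)$ and isogeny" — one must check that two choices of $\Gammabar$ give isomorphic Belyĭ maps iff they differ by an element of $\Aut(\Deltabar(a,b,c))$, which is where the "precompose with an isogeny" clause absorbs the remaining freedom. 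Once these are in place, reading off $j(E) \in \{0, 1728\}$ is immediate.
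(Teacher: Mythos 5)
Your proposal is correct in outline, but it takes a genuinely different route from the paper's. The paper argues directly on the elliptic curve: writing $\Aut E = E(\C) \rtimes \Aut(E,\infty)$, any finite $G \leq \Aut E$ has a normal subgroup $G' = G \cap E(\C)$ of translations with $G/G' \subseteq \Aut(E,\infty)$, so $E/G \cong E'/(G/G')$ with $E'=E/G'$ an isogenous elliptic curve; if $j(E) \neq 0,1728$ then $\Aut(E,\infty)=\{\pm 1\}$, and either $E/G$ has genus $1$ or the quotient map is the hyperelliptic $x$-map, ramified over \emph{four} points---so no Galois \Belyi\ map exists. Your argument instead runs through uniformization: Riemann--Hurwitz forces the ramification triple to be Euclidean, the Euclidean triangle groups are crystallographic with translation lattice $\Z^2$ and point group $\Z/m\Z$ (with $m=4,6,3$), a torsion-free finite-index normal subgroup must be a point-group-stable sublattice, and the surviving order-$m$ automorphism with a fixed point forces $j(E) \in \{0,1728\}$. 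Both routes are complete modulo the same routine finishing step (identifying the residual quotient with one of the three maps of Example \ref{Wolfartgenus1}). What the paper's argument buys is brevity and self-containedness: it needs nothing about triangle groups. What yours buys is consistency with the uniformization picture used throughout the paper---it is the Euclidean analogue of Proposition \ref{wolfartequiv}(iii)---at the cost of invoking the crystallographic structure of the wallpaper groups and the fact that all torsion in $\Deltabar(a,b,c)$ is conjugate into $\la \deltabar_a\ra, \la\deltabar_b\ra,\la\deltabar_c\ra$. Two small slips worth fixing: for a Euclidean triple the group acts on $\C$, not $\calH$; and the point group $\Z/m\Z$ is the quotient by the translation subgroup, which is not the abelianization (for $(3,3,3)$ the abelianization is $(\Z/3\Z)^2$ while the point group is $\Z/3\Z$).
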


\begin{proof}
Let $E \to \PP^1$ be a Galois \Belyi\ map, where without loss of generality we may assume $E:y^2=f(x)$ is an elliptic curve in Weierstrass form with neutral element $\infty$.  We claim that $j(E)=0,1728$.  We always have $\Aut E = E(\C) \rtimes \Aut (E,\infty)$.  If $G < \Aut E$ is a finite subgroup, then $G' = G \cap E(\C) \trianglelefteq G$ and $G/G' \subseteq \Aut (E,\infty)$, so $E' = E/G'$ is an elliptic curve and $E/G \cong E/(G/G')$.  However, if $j(E) \neq 0,1728$, then $\Aut (E,\infty) = \{\pm 1\}$, so either $G = G'$ and $E/G$ is an elliptic curve, or $G=\pm G'$ and the map $E \rightarrow E/G' \xrightarrow{x} E/G \cong \PP^1$ is ramified at four points, the roots of $f(x)$ and $\infty$.
\end{proof}

In view of Examples \ref{Wolfartgenus0} and \ref{Wolfartgenus1} and Lemma \ref{lem:Wolfartgenus1}, from now on we may restrict our attention to Galois \Belyi\ maps $f:X \to \PP^1$ with $X$ of genus $g \geq 2$.  These curves can be characterized in several equivalent ways.

\begin{prop}[Wolfart {\cite{Wolfart97,Wolfart00}}] \label{wolfartequiv}
Let $X$ be a compact Riemann surface of genus $g \geq 2$.  Then the following are equivalent.
\begin{enumroman}
\item $X$ is a Galois \Belyi\ curve;
\item The map $X \to X/\!\Aut(X)$ is a \Belyi\ map;
\item There exists a finite index, torsion-free normal subgroup $\Gamma \trianglelefteq \Deltabar(a,b,c)$ with $a,b,c \in \Z_{\geq 2}$ and a complex uniformization $\Gamma \backslash \calH \xrightarrow{\sim} X$; and
\item There exists an open neighborhood $U$ of $[X]$ (with respect to the complex analytic topology) in the moduli space $\calM_g(\C)$ of curves of genus $g$ such that $\#\Aut(X) > \#\Aut(Y)$ for all $[Y] \in U \setminus \{[X]\}$.
\end{enumroman}
\end{prop}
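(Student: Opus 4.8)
The plan is to prove the cycle (ii)~$\Rightarrow$~(i)~$\Rightarrow$~(iii)~$\Rightarrow$~(ii), which yields (i)~$\Leftrightarrow$~(ii)~$\Leftrightarrow$~(iii), and then to establish (ii)~$\Leftrightarrow$~(iv) separately. The implication (ii)~$\Rightarrow$~(i) is immediate: since $g\geq 2$ the group $\Aut(X)$ is finite, so $X\to X/\!\Aut(X)$ is a Galois branched cover with group $\Aut(X)$, and if it is moreover a \Belyi\ map then $X$ is a Galois \Belyi\ curve by definition.

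For (i)~$\Rightarrow$~(iii) I would use orbifold uniformization. Let $f\colon X\to X/G\cong\PP^1$ be a Galois \Belyi\ map with $G\leq\Aut(X)$; since $g\geq 2$ a degree-$>1$ map to $\PP^1$ branched over only two points would have source $\PP^1$ (as noted at the start of this section), so $f$ is branched over exactly three points, with ramification type $(a,b,c)$, and $a,b,c\in\Z_{\geq 2}$ because $X$ is compact. Giving the three branch points the cone orders $a,b,c$, the cover $f$ is orbifold-unramified, so Riemann--Hurwitz reads $2g-2=\#G\cdot\chi(a,b,c)$ with $\chi(a,b,c)=1-1/a-1/b-1/c$; hence $\chi(a,b,c)>0$, the triple is hyperbolic, and $\Deltabar(a,b,c)\subset\PSL_2(\R)$ acts on $\calH$ with $\Deltabar(a,b,c)\backslash\calH\cong\PP^1$ an orbifold of type $(a,b,c)$. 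As an orbifold cover of this base, $f$ is classified by the subgroup $\Gamma=\ker(\Deltabar(a,b,c)\twoheadrightarrow G)$, which is normal of finite index with $\Gamma\backslash\calH\cong X$, and torsion-free since the common ramification index of $f$ above each branch point equals the corresponding cone order, so the cone points lift to smooth points and $\Gamma$ has no elliptic element.

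For (iii)~$\Rightarrow$~(ii), let $\Gamma\trianglelefteq\Deltabar(a,b,c)$ be torsion-free of finite index with $X=\Gamma\backslash\calH$; then $(a,b,c)$ is necessarily hyperbolic and $\Gamma$ is cocompact. Since $\Gamma$ is torsion-free and $g\geq 2$, every automorphism of $X$ lifts to a biholomorphism of the universal cover $\calH$ normalizing $\Gamma$, so $\Aut(X)\cong N(\Gamma)/\Gamma$ where $N(\Gamma)$ is the normalizer of $\Gamma$ in $\PSL_2(\R)$; in particular $N(\Gamma)$ is discrete, contains the cocompact triangle group $\Deltabar(a,b,c)$, and is itself cocompact. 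By Singerman's theorem that a Fuchsian overgroup of a triangle group is again a triangle group (recalled in Section~2), $N(\Gamma)=\Deltabar(a',b',c')$ with $a',b',c'\in\Z_{\geq 2}$. Therefore $X/\!\Aut(X)=N(\Gamma)\backslash\calH\cong\PP^1$, branched over exactly its three cone points: a \Belyi\ map.

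The remaining equivalence (ii)~$\Leftrightarrow$~(iv) is where I expect the real work to be, and I would import the structure theory of Hurwitz (special) loci in $\calM_g(\C)$: for a finite group $H$, necessarily of order $\leq 84(g-1)$, and a topological type of faithful $H$-action with quotient signature $(g_0;m_1,\dots,m_n)$, the locus of $[Y]$ carrying such an action is a closed subvariety of $\calM_g(\C)$ of dimension $3g_0-3+n$, and one also needs the semicontinuity fact that under specialization of smooth curves the automorphism group can only jump up. Note that (ii) holds exactly when the signature of $\Aut(X)\curvearrowright X$ has the form $(0;m_1,m_2,m_3)$: a genus-$0$ quotient forces $n\geq 3$ by Riemann--Hurwitz (using $g\geq 2$), while a \Belyi\ map forces $n\leq 3$. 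If (ii) holds, the Hurwitz locus through $[X]$ for the action of its own automorphism group has dimension $3\cdot 0-3+3=0$; were $[X]$ not a strict local maximum of $\#\!\Aut$, I would choose distinct $[Y_i]\to[X]$ with $\#\Aut(Y_i)\geq\#\Aut(X)$, pass to a subsequence along which $\Aut(Y_i)\cong H$ with a fixed action type, use closedness and the semicontinuity above to place $[X]$ in that locus so that $H$ acts on $X$ --- whence $H=\Aut(X)$ and the type is that of $\Aut(X)\curvearrowright X$ --- and contradict $0$-dimensionality. Conversely, if the signature of $\Aut(X)\curvearrowright X$ is $(g_0;m_1,\dots,m_n)$ with $(g_0,n)\neq(0,3)$, then $3g_0-3+n>0$, so that Hurwitz locus is positive-dimensional and every $[Y]$ on it satisfies $\#\Aut(Y)\geq\#\Aut(X)$, so $[X]$ is not a strict local maximum. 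The main obstacle is thus precisely this input about the geometry of $\calM_g$ --- closedness of Hurwitz loci, their expected dimension $3g_0-3+n$, and the semicontinuity of the automorphism number --- which is classical but somewhat delicate; by contrast the Fuchsian-group side rests on the single external fact, already available from Section~2, that a Fuchsian overgroup of a triangle group is again a triangle group.
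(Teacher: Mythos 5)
The paper gives no proof of this proposition at all: it is stated with a citation to Wolfart \cite{Wolfart97,Wolfart00}, and the surrounding text only supplies the ingredients (the Riemann--Hurwitz computation \eqref{Wolfartprops}, the identification $\Aut(X)=N(\Gammabar)/\Gammabar$, and Singerman's theorem that Fuchsian overgroups of triangle groups are triangle groups). So there is nothing in the paper to compare against line by line; your write-up is a reconstruction of the standard argument, and it is correct. The cycle (ii)~$\Rightarrow$~(i)~$\Rightarrow$~(iii)~$\Rightarrow$~(ii) is handled exactly as one should: orbifold covering theory plus the observation that exact ramification $(a,b,c)$ forces the kernel of $\Deltabar(a,b,c)\twoheadrightarrow G$ to be torsion-free, and conversely Singerman/Greenberg (available from Section~2 of the paper) to see that $N(\Gamma)$ is again a cocompact triangle group, so the quotient by $\Aut(X)$ is a three-point cover. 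For (iv) you correctly isolate the genuinely nontrivial input, namely the structure of the special (Hurwitz) loci in $\calM_g(\C)$: closedness, dimension $3g_0-3+n$, and the finiteness of the relevant group orders and topological action types needed to pass to a subsequence. Your formulation is the $\calM_g$-side avatar of what Wolfart actually uses, which is the Teichm\"uller/Fuchsian statement that the deformation space of a cocompact Fuchsian group of signature $(g_0;m_1,\dots,m_n)$ has complex dimension $3g_0-3+n$, so that triangle groups are precisely the rigid ones and $T_g^{H}$ projects to a locus with no isolated points when that dimension is positive; the two are equivalent, and your version makes the "strict local maximum of $\#\Aut$" characterization in (iv) particularly transparent. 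The one place to be slightly careful, which you do handle, is that in the limiting argument for (ii)~$\Rightarrow$~(iv) the limiting $H$-action on $X$ must be identified with the full $\Aut(X)$-action (via $\#H\ge\#\Aut(X)$ and $H\hookrightarrow\Aut(X)$) before one can conclude its signature is $(0;a,b,c)$ and invoke zero-dimensionality.
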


\begin{rmk}
Proposition \ref{wolfartequiv} implies that Riemann surfaces uniformized by  subgroups of non-cocompact hyerperbolic triangle groups are also uniformized by subgroups of cocompact hyperbolic triangle groups.  More precisely: let $a',b' \in \Z_{\geq 2} \cup \{\infty\}$, $(a',b') \neq (2,2)$, and let $\Gamma' \subset \Deltabar(a',b',\infty)$ be a finite index subgroup (\emph{not} necessarily torsionfree).   Then $\Gamma' \backslash \calH^{(*)} \rightarrow
\Deltabar(a',b',\infty) \backslash \calH^{(*)}$ is a Galois \Belyi\ map, so by
Proposition \ref{wolfartequiv} there are $a,b,c \in \Z_{\geq 2}$ and a finite index, normal torsionfree subgroup $\Gamma \subset \Deltabar(a,b,c)$ such that
$\Gamma' \backslash \calH^{(*)} \cong \Gamma \backslash \calH$.  The case of
$\PSL_2(\F_q)$-Galois \Belyi\ curves uniformized by subgroups of Hecke triangle groups is treated in detail by Schmidt and Smith \cite[Prop. 4]{SchmidtSmith}.
\end{rmk}

By the Riemann-Hurwitz formula, if $X$ is a $G$-Galois \Belyi\ curve of type $(a,b,c)$, then $X$ has genus
\begin{equation} \label{Wolfartprops}
g(X)=1+\frac{\#G}{2}\left(1-\frac{1}{a}-\frac{1}{b}-\frac{1}{c}\right) = 1 - \frac{\#G}{2}\chi(a,b,c).
\end{equation}

\begin{rmk} \label{Hurwitz}
The function of $\#G$ in (\ref{Wolfartprops}) is maximized when $(a,b,c)=(2,3,7)$.  Combining this with Proposition \ref{wolfartequiv}(iv) we recover the Hurwitz bound
\[ \#\Aut(X) \leq 84(g(X)-1). \]
\end{rmk}

\begin{rmk} \label{onlyfinGaloisBelyi}
There are only finitely many Galois \Belyi\ curves of any given genus $g$.  By the Hurwitz bound (\ref{Hurwitz}), we can bound $\#G$ given $g \geq 2$, and for
fixed $g$ and $\#G$ there are only finitely many triples $(a,b,c)$ satisfying
(\ref{Wolfartprops}).  Each $\Deltabar(a,b,c)$ is finitely generated so has
only finitely many subgroups of index $\# G$.  From this, one can extract an explicit upper bound; using a more refined approach, Schlage-Puchta and Wolfart \cite[Thm. 1]{SchPuchWolf} showed that the number of  isomorphism classes of Galois \Belyi\ curves of genus at most $g$ grows like $g^{\log g}$.
\end{rmk}

\begin{rmk}
Wolfart \cite{Wolfart00} gives a complete list of all Galois \Belyi\ curves of genus $g=2,3,4$.  Further examples of Galois \Belyi\ curves can be found in the work of Shabat and Voevodsky \cite{SV}. See Table \ref{tablelowgenus} for the determination of all $\PSL_2(\F_q)$-Galois \Belyi\ curves with genus $g \leq 24$.
\end{rmk}

\begin{exm} \label{Galoisclosure}
Let $f:X \to \PP^1$ be a \Belyi\ map and let $g:Y \to \PP^1$ be its Galois closure.  Then $g$ is also a \Belyi\ map and hence $Y$ is a Galois \Belyi\ curve.  Note however that the genus of $Y$ may be much larger than the genus of $X$!
\end{exm}

Condition Proposition \ref{wolfartequiv}(iii) leads us to consider curves arising from finite index normal subgroups of the hyperbolic triangle groups $\Deltabar(a,b,c)$.  If $\Gammabar \subseteq \PSL_2(\R)$ is a Fuchsian group, write $X(\Gammabar)=\Gammabar \setminus \calH^{(*)}$.  If $X$ is a compact Riemann surface of genus $g \geq 2$ with uniformizing subgroup $\Gammabar \subseteq \PSL_2(\R)$, so that $X=X(\Gammabar)$, then $\Aut(X)=N(\Gammabar)/\Gammabar$, where $N(\Gammabar)$ is the normalizer of $\Gammabar$ in $\PSL_2(\R)$.  Moreover, the quotient $X \to X/\!\Aut(X)$, obtained from the map $X(\Gammabar) \to X(N(\Gammabar))$, is a Galois cover with Galois group $\Aut(X)$.
By the results of Section 1, if $\Gammabar \subseteq \Deltabar(a,b,c)$ is a finite index normal subgroup then $\Aut(X(\Gammabar))$ is of the form $\Deltabar'/\Gammabar$ with an inclusion $\Deltabar \subseteq \Deltabar'$ as in (\ref{fam2})--(\ref{fam1}); if $\Deltabar$ is maximal, then we have
\begin{equation} \label{ifmaxdelta}
\Aut(X(\Gammabar)) \cong \Deltabar(a,b,c)/\Gammabar.
\end{equation}

\section{Fields of moduli}

In this section, we briefly review the theory of fields of moduli and fields of definition.  See Coombes and Harbater \cite{CoombesHarbater} and K\"ock \cite{Kock} for more detail.

The \defi{field of moduli} $M(X)$ of a curve $X$ over $\C$ is the fixed field of the group
\[ \{\sigma \in \Aut(X) : X^{\sigma} \cong X\}. \]
In a similar way, we define the fields of moduli $M(X,f)$ of a \Belyi\ map $f:X \to \PP^1$ and $M(X,f,G)$ of a $G$-Galois \Belyi\ map.

Owing to a lack of rigidity, not every curve can be defined over its field of moduli.  However, in our situation we have the following lemma.

\begin{lem} \label{canbedefined}
Let $f:X \to \PP^1$ be a Galois \Belyi\ map.  Then $f$ is defined over its field of moduli $M(X,f)$.
More generally, let $X$ be a Galois \Belyi\ curve.  Then $X$ is defined over its field of moduli $M(X)$, and $M(X)=M(X,f)$ where $f:X \to X/\!\Aut(X) \cong \PP^1$.
\end{lem}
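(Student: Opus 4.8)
The plan is to reduce the statement to a standard descent criterion for covers and then exploit the rigidity afforded by the triangle group presentation. First I would recall Weil's descent criterion (or the cocycle formulation in Coombes--Harbater \cite{CoombesHarbater}): a variety $X/\C$ with field of moduli $M$ descends to $M$ provided one can choose, for each $\sigma$ in $\Aut(\C/M)$, an isomorphism $\varphi_\sigma : X^\sigma \xrightarrow{\sim} X$ such that the collection $\{\varphi_\sigma\}$ satisfies the cocycle condition $\varphi_\sigma \circ \varphi_\tau^\sigma = \varphi_{\sigma\tau}$. The obstruction to making such a coherent choice lies in $H^2$ of the automorphism group, and it vanishes automatically when the relevant automorphism group is trivial or when the isomorphisms are canonically pinned down. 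So the strategy is to show that for a Galois \Belyi\ map $f : X \to \PP^1$ the isomorphisms in question are \emph{unique}, hence the cocycle condition is satisfied for free.

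The key point is the following rigidity: if $f : X \to \PP^1$ and $f' : X' \to \PP^1$ are two Galois \Belyi\ maps and $h, h' : X \xrightarrow{\sim} X'$ are two isomorphisms of \emph{mere} covers (i.e.\ $f = f' \circ h = f' \circ h'$), then $h^{-1} \circ h' \in \Aut(X, f) = \Gal(f)$, and moreover it must be a deck transformation fixing the branch data in a way that, once we also track a base point or a generator of inertia, forces $h = h'$. Concretely: $\Gal(f)$ acts simply transitively on a geometric fiber, so fixing where one point above, say, $0 \in \PP^1$ goes determines $h$ uniquely. Since $f$ is defined over $\Qbar$ (by \Belyi's theorem, as $X$ is a Galois \Belyi\ curve it is defined over $\Qbar$), the Galois action permutes these finitely many points, and one checks that the mere cover $(X, f)$ together with a marked point in the fiber over $0$ is rigid — it has no nontrivial automorphisms — so it descends to its field of moduli; then forgetting the marked point only shrinks the field of definition, and an elementary argument (the marked-point cover and $(X,f)$ have the same field of moduli since the marking can be twisted away) shows $(X,f)$ itself descends to $M(X,f)$. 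This is exactly the mechanism by which, for instance, dessins d'enfants are defined over their fields of moduli.

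For the second assertion, I would apply the first part to the canonical \Belyi\ map $f : X \to X/\!\Aut(X) \cong \PP^1$, which exists by Proposition \ref{wolfartequiv}(ii) since $X$ is a Galois \Belyi\ curve of genus $g \geq 2$ (the genus $0,1$ cases being handled separately, or trivially, via Examples \ref{Wolfartgenus0}, \ref{Wolfartgenus1} and Lemma \ref{lem:Wolfartgenus1}). By the first part, $f$ is defined over $M(X,f)$, so $X$ is defined over $M(X,f)$, whence $M(X) \subseteq M(X,f)$. For the reverse inclusion: any $\sigma \in \Aut(\C)$ with $X^\sigma \cong X$ carries $\Aut(X)$ to $\Aut(X^\sigma)$ compatibly with any isomorphism, hence carries the canonical quotient map of $X$ to that of $X^\sigma$; since the quotient $X \to X/\!\Aut(X)$ is intrinsic (it is the unique quotient by the full automorphism group), $f^\sigma \cong f$ as a mere cover, so $\sigma$ fixes $M(X,f)$. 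Thus $M(X,f) \subseteq M(X)$, giving equality.

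I expect the main obstacle to be the passage from the rigidity of the \emph{pointed} cover to the descent of the \emph{unpointed} \Belyi\ map over the same field — i.e.\ verifying that marking a point in the fiber over $0$ does not enlarge the field of moduli, so that the cocycle obstruction really does vanish at the level of $(X,f)$. This is where one must use that the deck group acts transitively on the fiber (so all markings are Galois-conjugate over $M(X,f)$) and that the resulting twist is split; in the Galois case this is classical, but it is the step that genuinely requires the hypothesis that $f$ be Galois, and it is worth spelling out rather than asserting. The rest — the two inclusions for $M(X)=M(X,f)$ — is essentially formal once one knows the intrinsic characterization of the map $X \to X/\!\Aut(X)$.
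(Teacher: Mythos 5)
Your proposal is essentially the standard Coombes--Harbater / D\`ebes--Emsalem / K\"ock argument, and that is exactly what the paper does: its ``proof'' of this lemma is a citation to K\"ock's Theorem 2.2 (and Wolfart), with no independent argument given. So in substance you are reconstructing the proof behind the citation, and the outline is right: rigidify by marking a point in a fiber, observe that the normalized isomorphisms $h_\sigma:(X^\sigma,f^\sigma)\xrightarrow{\sim}(X,f)$ are then unique, deduce the Weil cocycle condition for free, and descend. The second half (the two inclusions giving $M(X)=M(X,f)$ via the intrinsic nature of $X\to X/\!\Aut(X)$) is also the argument K\"ock gives.

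One point needs correcting. You mark a point in the fiber over $0$, while simultaneously invoking simple transitivity of $\Gal(f)$ on ``a geometric fiber.'' These are incompatible: $0$ is a branch point, so the stabilizer in $\Gal(f)$ of a point above $0$ is the (cyclic, nontrivial when $a>1$) inertia group, and fixing the image of such a point determines $h_\sigma$ only up to inertia --- which destroys the uniqueness that forces the cocycle condition. The marked point must be taken in the fiber over an $M(X,f)$-rational point $t_0\in\PP^1\setminus\{0,1,\infty\}$ (such a point exists since $M(X,f)$ is an infinite field, and rationality of $t_0$ is what guarantees $\sigma$ permutes the fiber $f^{-1}(t_0)$). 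With that fix, each $h_\sigma$ is pinned down uniquely by $h_\sigma(x_0^\sigma)=x_0$, the cocycle condition is automatic, and Weil descent applies directly to the unpointed cover; your worried-about passage from the pointed to the unpointed cover then evaporates, since the marking is only used to normalize the choice of $h_\sigma$ for $\sigma$ already fixing $M(X,f)$, not to define a new moduli field. A last small caveat for the equality $M(X)=M(X,f)$: an isomorphism $X^\sigma\cong X$ induces an automorphism of $\PP^1$ permuting the three branch points, so one must normalize the identification $X/\!\Aut(X)\cong\PP^1$ (e.g.\ by ordering the branch points by ramification index) to land in the paper's strict notion of isomorphism of covers ($f=f'\circ h$ with the same target); this is handled in K\"ock and is harmless, but worth a sentence.
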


\begin{proof}
D\`ebes and Emsalem \cite{DebesEmsalem} remark that this lemma follows from results of Coombes and Harbater \cite{CoombesHarbater}.  The proof was written down by K\"ock \cite[Theorem 2.2]{Kock}: in fact, he shows that any Galois covering of curves $X \to \PP^1$ can be defined over the field of moduli of the cover (similarly defined), and the field of moduli of $X$ as a curve is equal to the field of moduli of the covering $X \to X/\!\Aut(X)$.  See also the proof in Wolfart \cite[Theorem 5]{WolfartABC}.
\end{proof}

Keeping track of the action of the automorphism group, we also have the following result.

\begin{lem} \label{lem:canbedefinedG}
Let $f:X \to \PP^1$ be a $G$-Galois \Belyi\ map.  Suppose that $C_{\Aut(X)}(G)=\{1\}$, i.e., the centralizer of $G$ in $\Aut(X)$ is trivial. Then $f$ and the action of $\Gal(f)\cong G$ can be defined over its field of moduli $M(X,f,G)$.
\end{lem}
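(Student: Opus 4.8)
The plan is to run Weil's descent machinery exactly as in the proof of Lemma~\ref{canbedefined}, but now carrying the $G$-action $i\colon G\xrightarrow{\sim}\Gal(f)$ along with the cover $f$.  First I would record that everything in sight is defined over a number field: there are only finitely many Galois \Belyi\ curves of each genus (Remark~\ref{onlyfinGaloisBelyi}), a $G$-Galois \Belyi\ map is cut out by the finite data of $f$ together with the images $i(g)\in\Aut(X)(\Qbar)$ of a generating set of $G$, and $f$ itself descends to $M(X,f)$ by Lemma~\ref{canbedefined}; hence $(X,f,i)$ is defined over some number field $L$, the field of moduli $M:=M(X,f,G)$ lies in $\Qbar$, and we may study the action of $\Gamma_M:=\Gal(\Qbar/M)$.

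For each $\sigma\in\Gamma_M$ the very definition of $M(X,f,G)$ supplies an isomorphism of $G$-Galois \Belyi\ maps $\phi_\sigma\colon(X,f,i)\xrightarrow{\sim}(X^\sigma,f^\sigma,i^\sigma)$.  The key point is that $\phi_\sigma$ is the \emph{unique} such isomorphism.  Indeed, two of them would differ by an element $h\in\Aut(X)$ with $f\circ h=f$ and $h\circ i(g)=i(g)\circ h$ for all $g\in G$; the first condition forces $h\in\Aut(X,f)=\Gal(f)=i(G)$, and then $h\in C_{\Aut(X)}(G)=\{1\}$ by hypothesis.  (So the only thing actually needed is $Z(G)=\{1\}$, which the hypothesis $C_{\Aut(X)}(G)=\{1\}$ implies.)  Uniqueness has two consequences.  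It makes $\sigma\mapsto\phi_\sigma$ locally constant, since $(X^\sigma,f^\sigma,i^\sigma)$ depends only on $\sigma|_L$ and $\phi_\sigma$ is then forced; and it forces the cocycle identity $\phi_{\sigma\tau}=\phi_\sigma^\tau\circ\phi_\tau$ (with $\phi_\sigma^\tau$ the base change of $\phi_\sigma$ along $\tau$), because both sides are isomorphisms $(X,f,i)\xrightarrow{\sim}(X^{\sigma\tau},f^{\sigma\tau},i^{\sigma\tau})$ of $G$-Galois \Belyi\ maps.

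Thus $(\phi_\sigma)_{\sigma\in\Gamma_M}$ is a continuous Weil descent datum on the pair $(X,f)$ that is compatible with the $G$-action; it is effective because $X$ is projective and carries the $\Gamma_M$-equivariant ample line bundle $f^{*}\calO_{\PP^1}(1)$.  The resulting $M$-model $X_0\to\PP^1_M$, endowed with the $G$-action by $M$-automorphisms over $\PP^1_M$ induced from the descent datum, shows that $f$ together with its $\Gal(f)\cong G$ action is defined over $M=M(X,f,G)$.  The only real obstacle is the cocycle identity, and it is precisely the uniqueness of the $\phi_\sigma$ — that is, the hypothesis $C_{\Aut(X)}(G)=\{1\}$ — that makes it automatic; without such a hypothesis the $\phi_\sigma$ are ambiguous and their failure to satisfy the cocycle relation is recorded by a nonabelian obstruction class (a gerbe), which is exactly why the field of moduli can fail to be a field of definition in general.
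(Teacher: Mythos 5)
Your proof is correct and is essentially the paper's argument: the hypothesis forces the $G$-Galois cover $(X,f,i)$ to have no nontrivial automorphisms, so the Weil descent datum is unique, automatically a cocycle, and effective, giving a model over $M(X,f,G)$. The paper states this in two sentences (citing Weil descent directly), while you spell out the cocycle and effectivity steps and correctly observe along the way that $Z(G)=\{1\}$ already suffices for the uniqueness.
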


\begin{proof}
By definition, an automorphism of $f$ as a $G$-Galois \Belyi\ map is given by $h \in \Aut(X)$ such that $h i(g) h^{-1} = i(g)$ for all $g \in G$, so under the hypothesis of the lemma, $f$ has no automorphisms.  Thus $f$ and $\Gal(f)$ can be defined over $M(X,f,G)$ by the criterion of Weil descent.
\end{proof}

\begin{rmk}
Let $X$ be a curve which can be defined over its field of moduli $F=M(X)$.  Then the set of $F$-isomorphism classes of models for $X$ over $F$ is in bijection with the Galois cohomology set $H^1(\Gal(\overline{F}/F), \Aut(X))$, where $\Aut(X)$ is equipped with the natural action of the absolute Galois group $\Gal(\overline{F}/F)$.  Similar statements are true more generally for the other objects considered here, including \Belyi\ maps and $G$-Galois \Belyi\ maps.
\end{rmk}

As a consequence of Lemma \ref{lem:canbedefinedG}, if $G \cong \Aut(X)$ and $G$ has trivial center $Z(G)=\{1\}$, then $f$ as a $G$-Galois \Belyi\ map can be defined over $M(X,f,G)$.  Under this hypothesis, if $K=M(X,f,G)$, then by definition the group $G$ occurs as a Galois group over $K(t)$, and in particular applying Hilbert's irreducibility theorem \cite[Chapter 3]{TGT} we find that $G$ occurs infinitely often as a Galois group over $K$.

\begin{exm} \label{XpoverQ}
Let $p$ be prime and let $X(p)_{/\C} =\Gamma(p) \backslash \calH^*$ be the classical modular curve, parametrizing (generalized) elliptic curves $E$ equipped with a basis of $E[p]$ which is symplectic with respect to the Weil pairing.  Then $\Aut(X(p)) \supseteq G=\PSL_2(\F_p)$, and the quotient map $j:X \to X/G \cong \PP^1$, corresponding to the inclusion $\Gamma(p) \subseteq \PSL_2(\Z)$, is ramified over $j=0,1728,\infty$ with indices $2,3,p$, so $X(p)$ is a Galois \Belyi\ curve.

For $p \leq 5$, the curve $X(p)$ has genus $0$ and thus
$\Aut X(p) = \PGL_2(\C)$.  For $p \geq 7$, the curve $X(p)$
has genus at least three (the curve $X(7)$ has genus $3$ and
is considered in more detail in the following example), so
$\Aut X(p)$ is a finite group containing $\PSL_2(\F_p)$.  In
fact we have $\Aut X(p) = \PSL_2(\F_p)$, as was shown by Mazur, following Serre \cite[p. 255]{Mazur}.  Later we will recover
this fact as a special case of a more general result.

The field of moduli of $j:X \to \PP^1$ is $\Q$, and indeed this map (and hence $X$) admits a canonical model over $\Q$ \cite{KatzMazur}.  This model is not unique, since the set $H^1(\Q, \Aut(X))$ is infinite: in fact, every isomorphism class of Galois modules $E[p]$ with $E$ an elliptic curve gives a different element in this set.

For $p> 2$, let $p^* = (-1)^{(p-1)/2}$ (so $\Q(\sqrt{p^*})$ is
the unique quadratic subfield of $\Q(\zeta_p)$).  The field of moduli of the $\PSL_2(\F_p)$-Galois \Belyi\ map $j$ is $\Q(\sqrt{p^*})$ when $p > 2$ and $\Q$ when $p = 2$, and in each case the field of moduli is a field of definition \cite[pp. 108-109]{Shih}.  Indeed, this follows from Weil descent when $p \geq 7$ and can be seen directly when $p=2,3,5$ as these correspond to spherical triples $(2,3,p)$ (cf.\ Example \ref{Wolfartgenus0}).
\end{exm}

\begin{exm} \label{Kleincurve}
The Klein quartic curve  \cite{Elkies}
 \[ X^3 Y + Y^3 Z + Z^3 X = 0 \]
 has field of definition equal to its field of moduli, which is $\Q$, and all elements of $\Aut(X)$ can be defined over $\Q(\sqrt{-7}) =
 \Q(\sqrt{7^*})$.  Although the Klein quartic is isomorphic to $X(7)$ over $\Qbar$, as remarked by Livn\'e, the Katz-Mazur canonical model of $X(7)$ agrees with the Klein quartic only over $\Q(\sqrt{-3})$.  The issue here concerns the fields of definition of the special points giving rise to the canonical model.  We do not go further into this issue here, but for more on this in the case of genus $1$, see work of Sijsling \cite{Sijsling}.
\end{exm}

\begin{rmk} \label{allWolfart}
We consider again Remark \ref{Galoisclosure}.  If the field of moduli of a \Belyi\ map $f:X \to \PP^1$ is $F$ then the field of moduli of its Galois closure $g:Y \to \PP^1$ as a \Belyi\ map contains $F$.  Consequently, let $F$ be a number field and let $X$ be an elliptic curve such that $\Q(j(X))=F$.  Then $X$ admits a \Belyi\ map defined over $F$.  The Galois closure $g:Y \to \PP^1$ therefore has field of moduli containing $F$, and so  for any number field $F$, there exists a $G$-Galois \Belyi\ map such that any field of definition of this map contains $F$.  Note that from Lemma \ref{lem:Wolfartgenus1} that outside of a handful of cases, the associated Galois \Belyi\ curve $Y$ has genus $g(X) \geq 2$.  This shows that $\Gal(\Qbar/\Q)$ acts faithfully on the set of isomorphism classes of $G$-Galois \Belyi\ curves.  However if $X \to \PP^1$ is a $G$-Galois \Belyi\ map and $H \leq G$ is a subgroup, then the field of moduli of $X \to X/H$ can be smaller
than the $M(X,f,G)$.

Nevertheless, Gonz\'alez-Diez and Jaikin-Zapirain \cite{GJZ} have recently shown that $\Gal(\Qbar/\Q)$ acts faithfully on the set of Galois \Belyi\ curves.
\end{rmk}

In view of Remark \ref{allWolfart}, we restrict our attention from the general setup to the special class of $G$-Galois \Belyi\ curves $X$ where $G=\PSL_2(\F_q)$ or $\PGL_2(\F_q)$.

\section{Congruence subgroups of triangle groups} \label{quatconst}

In this section, we associate a quaternion algebra over a totally real field to a triangle group following Takeuchi \cite{Takeuchi0}.  This idea was also pursued by Cohen and Wolfart \cite{CohenWolfart} with an eye toward results in transcendence theory, and further elaborated by Cohen, Itzykson, and Wolfart \cite{CoItWol}.  Here, we use this embedding to construct congruence subgroups of $\Delta$.  We refer to Vign\'eras \cite{Vigneras} for the facts we will use about quaternion algebras and Katok \cite{Katok} as a reference on Fuchsian groups.

Let $\Gamma \subseteq \SL_2(\R)$ be a subgroup such that $\Gammabar=\Gamma/\{\pm 1\} \subseteq \PSL_2(\R)$ has finite coarea, so in particular is $\Gamma$ is finitely generated.  Let
\[ F=\Q(\tr \Gamma)=\Q(\tr \gamma)_{\gamma \in \Gamma} \]
be the \defi{trace field} of $\Gamma$.  Then $F$ is a finitely generated extension of $\Q$.

Suppose further that $F$ is a number field, so $F$ has finite degree over $\Q$, and let $\Z_{F}$ be its ring of integers.  Let $F[\Gamma]$ be the $F$-vector space generated by $\Gamma$ in $\M_2(\R)$, and let $\Z_{F}[\Gamma]$ denote the $\Z_F$-submodule of $F[\Gamma]$ generated by $\Gamma$.  By work of Takeuchi \cite[Propositions 2--3]{Takeuchi00}, the ring $F[\Gamma]$ is a quaternion algebra over $F$.  If further $\tr(\Gamma) \subseteq \Z_F$, then $\Z_F[\Gamma]$ is an order in $F[\Gamma]$.

\begin{rmk}
 Schaller and Wolfart \cite{SchallerWolfart} call a Fuchsian group $\Gamma$ \defi{semi-arithmetic} if its trace field $F=\Q(\tr\Gamma)$ is a totally real number field and $\{ \tr \gamma^2 : \gamma \in \Gamma\}$ is contained in the ring of integers of $F$.  They ask if all semi-arithmetic groups are either arithmetic or subgroups of triangle groups; this conjecture remains open.  This is implied by a conjecture of Chudnovsky and Chudnovsky \cite[Section 7]{ChudChud}.  The Chudnovskys' conjecture is false if the group is not cocompact---this is implicit in work of McMullen and made explicit in work of Bouw and M\"oller \cite{BouwMoeller,BouwMoeller2}---but may still be true in the compact case.  See also work of Ricker \cite{Ricker}.
\end{rmk}

Let $(a,b,c)$ be a hyperbolic triple with $2 \leq a \leq b \leq c \leq \infty$.  As in \S 1, associated to the triple $(a,b,c)$ is the triangle group $\Delta(a,b,c) \subseteq \SL_2(\R)$ with $\Delta(a,b,c)/\{\pm 1\} \cong \Deltabar(a,b,c) \subseteq \PSL_2(\R)$.  Let $F=\Q(\tr \Delta(a,b,c))$ be the trace field of $\Delta(a,b,c)$.  The generating elements $\delta_s \in \Delta(a,b,c)$ for $s=a,b,c$ satisfy the quadratic equations
\[ \delta_s^2-\lambda_{2s}\delta_s + 1 = 0 \]
in $B$ where $\lambda_{2s}$ is defined in (\ref{lambdak}).

%The proof of the following lemma follows from straightforward trace identities.

\begin{lem}[{\cite[Lemma 2]{Takeuchi}}] \label{lemgentr}
Let $\Gamma \subseteq \SL_2(\R)$.  If $\gamma_1,\dots,\gamma_r$ generate $\Gamma$, then $\Q(\tr \Gamma)$ is generated by $\tr(\gamma_{i_1}\cdots \gamma_{i_s})$ for $\{i_1,\dots,i_s\} \subseteq \{1,\dots,r\}$.
\end{lem}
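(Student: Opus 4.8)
The plan is to prove the slightly stronger statement that the \emph{ring} $R$ generated over $\Z$ by the traces $\tr(\gamma_{i_1}\cdots\gamma_{i_s})$, taken over all sequences of distinct indices $i_1,\dots,i_s$ in $\{1,\dots,r\}$, already contains $\tr(\gamma)$ for every $\gamma\in\Gamma$. Since each of these generating traces is itself of the form $\tr(\gamma)$, this shows $\Z[\tr\gamma:\gamma\in\Gamma]=R$, and hence $\Q(\tr\Gamma)=\operatorname{Frac}(R)$ is generated over $\Q$ by these finitely many traces. As $\Gamma$ is generated by $\gamma_1,\dots,\gamma_r$ and their inverses, every $\gamma\in\Gamma$ is a word $w$ in the letters $\gamma_1^{\pm1},\dots,\gamma_r^{\pm1}$, so it suffices to show $\tr(w)\in R$ for each such word. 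The tools are: (i) Cayley--Hamilton for $\SL_2$, namely $M^2-\tr(M)M+I=0$, equivalently $M+M^{-1}=\tr(M)I$, which yields $\tr(M^{-1})=\tr(M)$ and $\tr(MN)+\tr(MN^{-1})=\tr(M)\tr(N)$; and (ii) cyclic invariance $\tr(MN)=\tr(NM)$. (Nothing uses that the ground field is $\R$: the argument works over $\SL_2$ of any commutative ring, and is the classical Fricke--Vogt reduction.)

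First I would eliminate inverse letters. If a word $w$ contains some $\gamma_j^{-1}$, cyclic invariance lets me rotate it to the front, $w=\gamma_j^{-1}V$, and then $\tr(w)=\tr(\gamma_j)\tr(V)-\tr(\gamma_j V)$ by (i). Here $\tr(\gamma_j)\in R$ (it is the trace attached to the one-element sequence $(j)$), while both $V$ and $\gamma_j V$ contain strictly fewer inverse letters than $w$ and have length at most that of $w$. Inducting on the number of inverse letters reduces the problem to \emph{positive} words $w=\gamma_{j_1}\cdots\gamma_{j_k}$.

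Next I would reduce positive words by induction on the length $k$. If no index is repeated, then $w$ is (up to a cyclic rotation, which does not change the trace) one of the chosen generators of $R$, and we are done; if one insists on indexing by an \emph{ordered} subset, the classical identity $\tr(ABC)+\tr(ACB)=\tr(A)\tr(BC)+\tr(B)\tr(CA)+\tr(C)\tr(AB)-\tr(A)\tr(B)\tr(C)$, together with its consequences for transposing adjacent letters in longer products, expresses any ordering in terms of a fixed one plus traces of strictly shorter products. If some $\gamma_i$ occurs at least twice, rotate $w$ cyclically to the form $\gamma_i A\gamma_i B$ where $A$ contains no occurrence of $\gamma_i$. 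When $A$ is empty, $\gamma_i^2=\tr(\gamma_i)\gamma_i-I$ gives $\tr(w)=\tr(\gamma_i)\tr(\gamma_i B)-\tr(B)$; when $A$ is nonempty, applying (i) with $X=\gamma_i A$ and $Y=\gamma_i B$, then using cyclic invariance to cancel $\gamma_i^{-1}\gamma_i$, gives $\tr(w)=\tr(\gamma_i A)\tr(\gamma_i B)-\tr(AB^{-1})$. In either case $\tr(w)$ is an explicit polynomial with coefficients in $R$ in the traces of the words $\gamma_i A,\ \gamma_i B,\ B,\ AB^{-1}$, each of length strictly less than $k$ (a letter count gives $|\gamma_i A|,|\gamma_i B|\le k-1$ and $|AB^{-1}|=k-2$); after running the inverse-elimination step on $AB^{-1}$ these become positive words of length $<k$, and the inductive hypothesis finishes the argument.

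The main thing to watch is the bookkeeping of the nested induction: one must confirm that Step 2 always passes to words of strictly smaller length (so the outer induction on $k$ is well-founded) and that Step 1 never increases length (so it may be safely reinvoked inside Step 2), both of which are immediate from the letter counts above. The only genuinely delicate point is cosmetic --- matching the ordered products $\gamma_{i_1}\cdots\gamma_{i_s}$ produced by the reduction to whatever convention the statement attaches to an unordered subset $\{i_1,\dots,i_s\}$ --- and this is handled by the three-term trace identity; since the relevant set of traces is finite and the statement only concerns the field they generate, none of this affects the conclusion. Alternatively, this is \cite[Lemma 2]{Takeuchi} and may simply be invoked.
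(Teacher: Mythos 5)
Your proof is correct: the trace identities $M+M^{-1}=\tr(M)I$, $\tr(MN)+\tr(MN^{-1})=\tr(M)\tr(N)$, and the three-term reordering identity, combined with the double induction on word length and number of inverse letters, do reduce every $\tr(\gamma)$ to a polynomial in the finitely many generating traces, and the bookkeeping you flag (lengths strictly decrease in Step 2, never increase in Step 1) checks out. The paper itself offers no proof of this lemma --- it simply cites Takeuchi's Lemma 2 --- and your argument is essentially Takeuchi's own, the classical Fricke--Vogt trace reduction, so there is nothing to compare beyond noting that you have reconstructed the cited proof.
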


\begin{cor}
If $\Gamma$ is generated by finitely many elliptic elements, then its trace field $F=\Q(\tr \Gamma)$ is a totally real number field.
\end{cor}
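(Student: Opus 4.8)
The plan is to combine Lemma~\ref{lemgentr} with the observation that an elliptic element of finite order has a totally real cyclotomic trace, and then to argue that the same holds for the traces of the products of the given elliptic generators that appear in that lemma.

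First I would fix a finite generating set $\gamma_1,\dots,\gamma_r$ of $\Gamma$ by elliptic elements. Since $\Gammabar$ has finite coarea, $\Gamma$ is discrete, so each $\gamma_i$ has finite order $n_i\geq 2$ and satisfies $\gamma_i^2-\tr(\gamma_i)\gamma_i+1=0$ in $\M_2(\R)$ with $\tr(\gamma_i)=\zeta_{n_i}^{k_i}+\zeta_{n_i}^{-k_i}=2\cos(2\pi k_i/n_i)$ for some integer $k_i$; in particular $\tr(\gamma_i)$ is an algebraic integer lying in the maximal totally real subfield $\Q(\zeta_{n_i})^{+}$ of $\Q(\zeta_{n_i})$, all of whose $\Q$-conjugates are real. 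Hence every $\gamma_i$, and therefore every product of the $\gamma_i$, is integral over $\Z$, and by Lemma~\ref{lemgentr} the field $F=\Q(\tr\Gamma)$ is generated over $\Q$ by the finitely many algebraic integers $t_S=\tr(\gamma_{i_1}\cdots\gamma_{i_s})$ as $S=\{i_1<\dots<i_s\}$ ranges over subsets of $\{1,\dots,r\}$. This already shows $F$ is a number field; it remains to prove $\sigma(t_S)\in\R$ for every embedding $\sigma\colon F\hookrightarrow\C$ and every $S$.

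Fix such a $\sigma$. By the results of Takeuchi quoted above, $B=F[\Gamma]$ is a quaternion algebra over $F$ whose reduced trace and reduced norm restrict on $\Gamma$ to $\tr$ and to $1$, respectively. Base changing, $B\otimes_{F,\sigma}\C\cong\M_2(\C)$, which yields a homomorphism $\rho_\sigma\colon\Gamma\to\GL_2(\C)$ with $\det\rho_\sigma\equiv 1$ and $\tr\rho_\sigma(\gamma)=\sigma(\tr\gamma)$ for all $\gamma$. Each $\rho_\sigma(\gamma_i)$ has order dividing $n_i$, hence is diagonalizable over $\C$ with a pair of inverse roots of unity as eigenvalues; so $\sigma(\tr\gamma_i)=\tr\rho_\sigma(\gamma_i)$ is real and $\rho_\sigma(\gamma_i)$ lies in a conjugate of $\mathrm{SU}(2)$. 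It then remains to propagate realness from the $\rho_\sigma(\gamma_i)$ to the products $t_S$. In the application that drives the paper, $\Gamma=\Delta(a,b,c)$ with generators $\delta_a,\delta_b,\delta_c$, this is automatic: the defining relation $\delta_a\delta_b\delta_c=-1$ gives $\tr(\delta_i\delta_j)=-\tr(\delta_k)$ and $\tr(\delta_a\delta_b\delta_c)=-2$, so every generating trace produced by Lemma~\ref{lemgentr} equals $\pm 2$ or $\pm\tr(\delta_s)$ for some $s\in\{a,b,c\}$ and is thus totally real; hence $\sigma(F)\subseteq\R$, and since $\sigma$ was arbitrary $F$ is totally real.

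The step I expect to be the main obstacle is precisely this last propagation in the absence of such relations: a product of two elliptic elements need not be elliptic, so a priori $\sigma(\tr(\gamma_i\gamma_j))$ need not be real. In general one wants to show that the Galois conjugate $\rho_\sigma(\Gamma)\subseteq\SL_2(\C)$, being generated by the finite-order elements $\rho_\sigma(\gamma_i)$, is conjugate in $\GL_2(\C)$ into $\mathrm{SU}(2)$ or into $\SL_2(\R)$; in either case all its traces are real, which yields $\sigma(t_S)\in\R$ and hence the total reality of $F$. Establishing this dichotomy for the Galois conjugates is the substantive point; everything else is bookkeeping with Lemma~\ref{lemgentr} and roots of unity.
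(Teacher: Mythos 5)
The paper gives no proof of this corollary, so the only real comparison is with the one case it uses, namely $\Gamma=\Delta(a,b,c)$, and there your argument is complete and is surely the intended one: the relation $\delta_a\delta_b\delta_c=-1$ forces every trace appearing in Lemma \ref{lemgentr} to equal $\pm 2$ or $\pm\lambda_{2s}$, and these are totally real algebraic integers, so $F=\Q(\lambda_{2a},\lambda_{2b},\lambda_{2c})$ is a totally real number field---exactly the computation the paper performs immediately after the corollary. For the general statement, however, your proof has a gap one step earlier than the one you flag. The assertion that ``every product of the $\gamma_i$ is integral over $\Z$'' is false: integrality is not preserved under products in a noncommutative ring, and a product of two finite-order elements of $\SL_2(\R)$ can have transcendental trace. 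So already the claim that $F$ is a number field is unjustified at that point, not merely the claim that it is totally real.

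Moreover, the obstacle you correctly identify at the end is not just the hard step---in this generality it is fatal to the statement. A cocompact quadrilateral group of signature $(0;2,2,2,e)$ with $e\geq 3$ is generated by four elliptic elements and $\Gammabar$ has finite coarea, yet its Teichm\"uller space is two-dimensional; since irreducible $\SL_2(\C)$-representations are determined up to conjugacy by their characters, while the traces of the generators and of the triple products $x_ix_jx_k$ (conjugates of $x_l^{-1}$) are pinned by the orders, some $\tr(x_ix_j)$ must vary continuously along the deformation. Hence generically $F$ is not even a number field, and because discrete faithful cofinite representations form an open set one can also arrange these traces to be algebraic but not totally real. The underlying phenomenon is that $\langle x_i,x_j\rangle$ is typically a free product $\Z/n_i\ast\Z/n_j$ acting as a Fuchsian group of the second kind, with $\tr(x_ix_j)$ a free parameter; correspondingly your hoped-for dichotomy also fails, since a subgroup of $\SL_2(\C)$ generated by torsion elements need not be conjugate into $\mathrm{SU}(2)$ or $\SL_2(\R)$ (two order-four matrices with zero diagonal and off-diagonal entries $z,-z^{-1}$ and $w,-w^{-1}$ have product of arbitrary complex trace). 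The corollary should therefore be read as a statement about the rigid situation actually used in the paper, where your relation argument is the correct and complete proof.
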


% \jv{I think this is the right ordering; there are plenty of other situations where $F$ is a number field, so we want to state that bit in the right generality.}

By Lemma \ref{lemgentr}, we deduce
\[ F=\Q(\tr \Delta(a,b,c)) = \Q(\lambda_{2a},\lambda_{2b},\lambda_{2c}). \]
Taking traces in the equation
\[ \delta_a\delta_b=-\delta_c^{-1}=\delta_c-\lambda_{2c}, \]
yields
\[ -\tr(\delta_c^{-1}) = -\lambda_{2c} = \tr(\delta_a\delta_b)=\delta_a \delta_b + (\lambda_{2b}-\delta_b)(\lambda_{2a}-\delta_a). \]
Also we have
\begin{equation} \label{gammaagammab}
\delta_a\delta_b+\delta_b\delta_a=\lambda_{2b}\delta_a+\lambda_{2a}\delta_b - \lambda_{2c}-\lambda_{2a}\lambda_{2b}.
\end{equation}
Together with the cyclic permutations of these equations, we conclude that the elements $1,\delta_a,\delta_b,\delta_c$
form a $\Z_F$-basis for the order $\calO=\Z_{F}[\Delta] \subseteq B=F[\Delta]$ (see also Takeuchi \cite[Proposition 3]{Takeuchi}).

\begin{lem}
The reduced discriminant of $\calO$ is a principal $\Z_{F}$-ideal generated by
\[ \beta=\lambda_{2a}^2+\lambda_{2b}^2+\lambda_{2c}^2+\lambda_{2a}\lambda_{2b}\lambda_{2c}-4
= \lambda_a + \lambda_b + \lambda_c + \lambda_{2a}\lambda_{2b}\lambda_{2c} + 2. \]
\end{lem}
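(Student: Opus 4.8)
The plan is to compute the reduced discriminant directly from the explicit $\Z_F$-basis $1, \delta_a, \delta_b, \delta_c$ of $\calO = \Z_F[\Delta]$ that was just established. For a quaternion order $\calO$ with $\Z_F$-basis $e_1, e_2, e_3, e_4$, the reduced discriminant $\frakd(\calO)$ is the $\Z_F$-ideal whose square is generated by $\det(\tr(e_i e_j))_{i,j}$; equivalently, up to squares it is generated by the square root of that $4\times 4$ determinant, which for a quaternion algebra is always a perfect square in $F$. So the first step is to assemble the Gram matrix of the trace pairing $(x,y) \mapsto \tr(xy)$ on the basis $1, \delta_a, \delta_b, \delta_c$.

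The entries of this Gram matrix are all expressible in terms of $\lambda_{2a}, \lambda_{2b}, \lambda_{2c}$ using the relations already derived in the excerpt. Indeed $\tr(1) = 2$, $\tr(\delta_s) = \lambda_{2s}$, $\tr(\delta_s^2) = \lambda_{2s}^2 - 2$ (from $\delta_s^2 = \lambda_{2s}\delta_s - 1$ and $\nrd(\delta_s) = 1$), and the mixed traces $\tr(\delta_a\delta_b) = -\lambda_{2c}$ together with its cyclic permutations $\tr(\delta_b\delta_c) = -\lambda_{2a}$, $\tr(\delta_c\delta_a) = -\lambda_{2b}$ come straight from taking traces in $\delta_a\delta_b = \delta_c - \lambda_{2c}$ and cycling. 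Writing $x = \lambda_{2a}$, $y = \lambda_{2b}$, $z = \lambda_{2c}$ for brevity, the Gram matrix is
\[
M = \begin{pmatrix}
2 & x & y & z \\
x & x^2 - 2 & -z & -y \\
y & -z & y^2 - 2 & -x \\
z & -y & -x & z^2 - 2
\end{pmatrix}.
\]
The second step is to expand $\det M$. This is the step I expect to be the main (though entirely routine) obstacle: it is a $4\times 4$ determinant with symbolic entries, and one must carry out the expansion carefully. The expectation, which one can confirm, is that $\det M = -(x^2 + y^2 + z^2 + xyz - 4)^2$, so that $\det M = -\beta^2$ with $\beta = \lambda_{2a}^2 + \lambda_{2b}^2 + \lambda_{2c}^2 + \lambda_{2a}\lambda_{2b}\lambda_{2c} - 4$; the sign $-1$ reflects that $B$ is an indefinite quaternion algebra (split at the distinguished real place). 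One can streamline the computation by row/column operations exploiting the symmetry of $M$ under the $S_3$-action permuting the roles of $a,b,c$, or simply expand along the first row and collect terms.

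The final step is bookkeeping: since $\frakd(\calO)^2 = (\det M) \cdot (\text{unit square})$ — more precisely $\disc(\calO) = \det M$ and $\frakd(\calO)^2 = \disc(\calO)$ up to the convention fixing the sign — we conclude $\frakd(\calO) = (\beta)$ as a $\Z_F$-ideal. It remains only to record the alternative expression for $\beta$: applying the half-angle identity $\lambda_{2s}^2 = \lambda_s + 2$ (noted in the introduction) to each of the three squared terms gives $\beta = (\lambda_a + 2) + (\lambda_b + 2) + (\lambda_c + 2) + \lambda_{2a}\lambda_{2b}\lambda_{2c} - 4 = \lambda_a + \lambda_b + \lambda_c + \lambda_{2a}\lambda_{2b}\lambda_{2c} + 2$, which is the second displayed formula in the statement. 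Principality is then automatic since we have exhibited a generator.
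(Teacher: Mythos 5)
Your proposal is correct and takes essentially the same approach as the paper: the paper's proof computes the determinant of exactly the same trace-pairing Gram matrix on the basis $1,\delta_a,\delta_b,\delta_c$ and identifies the resulting ideal with $\beta^2\Z_F$ (the sign being irrelevant for the ideal). The paper also records an alternative, determinant-free computation of a generator via the scalar triple product $\tr([\delta_a,\delta_b]\delta_c)=\beta$, but that is offered only as a second route.
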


\begin{proof}
Let $\frakd$ be the discriminant of $\calO$.  Then we calculate from the definition that
\[ \frakd^2=\det
\begin{pmatrix}
2 & \lambda_{2a} & \lambda_{2b} & \lambda_{2c} \\
\lambda_{2a} & \lambda_{2a}^2-2 & -\lambda_{2c} & -\lambda_{2b} \\
\lambda_{2b} & -\lambda_{2c} & \lambda_{2b}^2-2 & -\lambda_{2a} \\
\lambda_{2c} & -\lambda_{2b} & -\lambda_{2a} & \lambda_{2c}^2-2
\end{pmatrix} \Z_{F}
= \beta^2 \Z_{F}. \]
Alternatively, we compute a generator for $\frakd$ using the scalar triple product and (\ref{gammaagammab}) as
\begin{align*}
\tr([\delta_a,\delta_b]\delta_c) &= \tr((\delta_a\delta_b-\delta_b\delta_a)\delta_c)
= \tr(2\delta_a\delta_b - (\lambda_{2b}\delta_a+\lambda_{2a}\delta_b-\lambda_{2c}-\lambda_{2a}\lambda_{2b})\delta_c) \\
&= -4 - \lambda_{2b}\tr(\delta_a\delta_c) - \lambda_{2a}\tr(\delta_b\delta_c)+\lambda_{2c}^2 + \lambda_{2a}\lambda_{2b}\lambda_{2c} = \beta
\end{align*}
since $\delta_a\delta_c=-\delta_b^{-1}$ and $\delta_b\delta_c=-\delta_a^{-1}$.
\end{proof}

\begin{lem} \label{pnmid}
If $\frakP$ is a prime of $\Z_{F}$ with $\frakP \nmid 2abc$, then $\frakP \nmid \beta$.  If further $(a,b,c)$ is not of the form $(mk,m(k+1),mk(k+1))$ with $k,m \in \Z$, then $\frakP \nmid \beta$ for all $\frakP \nmid abc$.
\end{lem}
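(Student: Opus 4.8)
The plan is to prove both statements at once by factoring $\beta$ explicitly as a product of four cyclotomic integers. Writing $\lambda_s = 2\cos(2\pi/s)$, so that $\lambda_{2s} = 2\cos(\pi/s)$ and, from the first expression for $\beta$,
\[ \beta = 4\Bigl(\cos^2\tfrac{\pi}{a} + \cos^2\tfrac{\pi}{b} + \cos^2\tfrac{\pi}{c} + 2\cos\tfrac{\pi}{a}\cos\tfrac{\pi}{b}\cos\tfrac{\pi}{c} - 1\Bigr), \]
one uses the classical factorization $\cos^2 x + \cos^2 y + \cos^2 z + 2\cos x\cos y\cos z - 1 = (\cos z + \cos(x+y))(\cos z + \cos(x-y))$ (a quadratic-in-$\cos z$ computation, the discriminant being $\sin^2 x\sin^2 y$) followed by product-to-sum to obtain
\[ \beta = \prod_{\epsilon}\bigl(\eta_{\epsilon} + \eta_{\epsilon}^{-1}\bigr), \qquad \eta_{\epsilon} = \zeta_{4a}^{\epsilon_1}\zeta_{4b}^{\epsilon_2}\zeta_{4c}^{\epsilon_3}, \]
where $\epsilon = (\epsilon_1,\epsilon_2,\epsilon_3)$ runs over a set of representatives for $\{\pm1\}^3$ modulo global negation (four classes, since $\eta_{-\epsilon}+\eta_{-\epsilon}^{-1}=\eta_\epsilon+\eta_\epsilon^{-1}$). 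Each $\eta_\epsilon$ is a root of unity of order dividing $4\lcm(a,b,c)\mid 4abc$, so $\eta_\epsilon^2 = \zeta_{2a}^{\epsilon_1}\zeta_{2b}^{\epsilon_2}\zeta_{2c}^{\epsilon_3}$ has order $n_\epsilon \mid 2\lcm(a,b,c)\mid 2abc$. Since $(a,b,c)$ is hyperbolic, $|\epsilon_1/a+\epsilon_2/b+\epsilon_3/c|\le 1/a+1/b+1/c<1$, so $\eta_\epsilon^2=-1$ (which would force this quantity to be an odd integer) never occurs; in particular every factor is nonzero and $\beta\neq 0$.

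For the first statement, fix $\frakP$ with $\frakP\nmid 2abc$. Writing $\eta_\epsilon+\eta_\epsilon^{-1}=\eta_\epsilon^{-1}(1+\eta_\epsilon^2)$ and using that $\eta_\epsilon$ is a unit, it suffices to show $\frakP\nmid 1+\eta_\epsilon^2$ for each $\epsilon$. Passing to a cyclotomic field containing $F$ and a prime above $\frakP$, this reduces to the standard estimate that for a primitive $n$-th root of unity $\zeta\neq -1$, every rational prime below a prime divisor of $(1+\zeta)$ divides $2n$: indeed the norm to $\Q$ of $1+\zeta$ is $\pm\Phi_n(-1)$ for $n\ge 3$, and $2\mid\Phi_n(-1)$ forces $n$ a power of $2$ (as $\Phi_n(-1)\equiv\Phi_n(1)\pmod 2$), while an odd prime $\ell$ divides $\Phi_n(-1)$ only if $\ell\mid n$ (otherwise $-1$ would be an element of order $n>2$ in $\F_\ell^\times$). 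With $n=n_\epsilon\mid 2abc$, every prime below $(1+\eta_\epsilon^2)$ divides $4abc$, hence divides $2abc$; as $\frakP\nmid 2abc$ we get $\frakP\nmid 1+\eta_\epsilon^2$, and therefore $\frakP\nmid\beta$.

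For the second statement, suppose $\frakP\nmid abc$ but $\frakP\mid\beta$. By the first part $\frakP$ lies over $2$, so $2\nmid abc$ and $a,b,c$ are all odd. Then $\epsilon_1 bc+\epsilon_2 ac+\epsilon_3 ab$ is odd for every $\epsilon$, so $n_\epsilon=2abc/\gcd(2abc,\epsilon_1 bc+\epsilon_2 ac+\epsilon_3 ab)$ is twice an odd number, say $n_\epsilon=2m_\epsilon$. If some $m_\epsilon=1$ then $\eta_\epsilon^2=\pm1$; the value $-1$ is excluded, and $\eta_\epsilon^2=1$ means $\epsilon_1 bc+\epsilon_2 ac+\epsilon_3 ab=0$, which by the hyperbolicity bound is possible only for the pattern $\epsilon=(-,+,+)$, i.e.\ $bc=a(b+c)$ --- impossible with $a,b,c$ all odd, since then the left side is odd and the right side even. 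Hence every $m_\epsilon\ge 3$ is odd, and the norm to $\Q$ of $1+\eta_\epsilon^2$ is $\pm\Phi_{2m_\epsilon}(-1)=\pm\Phi_{m_\epsilon}(1)$, which is $1$ or an odd prime; so $2$ divides no such norm, contradicting $\frakP\mid\beta$. Thus $\frakP\nmid\beta$ whenever $\frakP\nmid abc$. The family $(mk,m(k+1),mk(k+1))$ enters only because it lies inside the set of triples with $\tfrac1a=\tfrac1b+\tfrac1c$ --- the only situation in which some $\eta_\epsilon^2=1$ and thus contributes a factor $\pm2$ to $\beta$ --- and every such triple has $2\mid abc$; so excluding this family (or any superset of it) is more than enough for the displayed conclusion.

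The step I expect to be the real work is pinning down the factorization precisely: verifying the trigonometric identity and then tracking the half-angle bookkeeping so the exponents land in $\zeta_{4a},\zeta_{4b},\zeta_{4c}$ with $\eta_\epsilon$ of order dividing $4abc$ and $\eta_\epsilon^2$ of order dividing $2abc$. The only other delicate point is the prime $2$: when $a,b,c$ are odd each $\eta_\epsilon^2$ has \emph{even} order, so the convenient principle ``$\zeta$ of odd order $\Rightarrow 1+\zeta$ a unit'' does not apply, and one must use $\Phi_{2m}(-1)=\Phi_m(1)$ for odd $m>1$ to see the relevant norm stays odd. Everything else is routine cyclotomic-norm bookkeeping; I would isolate the factorization of $\beta$ as a short preliminary lemma and then run the two arguments above.
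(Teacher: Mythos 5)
Your proof is correct, and its backbone is the same as the paper's: the identity you derive from the quadratic-in-$\cos z$ factorization is, after writing $\eta_\epsilon+\eta_\epsilon^{-1}=\eta_\epsilon^{-1}(1+\eta_\epsilon^2)$ and noting $\prod_\epsilon \eta_\epsilon^{-1}=1$, literally the paper's factorization $\beta=\prod\bigl(\zeta_{2a}^{\epsilon_1}\zeta_{2b}^{\epsilon_2}\zeta_{2c}^{\epsilon_3}+1\bigr)$ (equation \eqref{betafactor}), and both arguments then reduce to deciding when a prime can divide $1+\zeta$ for a root of unity $\zeta$, with hyperbolicity killing the degenerate equations $\pm 1/a\pm 1/b\pm 1/c\in\Z$. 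The differences are in the packaging and are worth noting. For the local step you use the global norm $\N(1+\zeta)=\pm\Phi_n(-1)$ and the standard facts about prime divisors of $\Phi_n(-1)$, where the paper uses injectivity of the torsion of $\Z_K^\times$ under reduction mod $\frakP_K$ (kernel $\{\pm1\}$ at primes above $2$); these are interchangeable. The genuine divergence is at the prime $2$: the paper solves $1/b+1/c=1/a$ and produces the family $(mk,m(k+1),mk(k+1))$, whereas you observe that $a(b+c)=bc$ has no solution with $a,b,c$ all odd (parity), so the case $\frakP\mid 2$, $\frakP\nmid abc$ is vacuously harmless and the family hypothesis is not actually needed for the second assertion. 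This is a real, if small, sharpening: it yields $\frakP\nmid\beta$ for all $\frakP\nmid abc$ unconditionally, and it also sidesteps the paper's parametrization of solutions to $1/b+1/c=1/a$, which as written is incomplete (e.g.\ $(6,10,15)$ satisfies the equation but is not of the stated form)---harmless for the lemma, since all such triples have $2\mid abc$, but your route avoids the issue entirely. The one place to be slightly more careful in a final write-up is that the norm of $1+\eta_\epsilon^2$ from $K$ down to $\Q$ is a power of $\pm\Phi_{n_\epsilon}(-1)$ rather than that value itself; this does not affect any parity or divisibility conclusion you draw.
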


\begin{proof}
Let $\frakP$ be a prime of $F$ such that $\frakP \nmid abc$.  We have the following identity in the field $\Q(\zeta_{2a},\zeta_{2b},\zeta_{2c})=K$:
\begin{equation} \label{betafactor}
\beta=\left(\frac{\zeta_{2b}\zeta_{2c}}{\zeta_{2a}}+1\right)\left(\frac{\zeta_{2a}\zeta_{2c}}{\zeta_{2b}}+1\right)
\left(\frac{\zeta_{2a}\zeta_{2b}}{\zeta_{2c}}+1\right)\left(\frac{1}{\zeta_{2a}\zeta_{2b}\zeta_{2c}}+1\right).
\end{equation}
Let $\frakP_K$ be a prime above $\frakP$ in $K$ and suppose that $\frakP_K \mid \beta$.  Then $\frakP_K$ divides one of the factors in (\ref{betafactor}).

First, suppose that $\frakP_K \mid (\zeta_{2b}\zeta_{2c}\zeta_{2a}^{-1}+1)$, i.e., we have $\zeta_{2b}\zeta_{2c} \equiv -\zeta_{2a} \pmod{\frakP_K}$.  Suppose that $\frakP_K \nmid 2abc$.  Then the map $(\Z_K^\times)\sbtors \to \F_{\frakP_K}^\times$ is injective.  Hence $\zeta_{2b}\zeta_{2c}=-\zeta_{2a} \in K$.  But then embedding $K \hookrightarrow \C$ by $\zeta_s \mapsto e^{2\pi i/s}$ in the usual way, this equality would then read
\begin{equation} \label{abc1}
\frac{1}{b}+\frac{1}{c}=1+\frac{1}{a} \in \Q/2\Z.
\end{equation}
However, we have
\[ 0\leq\frac{1}{b}+\frac{1}{c}\leq 1<1+\frac{1}{a} < 2 \]
for any $a,b,c \in \Z_{\geq 2} \cup \{\infty\}$ when $a \neq \infty$, a contradiction, and when $a=\infty$ we have $b=c=\infty$ which again contradicts (\ref{abc1}).

Now suppose $\frakP_K \mid 2$ but still $\frakP_K \nmid abc$.  Then $\ker((\Z_K^\times)\sbtors \to \F_{\frakP}^\times)=\{\pm 1\}$, so instead we have the equation $\zeta_{2b}\zeta_{2c}=\pm \zeta_{2a} \in K$.  Arguing as above, it is enough to consider the equation with the $+$-sign, which is equivalent to
\[ \frac{1}{b}+\frac{1}{c}=\frac{1}{a}. \]
Looking at this equation under a common denominator we find that $b \mid c$, say $c=kb$.  Substituting this back in we find that $(k+1) \mid b$ so $b=m(k+1)$ and hence $a=km$ and $c=mk(k+1)$, and in this case we indeed have equality.

The case where $\frakP_K$ divides the middle two factors is similar.  The case where $\frakP_K$ divides the final factor follows from the impossibility of
\[ 0 = 1 + \frac{1}{a}+\frac{1}{b}+\frac{1}{c} \in \Q/2\Z \]
since $(a,b,c)$ is hyperbolic.
\end{proof}

We have by definition an embedding
\[ \Delta \hookrightarrow \calO_1^\times=\{\gamma \in \calO : \nrd(\gamma)=1\} \]
(where $\nrd$ denotes the reduced norm) and hence an embedding
\begin{equation} \label{basicembed}
\Deltabar = \Delta/\{\pm 1\} \hookrightarrow \calO_1^\times/\{\pm 1\}.
\end{equation}

In fact, the image of this map arises from a quaternion algebra over a smaller field, as follows.  Let $\Delta^{(2)}$ denote the subgroup of $\Delta$ generated by $-1$ and $\gamma^2$ for $\gamma \in \Delta$.  Then $\Delta^{(2)}$ is a normal subgroup of $\Delta$, and the quotient $\Delta/\Delta^{(2)}$ is an elementary abelian $2$-group.
We have an embedding
\[ \Delta^{(2)}/\{\pm 1\} \hookrightarrow \Delta/\{\pm 1\} = \Deltabar. \]

Recall the exact sequence (\ref{deltabarab}):
\[ 1 \to [\Deltabar,\Deltabar] \to \Deltabar \to \Deltabar^{\textup{ab}} \to 1. \]
Here, $\Deltabar^{\textup{ab}}$ is the quotient of $\Z/a\Z \times \Z/b\Z \times \Z/c\Z$ by the subgroup $(1,1,1)$.  We obtain $\Delta^{(2)} \supseteq [\Deltabar,\Deltabar]$ as the kernel of the (further) maximal elementary $2$-quotient of $\Deltabar^{\textup{ab}}$.  It follows that the quotient $\Delta/\Delta^{(2)}$ is generated by the elements $\delta_s$ for $s \in \{a,b,c\}$ such that either $s=\infty$ or $s$ is even, and
\begin{equation} \label{take124}
\Delta/\Delta^{(2)} \cong
\begin{cases}
\{0\}, & \text{if at least two of $a,b,c$ are odd}; \\
\Z/2\Z, & \text{if exactly one of $a,b,c$ is odd}; \\
(\Z/2\Z)^2, & \text{if all of $a,b,c$ are even or $\infty$}.
\end{cases}
\end{equation}
(See also Takeuchi \cite[Proposition 5]{Takeuchi}.)

Consequently, $\Delta^{(2)}$ is the normal closure of the set $\{-1, \delta_a^2,\delta_b^2,\delta_c^2\}$ in $\Delta$.  A modification of the proof of Lemma \ref{lemgentr} shows that the trace field of $\Delta^{(2)}$ can be computed on these generators (trace is invariant under conjugation).  We have
\[ \tr \delta_s^2 = \tr(\lambda_{2s} \delta_s-1)=\lambda_{2s}^2-2=\lambda_s-2 \]
for $s \in \{a,b,c\}$ and similarly
\[ \tr(\delta_a^2\delta_b^2)=\tr((\lambda_{2a}\delta_a-1)(\lambda_{2b}\delta_b-1))=\lambda_{2a}\lambda_{2b}\lambda_{2c} - \lambda_{2b}^2 - \lambda_{2a}^2 + 2 \]
and
\[ \tr(\delta_a^2 \delta_b^2 \delta_c^2) = \tr((\lambda_{2a} \delta_a-1)(\lambda_{2b} \delta_b-1)(\lambda_{2c} \delta_c-1)) = \lambda_{2a}^2 + \lambda_{2b}^2 + \lambda_{2c}^2 + \lambda_{2a}\lambda_{2b}\lambda_{2c} - 2; \]
from these we conclude that the trace field of $\Delta^{(2)}$ is equal to
\begin{equation} \label{deffabc}
E=F(a,b,c)=\Q(\lambda_{2a}^2,\lambda_{2b}^2,\lambda_{2c}^2,\lambda_{2a}\lambda_{2b}\lambda_{2c}) = \Q(\lambda_a,\lambda_b,\lambda_c,\lambda_{2a}\lambda_{2b}\lambda_{2c}).
\end{equation}
(See also Takeuchi \cite[Propositions 4--5]{Takeuchi}.)

\begin{exm}
The Hecke triangle groups $\Delta(2,n,\infty)$ for $n \geq 3$ have trace field $F=\Q(\lambda_{2n})$ whereas the corresponding groups $\Delta^{(2)}$ have trace field $E=\Q(\lambda_n)$, which is strictly contained in $F$ if and only if $n$ is even.
\end{exm}

Let $\Lambda=\Z_{E}[\Delta^{(2)}] \subseteq A = E[\Delta^{(2)}]$ be the order and quaternion algebra associated to $\Delta^{(2)}$.  By construction we have
\begin{equation} \label{delta2barembed}
\Delta^{(2)}/\{\pm 1\} \hookrightarrow \Lambda^{\times}_1/\{\pm 1\}.
\end{equation}

We then have the following fundamental result.

\begin{prop} \label{defnembedyeah}
The image of the natural homomorphism
\[ \Deltabar \hookrightarrow \frac{\calO_1^\times}{\{\pm 1\}} \hookrightarrow \frac{N_B(\calO)}{F^\times} \]
lies in the subgroup $N_{A}(\Lambda^{\times})/E^{\times}$ via
\begin{equation} \label{deltaembed}
\begin{aligned}
\Deltabar &\hookrightarrow \frac{N_{A}(\Lambda)}{E^{\times}} \hookrightarrow \frac{N_B(\calO)}{F^\times} \\
\deltabar_s &\mapsto \delta_s^2+1, \quad & \text{if $s \neq 2$}; \\
\deltabar_a &\mapsto (\delta_b^2+1)(\delta_c^2+1), \quad & \text{ if $a=2$}.
\end{aligned}
\end{equation}
where $s=a,b,c$ and $N$ denotes the normalizer.  The map \textup{(\ref{deltaembed})} extends the natural embedding \textup{(\ref{delta2barembed})}.
\end{prop}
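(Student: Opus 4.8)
The plan is to reduce everything to the quadratic relation $\delta_s^2 - \lambda_{2s}\delta_s + 1 = 0$ satisfied in $B$ by each generator $\delta_s$, $s \in \{a,b,c\}$, and then to keep careful track of signs. This relation gives the basic identity $\delta_s^2 + 1 = \lambda_{2s}\delta_s$, where $\lambda_{2s} = 2\cos(\pi/s)$ is nonzero precisely when $s \neq 2$ (with $\lambda_\infty = 2$); this is exactly why the case $a = 2$ must be separated out. First I would record the two consequences. For $s \neq 2$, the element $\delta_s^2 + 1$ is a nonzero $F$-scalar multiple of $\delta_s$, hence represents the class of $\delta_s$ in $B^\times/F^\times$. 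For the case $a = 2$ --- where necessarily $b,c \geq 3$ since $(2,2,c)$ is spherical, so $\lambda_{2b},\lambda_{2c} \neq 0$ --- we have $\delta_a^2 = -1$, so $\delta_a^{-1} = -\delta_a$, and combining with $\delta_a\delta_b\delta_c = -1$ gives $\delta_b\delta_c = -\delta_a^{-1} = \delta_a$; therefore
\[ (\delta_b^2+1)(\delta_c^2+1) = \lambda_{2b}\lambda_{2c}\,\delta_b\delta_c = \lambda_{2b}\lambda_{2c}\,\delta_a \]
is again a nonzero $F$-scalar multiple of $\delta_a$. Thus in every case the element on the right-hand side of \eqref{deltaembed} represents the class of $\delta_s$ in $B^\times/F^\times$, which is exactly the image of $\deltabar_s$ under the natural homomorphism $\Deltabar \hookrightarrow \calO_1^\times/\{\pm 1\} \hookrightarrow N_B(\calO)/F^\times$ of \eqref{basicembed}.

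Next I would verify that these right-hand-side elements lie in $N_A(\Lambda^\times)$. Membership in $A$ is immediate, and in fact they lie in $\Lambda$: $\delta_s^2 \in \Delta^{(2)}$ by definition, so $\delta_s^2 + 1 \in \Z_E[\Delta^{(2)}] = \Lambda$, and the product $(\delta_b^2+1)(\delta_c^2+1)$ lies in $\Lambda$ as well. Invertibility in $A$ follows because each such element is a nonzero scalar multiple of a generator $\delta_s \in \Delta \subseteq B^\times$, hence is a nonzerodivisor in $B$ and a fortiori in $A$; and a nonzerodivisor in the quaternion algebra $A$ is a unit, its inverse $\overline{x}/\nrd(x)$ again lying in $A$ since $\nrd(x) \in E^\times$. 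For the normalizing property, conjugation by $\lambda_{2s}\delta_s$ coincides with conjugation by $\delta_s$ since scalars are central, and $\delta_s \in \Delta$ normalizes the normal subgroup $\Delta^{(2)} \trianglelefteq \Delta$, so
\[ \delta_s\,\Lambda\,\delta_s^{-1} = \Z_E[\delta_s\,\Delta^{(2)}\,\delta_s^{-1}] = \Z_E[\Delta^{(2)}] = \Lambda; \]
the identical computation with $\Z_F[\Delta]$ shows $\delta_s$ normalizes $\calO$ too. Hence each right-hand side of \eqref{deltaembed} lies in $N_A(\Lambda^\times) \cap N_B(\calO)$.

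To finish, I would observe that the natural map $N_A(\Lambda^\times)/E^\times \to N_B(\calO)/F^\times$ (defined on classes of elements normalizing both orders) is injective: a class in its kernel is represented by an element of $A^\times \cap F^\times \subseteq A \cap F = E$, and $A \cap F = E$ because $B = F\cdot A$ --- indeed every generator of $\Delta$ is an $F$-multiple of an element of $\Lambda$, via $\delta_s = \lambda_{2s}^{-1}(\delta_s^2+1)$ for $s \neq 2$ and $\delta_a = \delta_b\delta_c$ when $a = 2$, so $B = F[\Delta] = F\cdot A \cong F \otimes_E A$. Therefore the natural homomorphism of \eqref{basicembed} corestricts (uniquely) to a homomorphism $\Deltabar \to N_A(\Lambda^\times)/E^\times$, and by the previous two paragraphs this corestriction is given on generators exactly by \eqref{deltaembed}; this is the asserted embedding. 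That it extends \eqref{delta2barembed} is then automatic: for $\gamma \in \Delta^{(2)}$ both this map and the composite $\Delta^{(2)}/\{\pm1\} \hookrightarrow \Lambda_1^\times/\{\pm1\} \hookrightarrow N_A(\Lambda^\times)/E^\times$ send $\gamma$ to a class whose image in $N_B(\calO)/F^\times$ is $\gamma F^\times$, so by the injectivity just established these classes coincide.

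I expect the main obstacle to be the sign bookkeeping together with the degenerate case $\lambda_4 = 0$: one must explain cleanly why $\deltabar_a$ is sent to a product of two factors when $a = 2$ and check that this product nonetheless represents the class of $\delta_a$, exploiting $\delta_a^2 = -1$. A secondary point requiring care is the compatibility of the three normalizer quotients $N_A(\Lambda^\times)/E^\times$, $N_B(\calO)/F^\times$, and $\Lambda_1^\times/\{\pm1\}$, which rests on the identification $B \cong F \otimes_E A$.
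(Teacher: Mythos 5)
Your proof is correct and follows essentially the same route as the paper's: both rest on the identity $\delta_s^2+1=\lambda_{2s}\delta_s$, handle $a=2$ via $\delta_b\delta_c=\delta_a$, and obtain the normalizer property from the fact that $\Delta$ normalizes $\Delta^{(2)}$ and hence $\Lambda=\Z_E[\Delta^{(2)}]$. The only organizational difference is that the paper verifies the homomorphism property directly by checking $(\delta_a^2+1)(\delta_b^2+1)(\delta_c^2+1)=-\lambda_{2a}\lambda_{2b}\lambda_{2c}\in E^\times$, while you obtain it by corestricting the already-defined map \eqref{basicembed}, justifying this with the (correct, and slightly more detailed than the paper) observation that $N_A(\Lambda^\times)/E^\times \to N_B(\calO)/F^\times$ is injective because $A\cap F=E$.
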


\begin{proof}[Proof of Proposition \textup{\ref{defnembedyeah}}]
First, suppose $a \neq 2$ (whence $b,c \neq 2$, by the assumption that $a \leq b \leq c$).  In $B$, for each $s=a,b,c$, we have
\begin{equation} \label{wow}
\delta_s^2+1=\lambda_{2s}\delta_s;
\end{equation}
since $s \neq 2$, so that $\lambda_{2s} \neq 0$, this implies that $\delta_s^2+1$ has order $s$ in $A^{\times}/E^{\times} \subseteq B^\times/F^\times$ and
\[ (\delta_a^2+1)(\delta_b^2+1)(\delta_c^2+1)=\lambda_{2a}\lambda_{2b}\lambda_{2c}\delta_a\delta_b\delta_c=-\lambda_{2a}\lambda_{2b}\lambda_{2c} \in E^\times, \]
so (\ref{deltaembed}) defines a group homomorphism $\Deltabar \hookrightarrow A^{\times}/E^{\times}$.  The image lies in the normalizer $N_{A}(\Lambda)$ because $\Delta^{(2)}$ generates $\Lambda$ and $\Delta$ normalizes $\Delta^{(2)}$.  Finally, we have
\[ (\delta_s^2+1)^2=\lambda_{2s}^2\delta_s^2 \in A, \]
so the map extends the natural embedding of $\Delta^{(2)}/\{\pm 1\}$.

If $a=2$, the same argument applies, with instead
\[ \deltabar_a \mapsto (\delta_b^2+1)(\delta_c^2+1) \]
since $(\delta_b^2+1)(\delta_c^2+1)=\lambda_{2b}\lambda_{2c}(-\delta_a^{-1})=\lambda_{2b}\lambda_{2c}\delta_a$ now has order $2$ in $A^{\times}/E^{^\times}$, and necessarily $b,c>2$ since the triple is hyperbolic.
\end{proof}

\begin{exm}
The triangle group $\Delta(2,4,6)$ has trace field $F=\Q(\sqrt{2},\sqrt{3})$.  However, the group $\Delta(2,4,6)^{(2)}$ has trace field $E=\Q$ and indeed we find an embedding $\Deltabar(2,4,6) \hookrightarrow N_{A}(\Lambda)/\Q^\times$ where $\Lambda$ is a maximal order in a quaternion algebra $A$ of discriminant $6$ over $\Q$.
\end{exm}

\begin{cor} \label{pnmidO}
The following statements hold.
\begin{enumalph}
\item We have $\Lambda \otimes_{\Z_{E}} \Z_F \subseteq \calO$.
\item If $a \neq 2$, the quotient $\calO/(\lambda \otimes_{\Z_E} \Z_F)$ is annihilated by $\lambda_{2a} \lambda_{2b} \lambda_{2c}$.
\item If $a =2$, the quotient $\calO/(\lambda \otimes_{\Z_E} \Z_F)$ is annihilated by $\lambda_{2b} \lambda_{2c}$.
\end{enumalph}
\end{cor}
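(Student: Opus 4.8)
The plan is to deduce all three parts from the single quadratic relation $\delta_s^2 + 1 = \lambda_{2s}\delta_s$ of \eqref{wow}, valid for every $s \in \{a,b,c\}$ (including $s=\infty$), together with the fact established above that $1,\delta_a,\delta_b,\delta_c$ is a $\Z_F$-basis of $\calO=\Z_F[\Delta]$. The only preliminary point is to make sense of $\Lambda \otimes_{\Z_E}\Z_F$ as a subobject of $B$: since $A = E[\Delta^{(2)}] \subseteq F[\Delta] = B$ and $F$ is central in $B$, multiplication induces a natural $F$-algebra map $A \otimes_E F \to B$, and as $A\otimes_E F$ is a central simple $F$-algebra of dimension $4$ this nonzero map is an isomorphism; consequently $\Lambda \otimes_{\Z_E}\Z_F$ (which is $\Z_F$-torsion free, $\Lambda$ being $\Z_E$-projective) embeds in $B$ with image the $\Z_F$-submodule $\Z_F[\Delta^{(2)}]$ of $B$ generated by $\Delta^{(2)}$. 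Part (a) is then immediate, since $\Delta^{(2)} \subseteq \Delta$ forces $\Z_F[\Delta^{(2)}] \subseteq \Z_F[\Delta] = \calO$.

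For part (b), assume $a \neq 2$, so that $\lambda_{2a},\lambda_{2b},\lambda_{2c}$ are all nonzero (recall $\lambda_{2s} = 2\cos(\pi/s)$ vanishes precisely when $s = 2$, while $\lambda_{2\infty} = 2$). For each $s$ we have $\delta_s^2 \in \Delta^{(2)} \subseteq \Z_F[\Delta^{(2)}]$, so $\lambda_{2s}\delta_s = \delta_s^2 + 1 \in \Z_F[\Delta^{(2)}]$; multiplying by the product of the two remaining $\lambda$'s, an element of $\Z_F$, gives $\lambda_{2a}\lambda_{2b}\lambda_{2c}\,\delta_s \in \Z_F[\Delta^{(2)}]$. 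Since also $\lambda_{2a}\lambda_{2b}\lambda_{2c}\cdot 1 \in \Z_F \subseteq \Z_F[\Delta^{(2)}]$, running over the four basis elements of $\calO$ yields $\lambda_{2a}\lambda_{2b}\lambda_{2c}\,\calO \subseteq \Z_F[\Delta^{(2)}] = \Lambda\otimes_{\Z_E}\Z_F$, which is the assertion.

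For part (c), assume $a = 2$; then $b,c > 2$ by hyperbolicity, so $\lambda_{2b}\lambda_{2c}\neq 0$, but $\lambda_{2a}\delta_a = 0$ and the step above fails for the basis vector $\delta_a$. Here I would invoke the identity already used in the proof of Proposition \ref{defnembedyeah}: from $\delta_a^2 = -1$ one has $\delta_b\delta_c = -\delta_a^{-1} = \delta_a$, hence $(\delta_b^2+1)(\delta_c^2+1) = \lambda_{2b}\lambda_{2c}\,\delta_a$; the left-hand side lies in the ring $\Z_E[\Delta^{(2)}] = \Lambda$, so $\lambda_{2b}\lambda_{2c}\,\delta_a \in \Lambda \otimes_{\Z_E}\Z_F$. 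Together with $\lambda_{2b}\lambda_{2c}\,\delta_b = \lambda_{2c}(\delta_b^2+1)$, $\lambda_{2b}\lambda_{2c}\,\delta_c = \lambda_{2b}(\delta_c^2+1)$, and $\lambda_{2b}\lambda_{2c}\cdot 1$, all lying in $\Z_F[\Delta^{(2)}]$, checking on the basis $1,\delta_a,\delta_b,\delta_c$ gives $\lambda_{2b}\lambda_{2c}\,\calO \subseteq \Lambda\otimes_{\Z_E}\Z_F$, as claimed.

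Every computation here is elementary and short; the one step deserving care is the preliminary identification of $\Lambda\otimes_{\Z_E}\Z_F$ with the $\Z_F$-lattice $\Z_F[\Delta^{(2)}]$ inside $B$, so that the asserted inclusions are literal containments — but this presents no real difficulty once $B = A\otimes_E F$ is in hand.
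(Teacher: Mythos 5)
Your proof is correct and follows the same route as the paper: the paper's entire proof is the one-line observation that the claims follow from the relation $\delta_s^2+1=\lambda_{2s}\delta_s$ of \eqref{wow} together with the fact that $1,\delta_a,\delta_b,\delta_c$ is a $\Z_F$-basis of $\calO$, which is exactly the computation you carry out basis vector by basis vector. Your additional care in identifying $\Lambda\otimes_{\Z_E}\Z_F$ with $\Z_F[\Delta^{(2)}]$ inside $B$, and in handling $\delta_a$ when $a=2$ via $(\delta_b^2+1)(\delta_c^2+1)=\lambda_{2b}\lambda_{2c}\delta_a$ (the identity from Proposition \ref{defnembedyeah}), fills in details the paper leaves implicit.
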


\begin{proof}
This follows from (\ref{wow}) since a basis for $\calO$ is given by $1,\delta_{a},\delta_{b},\delta_{c}$.
\end{proof}

We now define congruence subgroups of triangle groups.  Let $\frakN$ be an ideal of $\Z_F$ such that $\frakN$ is coprime to $abc$ and either $\frakN$ is coprime to $2$ or
\begin{center}
$(a,b,c) \neq (mk,m(k+1),mk(k+1))$ with $m,k \in \Z$.
\end{center}
Then by Lemma \ref{pnmid}, we have an isomorphism
\begin{equation} \label{thesplitting}
\calO \otimes_{\Z_F} \Z_{F,\frakN} \cong \M_2(\Z_{F,\frakN})
\end{equation}
where $\Z_{F,\frakN}$ denotes the $\frakN$-adic completion of the ring $\Z_F$: this is the product of the completions at $\frakP$ for $\frakP \mid \frakN$ and thus is a finite product of discrete valuation rings.  Any two maximal orders in a split quaternion algebra over a discrete valuation ring $R$ with fraction field $K$ are conjugate by an element of $M_2(K)$ \cite[Th\'eor\`eme II.2.3]{Vigneras}, and it follows easily
that the isomorphism (\ref{thesplitting}) is unique up to conjugation by an element of $\GL_2(\Z_{F,\frakN})$.  Alternately (and perhaps more fundamentally) since the ring $\Z_{F,\frakN}$ has trivial Picard group, the result follows from a generalization of the Noether-Skolem Theorem \cite[Corollary 12]{Rosenberg-Zelinsky}.

Let
\begin{equation} \label{eqn:OfrakN}
\calO(\frakN)=\{\gamma \in \calO : \gamma \equiv 1 \psmod{\frakN \calO}\}.
\end{equation}
The definition of $\calO(\frakN)$ does not depend on the choice of isomorphism in (\ref{thesplitting}).  Then $\calO(\frakN)_1^\times$ is normal in $\calO_1^\times$ and we have an exact sequence
\[ 1 \to \calO(\frakN)_1^\times \to \calO_1^\times/\{\pm 1\} \to \PSL_2(\Z_F/\frakN) \to 1 \]
where surjectivity follows from strong approximation \cite[Th\'eor\`eme III.4.3]{Vigneras}.  Let
\[ \Deltabar(\frakN) = \Deltabar \cap \calO(\frakN)_1^\times.  \]
Then we have
\begin{equation} \label{maptofrakP}
\frac{\Deltabar}{\Deltabar(\frakN)} \hookrightarrow \frac{\calO_1^{\times}/\{\pm 1\}}{\calO(\frakN)_1^{\times}} \cong \PSL_2(\Z_F/\frakN).
\end{equation}
We conclude by considering the image of the embedding (\ref{maptofrakP}).  Let $\frakn$ be the prime of $E=F(a,b,c)$ below $\frakN$.  Then $\frakn$ is coprime to the discriminant of $\Lambda$ since the latter divides $(\lambda_{2a}\lambda_{2b}\lambda_{2c})\beta$ by Corollary \ref{pnmidO}.  Therefore, we may define $\Lambda(\frakn)$ analogously.  Then by Proposition \ref{defnembedyeah}, we have an embedding
\begin{equation} \label{PGLwoah}
\Deltabar \hookrightarrow \frac{N_{A}(\Lambda)}{E^{\times}} \hookrightarrow \frac{A^{\times}}{E^{\times}} \hookrightarrow \frac{A_\frakn^{\times}}{E_\frakn^{\times}} \cong \PGL_2(E_\frakn)
\end{equation}
where $E_\frakn$ denotes the completion of $E$ at $\frakn$.  The image of $\Deltabar$ in this map lies in $\PGL_2(\Z_{E,\frakn})$ by \eqref{wow} since $\lambda_{2s} \in \Z_{E,\frakn}^\times$ for $s=a,b,c$ (since $\frakn$ is coprime to $abc$).  Reducing the image in (\ref{PGLwoah}) modulo $\frakn$, we obtain a map
\[ \Deltabar \to \PGL_2(\Z_E/\frakn). \]
This map is compatible with the map $\Deltabar \to \PSL_2(\Z_F/\frakN)$ inside $\PGL_2(\Z_F/\frakN)$, obtained by comparing the images in the reduction modulo $\frakN$ of $B^\times/F^\times$, by Proposition \ref{defnembedyeah}.

We summarize the main result of this section.

\begin{prop} \label{pglyeah}
Let $a,b,c \in \Z_{\geq 2} \cup \{\infty\}$.  Let $\frakN$ be an ideal of $\Z_F$ with $\frakN$ prime to $abc$ and such that either $\frakN$ is prime to $2$ or $(a,b,c) \neq (mk,m(k+1),mk(k+1))$ with $m,k \in \Z$.  Let $\frakn=\Z_E \cap \frakN$.  Then there exists a homomorphism
\[ \phi: \Deltabar(a,b,c) \to \PSL_2(\Z_F/\frakN) \]
such that $\tr \phi(\deltabar_s) \equiv \pm \lambda_{2s} \pmod{\frakN}$ for $s=a,b,c$.  The image of $\phi$ lies in the subgroup
\[ \PGL_2(\Z_E/\frakn) \cap \PSL_2(\Z_F/\frakN) \subseteq \PGL_2(\Z_F/\frakN). \]
\end{prop}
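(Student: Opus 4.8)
The plan is to assemble the homomorphism $\phi$ from the constructions already carried out in this section; no genuinely new input is required beyond Lemma~\ref{pnmid}, Proposition~\ref{defnembedyeah}, and Corollary~\ref{pnmidO}, and in particular the proposition does not assert surjectivity (which would use strong approximation) but only existence together with the trace and image properties. Under the stated hypotheses on $\frakN$, Lemma~\ref{pnmid} gives $\frakP \nmid \beta$ for every prime $\frakP \mid \frakN$, so $\frakN$ is prime to the reduced discriminant of $\calO$ and the splitting isomorphism (\ref{thesplitting}), $\calO \otimes_{\Z_F}\Z_{F,\frakN} \cong \M_2(\Z_{F,\frakN})$, is available, unique up to conjugation in $\GL_2(\Z_{F,\frakN})$. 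I would then take $\phi$ to be the composite of the canonical embedding (\ref{basicembed}) with reduction modulo $\frakN$ through this splitting,
\[
\phi\colon \Deltabar \hookrightarrow \frac{\calO_1^\times}{\{\pm 1\}} \to \frac{\SL_2(\Z_F/\frakN)}{\{\pm 1\}} = \PSL_2(\Z_F/\frakN),
\]
which recovers the map of (\ref{maptofrakP}). Since (\ref{basicembed}) sends $\deltabar_s$ to the class of $\delta_s \in \calO_1^\times$ and $\delta_s$ satisfies $\delta_s^2 - \lambda_{2s}\delta_s + 1 = 0$, we have $\tr \delta_s = \lambda_{2s}$; as lifting from $\PSL_2(\Z_F/\frakN)$ to $\SL_2(\Z_F/\frakN)$ only introduces a sign, this yields $\tr \phi(\deltabar_s) \equiv \pm \lambda_{2s} \pmod{\frakN}$.

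It remains to locate the image of $\phi$. By Proposition~\ref{defnembedyeah} the embedding $\Deltabar \hookrightarrow N_B(\calO)/F^\times$ factors through $N_A(\Lambda)/E^\times$, where $A = E[\Delta^{(2)}]$ and $\Lambda = \Z_E[\Delta^{(2)}]$. As $\frakn = \Z_E \cap \frakN$ is prime to $abc$, it is coprime to the reduced discriminant of $\Lambda$ (which divides $(\lambda_{2a}\lambda_{2b}\lambda_{2c})\beta$ by Corollary~\ref{pnmidO}), so $\Lambda$ splits $\frakn$-adically and one obtains, as in (\ref{PGLwoah}), $\Deltabar \hookrightarrow A_\frakn^\times/E_\frakn^\times \cong \PGL_2(E_\frakn)$. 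By (\ref{wow}), $\deltabar_s$ maps to $\lambda_{2s}\delta_s$, which lies in $\GL_2(\Z_{E,\frakn})$ since $\lambda_{2s} \in \Z_{E,\frakn}^\times$, so reduction modulo $\frakn$ gives $\Deltabar \to \PGL_2(\Z_E/\frakn)$. Finally I would check that this map agrees with $\phi$ inside $\PGL_2(\Z_F/\frakN)$: Corollary~\ref{pnmidO}(a) gives $\Lambda \otimes_{\Z_E}\Z_F \subseteq \calO$ with quotient annihilated by a product of the $\lambda_{2s}$, which are units in $\Z_{F,\frakN}$, so $\Lambda \otimes_{\Z_E}\Z_{F,\frakN} = \calO \otimes_{\Z_F}\Z_{F,\frakN} = \M_2(\Z_{F,\frakN})$, and choosing the splitting of $B$ over $\Z_{F,\frakN}$ to extend a chosen splitting of $A$ over $\Z_{E,\frakn}$ makes the relevant square commute. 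Hence the image of $\phi$ lands in $\PGL_2(\Z_E/\frakn) \cap \PSL_2(\Z_F/\frakN)$.

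The routine parts --- unwinding definitions and tracking matrices through the two splittings --- are straightforward. The one point that demands care is the compatibility of the $E$-rational structure on $A$ with the $F$-rational structure on $B = A \otimes_E F$ after completion: one must be able to choose the $\frakn$-adic splitting of $A$ and the $\frakN$-adic splitting of $\calO$ simultaneously, so that the triangle relating $\PGL_2(\Z_E/\frakn)$, $\PSL_2(\Z_F/\frakN)$, and $\PGL_2(\Z_F/\frakN)$ commutes. This is exactly what Corollary~\ref{pnmidO} secures, by forcing $\Lambda \otimes_{\Z_E}\Z_F$ and $\calO$ to coincide $\frakN$-adically, so there is no serious obstruction here.
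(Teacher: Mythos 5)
Your proof is correct and takes essentially the same route as the paper: Proposition \ref{pglyeah} is stated there as a summary of the section, and its proof is exactly the assembly you describe --- Lemma \ref{pnmid} to split $\calO$ at $\frakN$, the reduction (\ref{maptofrakP}) giving the trace condition via $\delta_s^2-\lambda_{2s}\delta_s+1=0$, and Proposition \ref{defnembedyeah} with Corollary \ref{pnmidO} to land the image in $\PGL_2(\Z_E/\frakn)$. Your explicit check that $\Lambda\otimes_{\Z_E}\Z_{F,\frakN}=\calO\otimes_{\Z_F}\Z_{F,\frakN}$ (since the quotient is killed by the units $\lambda_{2s}$) usefully fleshes out the compatibility of the two splittings, which the paper dispatches in one sentence.
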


\begin{rmk} \label{TheoremBext}
We conclude this section with some remarks extending the primes $\frakP$ of $F$ (equivalently, primes $\frakp$ of $E$) for which the construction applies.

First, we note that whenever $\frakP \nmid \beta$, the order $\calO$ is maximal at $\frakP$.

Second, even for a ramified prime $\frakP$ (or $\frakp$), we still can consider the natural map to the completion; however, instead of $\PGL_2(F_\frakP)$ we instead obtain the units of an order in a division algebra over $F_\frakP$, a prosolvable group.  Our interest remains in the groups $\PSL_2$ and $\PGL_2$, but this case also bears further investigation: see Takei \cite{Takei} for some results in the case where $b=c=\infty$.

%\plc{Maybe these subsequent facts should be isolated as additional results, possibly worked back into the text a bit.}  \jv{I like this %idea.  But we refer to this remark in a couple of places, so if you do this, just make sure that it is part of a numbered (lemma) %environment.}

Third, we claim that
\[ B \cong \quat{\lambda_{2s}^2-4,\beta}{F} \]
for any $s \in \{a,b,c\}$.  Indeed, given the basis $1,\delta_a,\delta_b,\delta_c$, we construct an orthogonal basis for $B$ as
\[ 1, 2\delta_a-\lambda_{2a}, (\lambda_{2a}^2 - 4)\delta_b + (\lambda_{2a}\lambda_{2b}+2\lambda_{2c})\delta_a - (\lambda_{2a}^2\lambda_{2b} - \lambda_{2a}\lambda_{2c} + 2\lambda_{2b}) \]
which gives rise to the presentation $B \cong \quat{4-\lambda_{2a}^2,\beta}{F}$.  The others follow by symmetry.  It follows that a prime $\frakP$ of $\Z_F$ ramifies in $B$ if and only if we have for the Hilbert symbol (quadratic norm residue symbol) $(\lambda_{2s}^2-4,\beta)_{\frakP}=-1$ for (any) $s \in \{a,b,c\}$.  For example, if $(a,b,c)=(2,3,c)$ (with $c \geq 7$), one can show that the quaternion algebra $B$ is ramified at no finite place.

% Indeed, we have $\lambda_4^2-4=-4$ and $\beta=1+\lambda_{2c}^2-4=\lambda_{2c}^2-3$.  If $p \mid c$ but $p \nmid 6$ and $\frakp$ is a prime of $\Z_F$ above $p$ then $\frakp \mid \lambda_{2c}$ so $(-1,\beta)_{\frakp}=1$.  For a prime $\frakp$ above $3$, we have $\ord_\frakp(3)$ is even so $\beta$ is a unit in $\Z_{F,\frakp}$ and hence $(-1,\beta)_{\frakp}=1$.  Finally, if $\frakp$ is above $2$ then we have $B \cong \quat{K,\beta}{F}$ where $K=F(\zeta_6)$ and $\ord_\frakp(\beta)=0$: if $2 \mid c$ then $\beta$ is a unit, so $(K,\beta)_{\frakp}=1$; if $2 \nmid c$ then one obtains a contradiction from $\lambda_c \equiv 1 \pmod{\frakp}$.

A similar argument \cite[Proposition 2]{Takeuchi2} shows that
\[ A \cong \quat{\lambda_{2b}^2(\lambda_{2b}^2-4), \lambda_{2b}^2\lambda_{2c}^2 \beta}{E}. \]
For any prime $\frakp$ of $E$ which is unramified in $A$, we can repeat the above construction, and we obtain a homomorphism $\phi$ as in (\ref{pglyeah}); the image can be analyzed by considering the isomorphism class of the local order $\Lambda_\frakp$, measured in part by the divisibility of $\beta$ by $\frakp$.
\end{rmk}

\section{Weak rigidity}

In this section, we investigate some weak forms of rigidity and rationality for Galois covers of $\PP^1$.  We refer to work of Coombes and Harbater \cite{CoombesHarbater}, Malle and Matzat \cite{MalleMatzat}, Serre \cite[Chapters 7--8]{TGT}, and Volklein \cite{Volklein} for references.  Our main result concerns three-point covers, but we begin by briefly considering more general covers.

Let $G$ be a finite group.  An $n$-\defi{tuple} for $G$ is a finite sequence $\gunder = (g_1,\ldots,g_n)$ of elements of
$G$ such that $g_1 \cdots g_n = 1$.  In our applications we will take $n = 3$, so we will not emphasize the dependence on $n$, and refer to \emph{tuples}.  A tuple is \defi{generating} if
$\langle g_1,\ldots,g_n \rangle = G$.  Let $\Cunder = (C_1,\ldots,C_n)$ be a finite sequence of conjugacy classes of $G$.  Let $\Sigma(\Cunder)$ be the set of generating tuples $\gunder = (g_1,\ldots,g_n)$ such that $g_i \in C_i$ for all $i$.

The group $\Inn(G) = G/Z(G)$ of inner automorphims of $G$ acts on $G^n$ via
\[ x \cdot \gunder = x \cdot (g_1,\ldots,g_n) = \gunder^x = (xg_1 x^{-1},\ldots,x g_n x^{-1}) \]
and restricts to an action of $\Inn(G)$ on $\Cunder$.

Suppose that $G$ has trivial center, so $\Inn(G) = G$.  To avoid trivialities, suppose also that $\Sigma(\Cunder) \neq \emptyset$.  Then the action of $\Inn(G)$ on $\Sigma(\Cunder)$ has no fixed points: if $z \in G$ fixes $\gunder$, then $z$ commutes with each $g_i$ hence with $\langle g_1,\ldots,g_n \rangle = G$, so $z \in Z(G) = \{1\}$.

Now suppose that $n=3$; we call a $3$-tuple a \defi{triple}.  For every generating triple $\gunder$, we obtain from the Riemann Existence Theorem \cite[Thm. 2.13]{Volklein} a $G$-Galois branched covering $X(\gunder) \to \PP^1$ defined over $\Qbar$ with ramification type $\gunder$ over $0,1,\infty$ and Galois group $G$.  Two such covers $f:X(\gunder) \to \PP^1$ and $f':X(\gunder') \to \PP^1$ are \defi{isomorphic} as covers if there exists an isomorphism $h:X(\gunder) \xrightarrow{\sim} X(\gunder')$ such that $f=f'\circ h$; such an isomorphism from $f$ to $f'$ corresponds to an element $\varphi \in \Aut(G)$ such that $\varphi(\gunder)=(\varphi(g_1),\dots,\varphi(g_n))= \gunder'$, and conversely.

We will have need also of a more rigid notion.  A \defi{$G$-Galois branched cover} is a branched cover $f:X \to \PP^1$ equipped with an isomorphism $i:G \xrightarrow{\sim} \Aut(X,f)$.  Two $G$-Galois branched covers $(f,i)$ and $(f',i')$ are \defi{isomorphic} (as $G$-Galois branched covers) if and only if there is an isomorphism from $f$ to $f'$ that maps $i$ to $i'$; such an isomorphism corresponds to an element $x \in G$ such that
\[ \gunder^x = (\gunder')^x \]
and conversely.

The group $\Gal(\Qbar/\Q)$ acts on the set of generating tuples for $G$ up to automorphism (or simply inner automorphism) via its action on the covers.  Coming to grips with the mysteries of this action in general is part of Grothendieck's program of \emph{dessin d'enfants} \cite{Grothendieck}: to understand $\Gal(\Qbar/\Q)$ via its faithful action on the fundamental group of $\PP^1_{\overline{\Q}} \setminus \{0,1,\infty\}$.  There is one part of the action which is understood, coming from the maximal abelian extension of $\Q$ generated by roots of unity.

Let $\zeta_s=\exp(2\pi i/s) \in \C$ be a primitive $s$th root of unity for $s \in \Z_{\geq 2}$.  The group $\Gal(\Q^{\textup{ab}}/\Q)$ acts on tuples via the cyclotomic character $\chi$: for $\sigma \in \Gal(\Q^{\textup{ab}}/\Q)$ and a triple $\gunder$, we have $\sigma \cdot \gunder$ is uniformly conjugate to $(g_1^{\chi(\sigma)},\dots,g_n^{\chi(\sigma)})$ where if $g_i$ has order $m_i$ then  $g_i^{\chi(\sigma)}$ is conjugate to $g_i^{a_i}$, where $\sigma(\zeta_{m_i})=\zeta_{m_i}^{a_i}$.  This action becomes an action on conjugacy classes in purely group theoretic language as follows.  Let $m$ be the exponent of $G$.  Then the group $(\Z/m\Z)^{\times}$ acts on $G$ by $s \cdot g = g^s$ for $s \in (\Z/m\Z)^\times$ and $g \in G$ and this induces an action on conjugacy classes.  Pulling back by
the canonical isomorphism $\Gal(\Q(\zeta_m)/\Q) \xrightarrow{\sim} (\Z/m \Z)^{\times}$ defines the action of
$\Gal(\Q(\zeta_m)/\Q)$ and hence also $\Gal(\Q^{\textup{ab}}/\Q)$ on the set of triples for $G$.

Let $H\sbrat \subseteq \Gal(\Q(\zeta_m)/\Q)$ be the kernel of this action:
\[ H\sbrat = \{ s \in (\Z/m\Z)^\times : C^s=C \text{ for all conjugacy classes $C$}\}. \]
The fixed field $F\sbrat(G) = \Q(\zeta_m)^{H\sbrat}$ is called the \defi{field of rationality} of $G$.  The field $F\sbrat(G)$ can also be
characterized as the field obtained by adjoining to $\Q$ the values of the character table of $G$.
Let
\[ H(\Cunder)=\{s \in (\Z/m\Z)^{\times} : \text{ $C_i^{s} = C_i$ for all $i$}\} \]
be the stabilizer of $\Cunder$ under this action.  We define the \defi{field of rationality} of $\Cunder$ to be
\[ F\sbrat(\Cunder) =  \Q(\zeta_m)^{H(\Cunder)}. \]
Similarly, let
\[ H\sbwk(\Cunder) = \{s \in (\Z/m\Z)^{\times} : \Cunder^s=\varphi(\Cunder) \text{ for some $\varphi \in \Aut(G)$}\}. \]
We define the \defi{field of weak rationality} of $\Cunder$ to be $F\sbwk(\Cunder) = \Q(\zeta_m)^{H\sbwk(\Cunder)}$.  Then \[F\sbwk(\Cunder) \subseteq F\sbrat(\Cunder) \subseteq F\sbrat(G). \]

The group $\Gal(\Qbar/F\sbwk(\Cunder))$ acts on the set of generating tuples $\gunder \in \Sigma(\Cunder)$ up to uniform automorphism, which we denote $\Sigma(\Cunder)/\!\Aut(G)$.  For $\gunder \in \Sigma(\Cunder)$, the cover $f:X=X(\gunder) \to \PP^1$ has field of moduli $M(X,f)$ equal to the fixed field of the kernel of this action, a number field of degree at most  $d\sbwk=\# \Sigma(\Cunder)/\!\Aut(G)$ over $F\sbwk(\Cunder)$.

Similarly, a $G$-Galois branched cover $f:X \to \PP^1$ (equipped with its isomorphism $i:G \xrightarrow{\sim} \Aut(X,f)$) has field of moduli $M(X,f,G)$ equal to the fixed field of the stabilizer of the action of $\Gal(\Qbar/F\sbrat(\Cunder))$, a number field of degree $\leq d\sbrat = \#\Sigma(\Cunder)/\Inn(G)$ over $F\sbrat(\Cunder)$.

Therefore, we have the following diagram of fields.
\[
\xymatrix{
  &        & M(X,f,G) \ar@{-}[dd]^{\leq d\sbrat=\#\Sigma(\Cunder)/\Inn(G)} \ar@{-}[dl] \\
&M(X,f) \ar@{-}[dd]_{\leq d\sbwk=\#\Sigma(\Cunder)/\!\Aut(G)} & \\
&& F\sbrat(\Cunder) \ar@{-}[dl] \\
&F\sbwk(\Cunder) \ar@{-}[d] \\
&\Q
} \]

The simplest case of this setup is as follows.  We say that $\Cunder$ is \defi{rigid} if the action of $\Inn(G)$ on $\Sigma(\Cunder)$ is transitive.  By the above, if $\Sigma(\Cunder)$ is rigid then this action is simply transitive and so endows $\Sigma(\Cunder)$ with the structure of a torsor under $G = \Inn(G)$.  In this case, the diagram collapses to
\[ M(X,f,G)=F\sbrat(\Cunder) \supseteq M(X,f)=F\sbwk(\Cunder). \]
More generally, we say that $\Cunder$ is \defi{weakly rigid} if for all $\gunder, \gunder' \in
\Sigma(\Cunder)$ there exists $\varphi \in \Aut(G)$ such that $\varphi(\gunder) = \gunder'$.  (Coombes and Harbater \cite{CoombesHarbater} say \defi{inner rigid} and \defi{outer rigid} for rigid and weakly rigid, respectively.)  If $\underline{C}$ is weakly rigid, and $X=X(\underline{g})$ with $\underline{g} \in \underline{C}$, then $M(X,f)=F\sbwk(\Cunder)$ and the group $\Gal(M(X,f,G)/F\sbrat)$ injects canonically into the outer automorphism group $\Out(G) = \Aut(G)/\Inn(G)$.

We summarize the above discussion in the following proposition.

\begin{prop} \label{WRWR}
Let $G$ be a group with trivial center.  Let $\gunder=(g_1,\dots,g_n)$ be a generating tuple for $G$ and let $\Cunder=(C_1,\dots,C_n)$, where $C_i$ is the conjugacy class of $g_i$.  Let $P_1,\dots,P_n \in \PP^1(\overline{\Q})$.  Then the following statements hold.
\begin{enumalph}
\item There exists a branched covering $f:X \to \PP^1$ with ramification type $\Cunder=(C_1,\dots,C_n)$ over the points $P_1,\dots,P_n$
and an isomorphism $G \xrightarrow{\sim} \Aut(X,f)$, all defined over $\Qbar$.
\item The field of moduli $M(X,f)$ of $f$ is a number field of degree at most
\[ d\sbwk=\#\Sigma(\Cunder)/\!\Aut(G) \]
over $F\sbwk(\Cunder)$.
\item The field of moduli $M(X,f,G)$ of $f$ as a $G$-Galois branched cover is a number field of degree at most
\[ d\sbrat=\#\Sigma(\Cunder)/\Inn(G) \]
over $F\sbrat(\Cunder)$.
\end{enumalph}
\end{prop}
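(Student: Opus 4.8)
The plan is to read off all three statements from the discussion preceding the proposition; the only genuine inputs are the Riemann Existence Theorem for (a) and the branch-cycle description of the $\Gal(\Qbar/\Q)$-action for (b) and (c). For (a), given the generating tuple $\gunder$ and the points $P_1,\dots,P_n \in \PP^1(\Qbar)$, the Riemann Existence Theorem in the form of \cite[Thm.~2.13]{Volklein} (equivalently, Riemann's existence theorem over $\C$ followed by Weil descent for covers with $\Qbar$-rational branch locus) produces a $G$-Galois branched covering $f\colon X(\gunder)\to\PP^1$ defined over $\Qbar$, branched over $P_1,\dots,P_n$ with branch-cycle data $\gunder$, and the tuple $\gunder$ furnishes the isomorphism $i\colon G\xrightarrow{\sim}\Aut(X,f)$. (When $n=3$ one may first move $(P_1,P_2,P_3)$ to $(0,1,\infty)$ by a linear fractional transformation, as $\PGL_2$ acts simply transitively on ordered triples in $\PP^1$.)

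For (b) and (c), apply the branch-cycle argument recalled above: for $\sigma\in\Gal(\Qbar/\Q)$ the conjugate cover $f^\sigma$ is a $G$-Galois cover branched over $\sigma(P_1),\dots,\sigma(P_n)$ whose branch-cycle data is obtained from $\gunder$ by raising each $g_i$ to the $\chi(\sigma)$-th power up to conjugacy (and reindexing by the induced permutation of the $P_i$). Hence, for $\sigma$ fixing $F\sbwk(\Cunder)$ the twisted tuple again lies in $\Sigma(\Cunder)$ up to $\Aut(G)$, giving a well-defined action of $\Gal(\Qbar/F\sbwk(\Cunder))$ on the finite set $\Sigma(\Cunder)/\!\Aut(G)$, and likewise an action of $\Gal(\Qbar/F\sbrat(\Cunder))$ on $\Sigma(\Cunder)/\Inn(G)$. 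By definition of the field of moduli, $M(X,f)$ is the fixed field of the stabilizer of the class of $\gunder$ in $\Sigma(\Cunder)/\!\Aut(G)$ and $M(X,f,G)$ is the fixed field of the stabilizer of its class in $\Sigma(\Cunder)/\Inn(G)$; both are number fields since these sets are finite. The orbit-stabilizer theorem then gives $[M(X,f):F\sbwk(\Cunder)]\le\#\bigl(\Sigma(\Cunder)/\!\Aut(G)\bigr)=d\sbwk$ and $[M(X,f,G):F\sbrat(\Cunder)]\le\#\bigl(\Sigma(\Cunder)/\Inn(G)\bigr)=d\sbrat$, which are (b) and (c).

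The step needing the most care is the bookkeeping in the branch-cycle argument: one must verify that the cyclotomic twist preserves $\Sigma(\Cunder)$ modulo $\Aut(G)$ (resp.\ $\Inn(G)$) precisely for $\sigma\in\Gal(\Qbar/F\sbwk(\Cunder))$ (resp.\ $\Gal(\Qbar/F\sbrat(\Cunder))$), and that ``isomorphism of covers'' (resp.\ ``of $G$-Galois covers'') with the branch points held fixed corresponds exactly to $\Aut(G)$- (resp.\ $\Inn(G)$-) equivalence of tuples. These compatibilities are standard; see Coombes--Harbater \cite{CoombesHarbater}, Malle--Matzat \cite{MalleMatzat}, Serre \cite[Ch.~7--8]{TGT}, and V\"olklein \cite{Volklein}. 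Granting them, the proposition follows by reading the degree bounds off orbit-stabilizer.
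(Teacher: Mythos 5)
Your proposal is correct and follows essentially the same route as the paper: the paper gives no separate proof, stating the proposition as a summary of the immediately preceding discussion, which consists precisely of the Riemann Existence Theorem for (a) and the cyclotomic/branch-cycle action on tuples together with orbit-stabilizer for (b) and (c). (If anything, your phrasing via the stabilizer of the class of $\gunder$ is slightly more precise than the paper's "kernel of this action" for extracting the degree bounds.)
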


\section{Conjugacy classes, fields of rationality} \label{sec:conjclass}

Let $p$ be a prime number and $q = p^r$ a prime power.  Let $\F_q$ be a field with $q$ elements and algebraic closure
$\overline{\F}_q$.  In this section, we record some basic but crucial
facts concerning conjugacy classes and automorphisms in the finite matrix groups arising from $\GL_2(\F_q)$; see Huppert \cite[\S II.8]{Huppert} for a reference.

First let $g \in \GL_2(\F_q)$.  By the Jordan canonical form, exactly one of the following holds:
\begin{enumerate}
\item The characteristic polynomial $f(g;T) \in \F_q[T]$ has two repeated roots (in $\F_q$), and hence $g$ is either a scalar matrix (central in $\GL_2(\F_q)$) or $g$ is conjugate to a matrix of the form $\begin{pmatrix} t & 1 \\ 0 & t \end{pmatrix}$ with $t \in \F_q^\times$; or
\item $f(g;T)$ has distinct roots (in $\overline{\F}_q$) and the conjugacy class of $g$ is uniquely determined by $f(g;T)$, and we say $g$ is \defi{semisimple}.
\end{enumerate}

Let $\PGL_2(\F_q)=\GL_2(\F_q)/\F_q^\times$ and let $\overline{g}$ be the image of $g$ under the natural reduction map $\GL_2(\F_q) \rightarrow \PGL_2(\F_q)$.  We have $\overline{g}=\overline{1}$ iff $g$ is a scalar matrix.  If $g$ is conjugate to $\begin{pmatrix} t & 1 \\ 0 & t \end{pmatrix}$, then $\overline{g}$ is conjugate to $\begin{pmatrix} 1 & 1 \\ 0 & 1 \end{pmatrix}$, and we say that $\overline{g}$ is \defi{unipotent}.  If $f(g;T)$ is semisimple, then in the quotient the conjugacy classes associated to $f(g;T)$ and $f(cg;T)=c^2f(g;c^{-1}T)$ for $c \in \F_q^\times$ become identified.  If $f(g;T)$ factors over $\F_q$ then $\overline{g}$ is conjugate in $\PGL_2(\F_q)$ to the image of a matrix $\begin{pmatrix} 1 & 0 \\ 0 & x \end{pmatrix}$ with $x \in \F_q^\times \setminus \{1\}$, and we say that $\overline{g}$ is \defi{split (semisimple)}.  The set of split semisimple conjugacy classes in $\PGL_2(\F_q)$ is therefore in bijection with the set
\begin{equation} \label{eqn:conjxxinv}
\{\{x,x^{-1}\} : x \in \F_q^\times \setminus \{1\}\}.
\end{equation}
There are $(q-3)/2+1=(q-1)/2$ such classes if $q$ is odd, and $(q-2)/2=q/2-1$ such classes if $q$ is even.  On the other hand, the conjugacy classes of semisimple elements with irreducible $f(g;T)$ are in bijection with the set of monic, irreducible polynomials $f(T) \in \F_q[T]$ of degree $2$ up to rescaling $x \mapsto ax$ with $a \in \F_q^\times$.  There are $q(q-1)/2$ such monic irreducible quadratic polynomials $T^2-aT+b$; for any such polynomial with $a \neq 0$, there is a unique rescaling such that $a=1$; when $q$ is odd, there is a unique such polynomial with $a=0$ up to rescaling.  Therefore, the total number of conjugacy classes is $(q(q-1)/2)/(q-1)=q/2$ when $q$ is even and $\left(q(q-1)/2-(q-1)/2\right)/(q-1)+1=(q-1)/2$ when $q$ is odd.  Equivalently, the set of nonsplit semisimple conjugacy classes in $\PGL_2(\F_q)$ is in bijection with the set
\begin{equation} \label{eqn:conjyyq}
\{\{y,y^q\} : y \in (\F_{q^2} \setminus \F_q)/\F_q^\times\}
\end{equation}
by taking roots.

Now let $g \in \SL_2(\F_q) \setminus $ with $g \neq \pm 1$.  Suppose first that $f(g;T)$ has a repeated root, necessarily $\pm 1$; then we say that $g$ is \defi{unipotent}.  For $u \in \F_q$, let $U(u) =  \begin{pmatrix} 1 & u \\ 0 & 1 \end{pmatrix}$.  Using Jordan canonical form we find that $g$ is conjugate to $\pm U(u)$ for some $u \in \F_q^{\times}$.  The matrices $U(u)$ and $U(v)$ are conjugate if and only if $uv^{-1} \in \F_q^{\times 2}$.
Thus, if $q$ is odd there are four nontrivial conjugacy classes associated to characteristic polynomials with repeated roots, whereas is $q$ is even there is a single such conjugacy class.

Otherwise the element $g$ is semisimple and so $g$ is conjugate in $\SL_2(\F_q)$ to the matrix $\begin{pmatrix} 0 & -1 \\ 1 & \tr(g) \end{pmatrix}$ by rational canonical form, and the trace map provides a bijection between the set of conjugacy classes of semisimple elements of $\SL_2(\F_q)$ and elements $\alpha \in \F_q$ with $\alpha \neq \pm 2$.

Finally, we give the corresponding description in $\PSL_2(\F_q) = \SL_2(\F_q)/\{\pm 1\}$.  When $p=2$ we have $\PSL_2(\F_q) = \SL_2(\F_q)$, so assume that $p$ is odd.  Then the conjugacy classes of the matrices $U(u)$ and $-U(u)$ in $\SL_2(\F_q)$ become identified in $\PSL_2(\F_q)$, so there are precisely two nontrivial \defi{unipotent} conjugacy classes, each consisting of elements of order $p$.  If $g$ is a semisimple element of $\SL_2(\F_q)$ of order $a$, then the order of its image $\pm g$ in $\PSL_2(\F_q)$ is $a/\!\gcd(a,2)$.  We define the \defi{trace} of an element $\pm g \in \PSL_2(\F_q)$ to be $\tr(\pm g)=\{\tr(g),-\tr(g) \} \subseteq \F_q$ and define the \defi{trace field} of $\pm g$ to be $\F_p(\tr(\pm g))$.  The conjugacy class, and therefore the order, of a semisimple element of $\PSL_2(\F_q)$ is then again uniquely determined by its trace.  (This is particular to $\PSL_2(\F_q)$---the trace does not determine a conjugacy class in $\PGL_2(\F_q)$!)

We now describe outer automorphism groups (see e.g.\ Suzuki \cite{Suzuki}).  The $p$-power Frobenius map $\sigma$, acting on the entries of a matrix by $a \mapsto a^p$, gives an outer automorphism of $\PSL_2(\F_q)$ and $\PGL_2(\F_q)$, and in fact $\Out(\PGL_2(\F_q)) = \langle \sigma \rangle$.  When $p$ is odd, the map $\tau$ given by conjugation by an element in $\PGL_2(\F_q) \setminus \PSL_2(\F_q)$ is also an outer automorphism of $\PSL_2(\F_q)$, and these maps generate $\Out(\PSL_2(\F_q))$:
\begin{equation} \label{outpsl2fq}
\Out(\PSL_2(\F_q)) \cong
\begin{cases}
\la \sigma,\tau \ra, &\text{ if $p$ is odd;} \\
\la \sigma \ra, &\text{ if $p=2$}.
\end{cases}
\end{equation}
In particular, the order of $\Out(\PSL_2(\F_q))$ is $2r$ if $p$ is odd and $r$ if $p=2$.  From the  embedding $\PGL_2(\F_q) \hookrightarrow \PSL_2(\F_{q^2})$, given explicitly by $\pm g \mapsto \pm(\det g)^{-1/2} g$, we may also view the outer automorphism $\tau$ as conjugation by an element of $\PSL_2(\F_{q^2}) \setminus \PSL_2(\F_q)$.

We conclude this section by describing the field of rationality (as defined in \S 5) for these conjugacy classes.

\begin{lem} \label{PGL2fieldofrat}
Let $\overline{g} \in \PGL_2(\F_q)$ have order $m$.  Then the field of rationality of the conjugacy class $C$ of $\overline{g}$ is
\[
F\sbrat(C)=
\begin{cases}
\Q(\lambda_m), & \text{if $\overline{g}$ is semisimple}; \\
\Q, & \text{if $\overline{g}$ is unipotent};
\end{cases} \]
and the field of weak rationality of $C$ is
\[
F\sbwk(C)=
\begin{cases}
\Q(\lambda_m)^{\la \Frob_p \ra}, & \text{if $\overline{g}$ is semisimple}; \\
\Q, & \text{if $\overline{g}$ is unipotent}.
\end{cases} \]
\end{lem}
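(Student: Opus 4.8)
The plan is to compute, directly from the conjugacy-class classification of $\PGL_2(\F_q)$ recorded above, the two stabilizer subgroups that control $F\sbrat(C)$ and $F\sbwk(C)$. Write $m$ for the order of $\overline{g}$. Since $\overline{g}^{\,s}$ depends only on $s$ modulo $m$, the stabilizer $H(C)$ inside $(\Z/M\Z)^\times$ (with $M$ the exponent of $G=\PGL_2(\F_q)$) is the full preimage of
\[ H'(C)=\{s\in(\Z/m\Z)^\times : \overline{g}^{\,s}\text{ is conjugate to }\overline{g}\}, \]
and therefore $F\sbrat(C)=\Q(\zeta_M)^{H(C)}=\Q(\zeta_m)^{H'(C)}$ via the standard isomorphism $\Gal(\Q(\zeta_m)/\Q)\cong(\Z/m\Z)^\times$. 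To handle the weak version I would first record, using \eqref{outpsl2fq}, that $\Out(\PGL_2(\F_q))=\la\sigma\ra$ is generated by the $p$-power Frobenius $\sigma$, and that a one-line check on the normal forms below shows $\sigma^j(C)=C^{\,p^j}$; thus $H\sbwk(C)=\{s : \overline{g}^{\,s}\text{ is conjugate to }\overline{g}^{\,p^j}\text{ for some }j\ge 0\}$ and $F\sbwk(C)=\Q(\zeta_m)^{H\sbwk(C)}$.

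Next I would dispose of the unipotent case. Here $\overline{g}$ is conjugate to $\begin{pmatrix}1&1\\0&1\end{pmatrix}$, which has order $m=p$, and conjugating by a diagonal matrix shows that $\begin{pmatrix}1&s\\0&1\end{pmatrix}$ is conjugate to $\begin{pmatrix}1&1\\0&1\end{pmatrix}$ in $\PGL_2(\F_q)$ for every $s\in\F_q^\times$. Hence $H'(C)=(\Z/p\Z)^\times$, and since $H\sbwk(C)\supseteq H'(C)$ also $H\sbwk(C)=(\Z/p\Z)^\times$, so $F\sbrat(C)=F\sbwk(C)=\Q(\zeta_p)^{(\Z/p\Z)^\times}=\Q$.

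For the semisimple case, where automatically $p\nmid m$, I would treat the split and nonsplit subcases. If $\overline{g}$ is split, its class corresponds by \eqref{eqn:conjxxinv} to $\{x,x^{-1}\}$ with $x\in\F_q^\times$ of order $m$, and $\overline{g}^{\,s}$ corresponds to $\{x^s,x^{-s}\}$, so $\overline{g}^{\,s}$ is conjugate to $\overline{g}^{\,t}$ precisely when $s\equiv\pm t\psmod{m}$. If $\overline{g}$ is nonsplit, its class corresponds by \eqref{eqn:conjyyq} to $\{y,y^q\}$ with $y\in\F_{q^2}^\times/\F_q^\times$; I would then observe that $\{n\in\Z:y^n\in\F_q^\times\}=m\Z$ (this set is exactly the order subgroup of $\overline{g}$) and that $y^{q+1}$ is the $\F_{q^2}/\F_q$-norm of $y$, hence lies in $\F_q^\times$, so $m\mid q+1$ and $q\equiv-1\psmod{m}$; it follows that $\overline{g}^{\,s}$ is conjugate to $\overline{g}^{\,t}$ iff $y^{s-t}\in\F_q^\times$ or $y^{s-qt}\in\F_q^\times$, iff $s\equiv t$ or $s\equiv qt\equiv -t\psmod{m}$, i.e.\ again iff $s\equiv\pm t\psmod{m}$. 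In both subcases $H'(C)=\{\pm1\}$, so
\[ F\sbrat(C)=\Q(\zeta_m)^{\{\pm1\}}=\Q(\zeta_m+\zeta_m^{-1})=\Q(\lambda_m), \]
while $H\sbwk(C)=\la-1,p\ra\subseteq(\Z/m\Z)^\times$, whose fixed field in $\Q(\zeta_m)$ is the fixed field of $\Frob_p$ in $\Q(\lambda_m)$, giving $F\sbwk(C)=\Q(\lambda_m)^{\la\Frob_p\ra}$.

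The step I expect to be the main obstacle is the nonsplit semisimple case: the power action on the unordered pair $\{y,y^q\}$ a priori behaves well only modulo the order of $y$ in $\F_{q^2}^\times$, and the real content is pinning it down modulo $m=\ord(\overline{g})$. The identity $\{n:y^n\in\F_q^\times\}=m\Z$, together with the congruence $q\equiv-1\psmod{m}$ that it forces, is the crux; with these in hand everything else is routine bookkeeping, including the descent from $\Q(\zeta_M)$ to $\Q(\zeta_m)$ and the verification that $p$ is a unit modulo $m$ in the semisimple case so that $\Frob_p\in\Gal(\Q(\lambda_m)/\Q)$ is well defined.
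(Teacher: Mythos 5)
Your proposal is correct and follows essentially the same route as the paper's proof: classify $C$ as unipotent, split semisimple, or nonsplit semisimple, and compute the stabilizer of the power action directly on the normal forms \eqref{eqn:conjxxinv} and \eqref{eqn:conjyyq}, finding $\la -1\ra$ (resp.\ $\la -1,p\ra$) in the semisimple case. Your treatment of the nonsplit case via $m \mid q+1$ and $q\equiv -1\psmod{m}$ is just a slightly more explicit version of the paper's observation that $\F_q^\times$ sits as the index-$(q+1)$ subgroup of $\F_{q^2}^\times$.
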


\begin{proof}
A power of a unipotent conjugacy class is unipotent or trivial so its field of rationality and weak rationality is $\Q$.

If $C$ is split semisimple, corresponding to $\{x,x^{-1}\}$ by \eqref{eqn:conjxxinv} with $x \in \F_q^\times \setminus \{1\}$ then $\overline{g} \mapsto \overline{g}^s$ for $s \in (\Z/m\Z)^\times$ corresponds to the map $x \mapsto x^s$ and it stabilizes the set $\{x,x^{-1}\}$ (resp.\ up to an automorphism of $\PGL_2(\F_q)$) if and only if $s \in \la -1 \ra \subseteq (\Z/m\Z)^\times$ (resp.\  $s \in \la -1,p \ra$).

Next consider the case where $C$ is nonsplit semisimple, corresponding to $\{y,y^q\}$ by \eqref{eqn:conjyyq} with $y \in (\F_{q^2} \setminus \F_q)/\F_q^\times$.  Then the map $\overline{g} \mapsto g^s$ again with $s \in (\Z/m\Z)^\times$ corresponds to the map $y \mapsto y^s$.  We have $y \in \F_{q^2}^\times \cong \Z/(q^2-1)\Z$, with the image of $\F_q^\times$ the subgroup $(q+1)\Z/(q^2-1)\Z$, so the set $\{y,y^q\}$ is stable if and only if $s \in \{1,q\} = \langle -1 \rangle \pmod{m}$, and the set is stable up to an automorphism of $\PGL_2(\F_q)$ if and only if $s \in \la -1,p \ra \subset \Z/m\Z$, so we have the same result as in the split case.
\end{proof}

For an odd prime $p$, we abbreviate $p^*=(-1)^{(p-1)/2} p$.  Recall that $q=p^r$.

\begin{lem} \label{PSL2fieldofrat}
Let $\pm g \in \PSL_2(\F_q)$ have order $m$.  Then the field of rationality of the conjugacy class $C$ of $g$ is
\[
F\sbrat(C)=
\begin{cases}
\Q(\lambda_m), & \text{if $g$ is semisimple}; \\
\Q(\sqrt{p^*}), & \text{if $g$ is unipotent and $pr$ is odd}; \\
\Q, & \text{otherwise}.
\end{cases} \]
The field of weak rationality of $C$ is
\[
F\sbwk(C)=
\begin{cases}
\Q(\lambda_m)^{\la \Frob_p \ra}, & \text{if $g$ is semisimple}; \\
\Q, & \text{otherwise},
\end{cases} \]
where $\Frob_p \in \Gal(\Q(\lambda_m)/\Q)$ is the Frobenius element associated to the prime $p$.
\end{lem}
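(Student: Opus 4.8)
The plan is to compute, for the conjugacy class $C$ of a nontrivial element $\pm g \in \PSL_2(\F_q)$ of order $m$, the two stabilizer subgroups of $(\Z/m\Z)^\times$ appearing in the definitions of \S 5,
\[ H(C) = \{\, s : C^s = C \,\}, \qquad H\sbwk(C) = \{\, s : C^s = \varphi(C) \text{ for some } \varphi \in \Aut(\PSL_2(\F_q)) \,\}, \]
and then read off $F\sbrat(C) = \Q(\zeta_m)^{H(C)}$ and $F\sbwk(C) = \Q(\zeta_m)^{H\sbwk(C)}$, mirroring the proof of Lemma \ref{PGL2fieldofrat}. I will use freely the facts recalled above: a nontrivial element of $\PSL_2(\F_q)$ is either semisimple, in which case its class is determined by $\tr(\pm g) = \{\tr g, -\tr g\}$ and $m$ is prime to $p$, or unipotent, in which case $m = p$ (with two unipotent classes if $p$ is odd, one if $p = 2$); and $\Out(\PSL_2(\F_q))$ is generated by the $p$-power Frobenius $\sigma$ together with, when $p$ is odd, conjugation $\tau$ by an element of $\PGL_2(\F_q) \setminus \PSL_2(\F_q)$.

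First I would treat the semisimple case. Fix a lift $g \in \SL_2(\F_q)$ with eigenvalue $\eta \in \overline{\F}_q^\times$; since $\pm g$ has order $m$ and $\eta \neq \pm 1$, the order $n$ of $\eta$ is $m$ (with $m$ odd) or $2m$ (and then $\eta^m = -1$). As $g^m = \pm 1$, the class $C^s$ depends only on $s \bmod m$, and $\tr(g^s) = \eta^s + \eta^{-s}$. Because $T \mapsto T + T^{-1}$ is two-to-one away from $T = \pm 1$, the condition $C^s = C$ (equivalently $\eta^s + \eta^{-s} = \pm(\eta + \eta^{-1})$) is equivalent to $\eta^s \in \{\eta, \eta^{-1}, -\eta, -\eta^{-1}\}$, which in the case $n = m$ (where $-1 \notin \la \eta \ra$) and in the case $n = 2m$ (where $-1 = \eta^m$) both reduce to $s \equiv \pm 1 \pmod m$. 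Hence $H(C) = \la -1 \ra$ and $F\sbrat(C) = \Q(\zeta_m)^{\la -1 \ra} = \Q(\lambda_m)$. For weak rationality, I would observe that $\sigma$ sends $C$ to $C^p$ (raising the eigenvalue to the $p$-th power), whereas $\tau$ preserves the trace---the $\SL_2$-lift of $\tau(\pm g)$ is $\GL_2$-conjugate to $g$---and so fixes $C$; therefore $H\sbwk(C) = \la -1, p \ra$, and $F\sbwk(C) = \Q(\zeta_m)^{\la -1, p \ra} = \Q(\lambda_m)^{\la \Frob_p \ra}$, since $\la -1 \ra$ is exactly the subgroup fixing $\lambda_m$ and the image of $p$ in $\Gal(\Q(\lambda_m)/\Q)$ is $\Frob_p$.

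Next I would treat the unipotent case, where $m = p$. If $p = 2$ there is a unique unipotent class, so $H(C) = H\sbwk(C)$ is trivial and $F\sbrat(C) = F\sbwk(C) = \Q$. If $p$ is odd, write $C$ for the class of $\pm U(u)$: it is determined by the square class of $u$ in $\F_q^\times / \F_q^{\times 2}$, and raising to the power $s \in \F_p^\times = (\Z/p\Z)^\times$ replaces $u$ by $su$, so $C^s = C$ iff $s$ is a square in $\F_q^\times$. Since $\F_p^\times$ has index $(q-1)/(p-1) = 1 + p + \cdots + p^{r-1} \equiv r \pmod 2$ in the cyclic group $\F_q^\times$, if $r$ is even every element of $\F_p^\times$ is a square in $\F_q^\times$, giving $H(C) = (\Z/p\Z)^\times$ and $F\sbrat(C) = \Q$; if $r$ is odd the squares in $\F_p^\times$ are exactly $(\F_p^\times)^2$, giving $H(C) = (\F_p^\times)^2$ and $F\sbrat(C) = \Q(\zeta_p)^{(\F_p^\times)^2}$, the unique quadratic subfield of $\Q(\zeta_p)$, namely $\Q(\sqrt{p^*})$. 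For weak rationality, I would note that $\tau$, which up to inner automorphism is conjugation by $\mathrm{diag}(\epsilon, 1)$ with $\epsilon$ a nonsquare, replaces $u$ by $\epsilon u$ and hence interchanges the two unipotent classes; thus $C^s = \varphi(C)$ is solvable for every $s$, so $H\sbwk(C) = (\Z/p\Z)^\times$ and $F\sbwk(C) = \Q$. Combining the two cases yields the statement.

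The step I expect to demand the most care---and the main obstacle---is the sign bookkeeping in the semisimple case: moving correctly between $g \in \SL_2(\F_q)$ and its image in $\PSL_2(\F_q)$, checking that the power action is well defined only modulo $m$ (not modulo the order $n \in \{m, 2m\}$ of the eigenvalue), and verifying that $\tau$ genuinely fixes every semisimple class, so that $\Frob_p$ alone governs weak rationality exactly as in Lemma \ref{PGL2fieldofrat}. The remaining computations---with cyclotomic Galois groups, square classes in $\F_q^\times$, and the quadratic subfield of $\Q(\zeta_p)$---are routine.
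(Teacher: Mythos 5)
Your proposal is correct and follows essentially the same route as the paper: compute the stabilizers $H(C)$ and $H\sbwk(C)$ directly from the classification of unipotent and semisimple classes in \S 6, using that a semisimple class is determined by its trace (so powering acts through $\pm(z+1/z)\mapsto\pm(z^s+1/z^s)$, giving stabilizer $\la -1\ra$, enlarged to $\la -1,p\ra$ by $\sigma$ since $\tau$ fixes semisimple classes), and that for $p$ odd the two unipotent classes are indexed by the square class of $u$ in $\F_q^\times$ (so the stabilizer is the set of $s\in\F_p^\times$ that are squares in $\F_q^\times$, with $\tau$ interchanging the two classes for weak rationality). Your extra bookkeeping with the eigenvalue order $n\in\{m,2m\}$ and the index computation $(q-1)/(p-1)\equiv r\pmod 2$ just makes explicit steps the paper's proof leaves implicit.
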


\begin{proof}
First, suppose $\pm g = \pm U(u)$ is unipotent with $u \in \F_q^{\times}$.  Then for all
integers $s$ prime to $p$, we have $(\pm g)^s=\pm U(su)$.  Thus, the subgroup of $(\Z/p\Z)^{\times} = \F_p^{\times}$ stabilizing $C$ is precisely the set of elements of $\F_p^{\times}$ which are squares in $\F_q^\times$.  Thus if $p = 2$ or $r$ is even, this subgroup is all of $\F_p^{\times}$ so that the field of rationality of $C$ is $\Q$, whereas if $pr$ is odd this subgroup is the unique index two subgroup of $\F_p^{\times}$ and the corresponding field of rationality for $C$ in $\PSL_2(\F_q)$ is $\Q(\sqrt{p^*})$.

Next we consider semisimple conjugacy classes.  By the trace map, these classes are in bijection with $\pm t \in \pm\F_q \setminus \{\pm 2\}$.  The induced action on the set of traces is given by $\pm t = \pm(z+1/z) \mapsto \pm(z^s+1/z^s)$ for $s \in (\Z/m\Z)^\times$ where $z$ is a primitive $m$th root of unity.  From this description, we see that the stabilizer is $\la -1 \ra \subseteq (\Z/m\Z)^\times$.

A similar analysis yields the field of weak rationality.  If $C$ is unipotent then $\tau$ identifies the two unipotent conjugacy classes so the field of weak rationality is always $\Q$.  If $C$ is semisimple then $\sigma$ identifies $C$ with $C^p$ so the stabilizer of $ \pm t$ is $\la -1, p \ra \subseteq (\Z/m\Z)^\times$, the field fixed further under the Frobenius $\Frob_p$.
\end{proof}

\section{Subgroups of $\PSL_2(\F_q)$ and $\PGL_2(\F_q)$ and weak rigidity}

The general theory developed for triples in $\S 5$ can be further applied to the groups $\PSL_2(\F_q)$ (and consequently $\PGL_2(\F_q)$) using work of Macbeath \cite{Macbeath}, which we recall in this section.  See also Langer and Rosenberger \cite{LangerRosenberger}, who give an exposition of Macbeath's work in our context.

Let $q$ be a prime power.  We begin by considering triples $\gunder=(g_1,g_2,g_3)$ with $g_i \in \SL_2(\F_q)$ -- we remind the reader that
the terminology implies $g_1 g_2 g_3 = 1$ and does not imply that $\langle g_1,g_2,g_3 \rangle = \SL_2(\F_q)$ --
 with an eye to understanding the image of the subgroup generated by $g_1,g_2,g_3$ in $\PSL_2(\F_q)$ according to the traces of the corresponding elements.  Long periods of consternation have taught us that the difference between a matrix and a matrix up to sign plays an important role here, and so we keep this in our notation.  Moreover, because we will be considering other kinds of triples, we refer to $\gunder=(g_1,g_2,g_3)$ as a \defi{group triple}.

A \defi{trace triple} is a triple $\tunder=(t_1,t_2,t_3) \in \F_q^3$.  For a trace triple $\tunder$, let $T(\tunder)$ denote the set of group triples $\gunder$ such that $\tr(g_i)=t_i$ for $i=1,2,3$.  The group $\Inn(\SL_2(\F_q)) = \PSL_2(\F_q)$ acts on $T(\tunder)$ by conjugation.

\begin{prop}[{Macbeath \cite[Theorem 1]{Macbeath}}] \label{MACTHM1}
For all trace triples $\tunder$, the set $T(\tunder)$ is nonempty.
\end{prop}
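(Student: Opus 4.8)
The plan is to reduce the statement to realizing an arbitrary triple of traces by a \emph{pair} of matrices, and then to solve a single explicit conic equation over $\F_q$. For any $g_1,g_2 \in \SL_2(\F_q)$, setting $g_3 = (g_1 g_2)^{-1} = g_2^{-1} g_1^{-1}$ produces a group triple $\gunder = (g_1, g_2, g_3)$; and since Cayley--Hamilton gives $M^{-1} = (\tr M) I - M$ for every $M \in \SL_2(\F_q)$, we have $\tr g_3 = \tr(g_1 g_2)$. Thus, given $\tunder = (t_1, t_2, t_3) \in \F_q^3$, it suffices to produce $g_1, g_2 \in \SL_2(\F_q)$ with $\tr g_1 = t_1$, $\tr g_2 = t_2$, and $\tr(g_1 g_2) = t_3$. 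I would first attempt $g_1 = \begin{pmatrix} t_1 & -1 \\ 1 & 0 \end{pmatrix}$ and $g_2 = \begin{pmatrix} x & y \\ z & t_2 - x \end{pmatrix}$; then $\tr(g_1 g_2) = t_1 x + y - z$, so the condition $\tr(g_1 g_2) = t_3$ forces $z = t_1 x + y - t_3$, and substituting this into $\det g_2 = 1$ yields the single affine conic
\[ C_{\tunder}:\qquad x^2 + t_1 x y + y^2 - t_2 x - t_3 y + 1 = 0 \]
in the variables $x, y$. The whole problem is thereby reduced to finding an $\F_q$-point on $C_{\tunder}$ --- or, in the degenerate situations below, to switching to a smarter choice of $g_1$.

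The main case is the one in which the binary quadratic form $Q_0(x,y) = x^2 + t_1 xy + y^2$ is nondegenerate, i.e.\ $t_1^2 \neq 4$ (when $p$ is odd) or $t_1 \neq 0$ (when $p = 2$). Here I would use nondegeneracy of $Q_0$ to translate the linear terms away, reducing $C_{\tunder}$ to an equation of the shape $Q_0(X, Y) = b$ for some $b \in \F_q$, and then invoke the elementary point count for such equations: according as $Q_0$ is split or anisotropic and $b$ is zero or not, the number of solutions over $\F_q$ equals $2q - 1$, $q - 1$, $1$, or $q + 1$, hence is always at least $1$. So $C_{\tunder}(\F_q) \neq \emptyset$, and $T(\tunder) \neq \emptyset$ in this case.

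The degenerate cases require a little bookkeeping, and this is where I expect the only real friction. Suppose $p$ is odd and $t_1 = 2$ (the cases $t_1 = -2$, and $p = 2$ with $t_1 = 0$, are completely parallel with $I$ replaced by $-I$ as appropriate). Then $Q_0 = (x + y)^2$, and the substitution $u = x + y$, $v = y$ turns $C_{\tunder}$ into $u^2 - t_2 u + (t_2 - t_3) v + 1 = 0$. If $t_2 \neq t_3$ this is linear in $v$, so choosing any $u$ and solving for $v$ gives a point of $C_{\tunder}$. If $t_2 = t_3$, it collapses to the univariate quadratic $u^2 - t_2 u + 1 = 0$, which is soluble exactly when $t_2^2 - 4$ is a square; in the sole remaining subcase ($t_2 = t_3$ and $t_2^2 - 4$ a nonsquare) this choice of $g_1$ fails, but then, since $t_1 = 2$ and $t_2 = t_3$, I would instead take $g_1 = I$ and $g_2 = \begin{pmatrix} t_2 & -1 \\ 1 & 0 \end{pmatrix}$, so that $\tr g_1 = 2 = t_1$, $\tr g_2 = t_2$, and $\tr(g_1 g_2) = t_2 = t_3$. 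Sweeping through $t_1 \in \{2, -2\}$ (resp.\ $t_1 = 0$ in characteristic $2$) in this fashion finishes the proof.

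I expect the main obstacle to be precisely this last point: the naive choice of $g_1$ yields a conic $C_{\tunder}$ that in certain trace configurations degenerates to a pair of parallel lines defined over $\F_{q^2}$ but not over $\F_q$, hence with no $\F_q$-point, and one has to observe that in exactly those configurations the trace constraints force one of the three matrices to be $\pm I$, making a trivial choice available. The only genuine care needed is in tracking signs and the characteristic-$2$ coincidence $\pm 2 = 0$.
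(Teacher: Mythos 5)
Your argument is correct. Note that the paper itself offers no proof of this statement---it is quoted directly as \cite[Theorem 1]{Macbeath}---so what you have produced is a self-contained elementary proof of a result the authors only cite. Your reduction is sound: since $\tr(M^{-1})=\tr(M)$ for $M \in \SL_2(\F_q)$, it suffices to find $g_1,g_2$ with $\tr g_1 = t_1$, $\tr g_2 = t_2$, $\tr(g_1g_2)=t_3$, and with $g_1$ in rational canonical form the constraints on $g_2$ cut out exactly the affine conic $C_{\tunder}$ you wrote down (I checked: $\tr(g_1g_2)=t_1x+y-z$, and eliminating $z$ from $\det g_2=1$ gives $x^2+t_1xy+y^2-t_2x-t_3y+1=0$). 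The completion-of-the-square step is legitimate precisely when the Gram matrix $\bigl(\begin{smallmatrix}2 & t_1\\ t_1 & 2\end{smallmatrix}\bigr)$ is invertible, i.e.\ $t_1^2\neq 4$ (resp.\ $t_1\neq 0$ in characteristic $2$), which is exactly your nondegeneracy hypothesis, and the point counts $2q-1$, $q-1$, $1$, $q+1$ for $Q_0(X,Y)=b$ are the standard ones for hyperbolic and anisotropic (norm-form) planes over $\F_q$. Your handling of the degenerate cases is also right, and the key observation---that the only configurations in which $C_{\tunder}(\F_q)=\emptyset$ (two conjugate parallel lines) are precisely those where $t_1=\pm 2$ and $t_3=\pm t_2$ with matching signs, so that one may take $g_1=\pm I$ outright---is exactly what rescues the construction; this uses that $T(\tunder)$ does not require the triple to generate, which the paper is explicit about. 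The one place to be careful in a write-up is to state clearly that the two parallel lines $u=r_1$, $u=r_2$ have no $\F_q$-point only when $t_2^2-4$ is a nonsquare (resp.\ when the relevant Artin--Schreier trace is nonzero in characteristic $2$), and that in every such subcase the scalar choice of $g_1$ is available; you have done this. Macbeath's original proof proceeds along broadly similar lines (fixing one matrix and solving a quadric over $\F_q$ for the other), so your argument can be regarded as a streamlined reconstruction of it rather than a new method.
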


To a group triple $\gunder=(g_1,g_2,g_3) \in \SL_2(\F_q)^3$, we associate the \defi{order triple} $(a,b,c)$ by letting $a$ be the order of $\pm g_1 \in \PSL_2(\F_q)$, and similarly $b$ the order of $\pm g_2$ and $c$ the order of $\pm g_3$.  Without loss of generality, as in the definition of the triangle group (\ref{reorderremark}) we may assume that an order triple $(a,b,c)$ has $a \leq b \leq c$.

Our goal is to give conditions under which we can be assured that a group triple generates $\PSL_2(\F_q)$ or $\PGL_2(\F_q)$ and not a smaller group.  We do this by placing restrictions on the associated trace triples, which come in three kinds.

A trace triple $\tunder$ is \defi{commutative} if there exists $\gunder \in T(\tunder)$ such that the group $\pm \langle g_1, g_2, g_3 \rangle \subseteq \PSL_2(\F_q)$ is commutative.
By a direct calculation, Macbeath proves that a triple $\tunder$ is commutative if and only if the ternary $\F_q$-quadratic form
\[ x^2 + y^2 + z^2 + t_1 yz + t_2 xz + t_3 xy \]
is singular \cite[Corollary 1, p.\ 21]{Macbeath}, i.e.\ if and only if its (half-)discriminant
\begin{equation} \label{commform}
d(\tunder)=d(t_1,t_2,t_3)=t_1^2+t_2^2+t_3^2 - t_1t_2t_3 - 4
\end{equation}
is zero.  If a trace triple $\tunder$ is not commutative, then the order triple $(a,b,c)$ is the same for any $\gunder \in T(\tunder)$: the trace uniquely defines the order of a semisimple or unipotent element, and if some $g_i$ is scalar in $\gunder \in T(\tunder)$ then the group it generates is necessarily commutative.

A trace triple $\tunder$ is \defi{exceptional} if there exists a triple $\gunder \in T(\tunder)$ with order triple equal to $(2,2,c)$ with $c \geq 2$ or one of
\begin{equation} \label{exceptionaltriples}
(2,3,3), (3,3,3), (3,4,4), (2,3,4), (2,5,5), (5,5,5), (3,3,5), (3,5,5), (2,3,5).
\end{equation}
Put another way, a trace triple $\tunder$ is exceptional if there exists $\gunder \in T(\tunder)$ whose order triple is the same as that of a triple of elements of $\SL_2(\F_q)$ that generates a finite spherical triangle group in $\PSL_2(\F_q)$.

Finally, a trace triple $\tunder$ is \defi{projective} if for all $\gunder=(g_1,g_2,g_3) \in T(\tunder)$, the subgroup $\pm \la g_1, g_2, g_3 \ra \subseteq \PSL_2(\F_q)$ is conjugate to a subgroup of the form $\PSL_2(k)$ or $\PGL_2(k)$ for $k \subseteq \F_q$ a subfield.

\begin{rmk}
There are no trace triples which are both projective and commutative.  A projective trace triple may be exceptional, but the possibilities can be explicitly described as follows.  A trace triple is exceptional if there is a homomorphism from a finite spherical group to $\PSL_2(\F_q)$ with order triple as given in (\ref{exceptionaltriples}); a trace triple is projective if the image is conjugate to $\PSL_2(k)$ or $\PGL_2(k)$ for $k \subseteq \F_q$ a subfield.  From the classification of finite spherical groups, this homomorphism must be one of the following exceptional isomorphisms:
\begin{center}
$D_6 \cong \GL_2(\F_2)$, $A_4 \cong \PSL_2(\F_3)$, $S_4 \cong \PGL_2(\F_3)$, or $A_5 \cong \PGL_2(\F_4) \cong \PSL_2(\F_5)$.
\end{center}
Any trace triple $\underline{t}$ with $\F_p(\underline{t})=\F_p(t_1,t_2,t_3)=\F_q$ that is both exceptional and projective corresponds to one of these isomorphisms.
\end{rmk}

We now come to Macbeath's classification of subgroups of $\PSL_2(\F_q)$ generated by two elements.

\begin{thm}[{\cite[Theorem 4]{Macbeath}}] \label{MacbeathClass} \label{MACTHM4}
Every trace triple $\tunder$ is exceptional, commutative or partly projective.
\end{thm}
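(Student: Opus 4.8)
The plan is to feed the group triples realizing $\tunder$ into Dickson's classification of the subgroups of $\PSL_2(\F_q)$, organizing everything around the discriminant $d(\tunder)$ of \eqref{commform}. If $d(\tunder)=0$, then $\tunder$ is commutative by the characterization recalled just above \eqref{commform} (only the implication $d(\tunder)=0\Rightarrow\tunder$ commutative is needed here), so I would assume $d(\tunder)\neq 0$ and prove that $\tunder$ is then exceptional or projective.

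So fix any group triple $\gunder=(g_1,g_2,g_3)\in T(\tunder)$ --- nonempty by Proposition~\ref{MACTHM1} --- and set $\bar H=\pm\langle g_1,g_2,g_3\rangle=\pm\langle g_1,g_2\rangle\subseteq\PSL_2(\F_q)$. The key preliminary remark is that $\bar H$ fixes no point of $\PP^1$: a common eigenvector of $g_1$ and $g_2$ over $\overline{\F}_q$ would make $[g_1,g_2]$ unipotent, so that $\tr[g_1,g_2]=t_1^2+t_2^2+t_3^2-t_1t_2t_3-2=d(\tunder)+2$ would equal $2$, contradicting $d(\tunder)\neq 0$. In particular $\bar H$ is neither cyclic nor contained in a Borel subgroup, so Dickson's theorem (Suzuki~\cite{Suzuki}; Huppert~\cite[\S II.8]{Huppert}) leaves only the possibilities that $\bar H$ is dihedral (including the Klein four-group), or isomorphic to $A_4$, $S_4$, or $A_5$, or conjugate to $\PSL_2(k)$ or $\PGL_2(k)$ for some subfield $k\subseteq\F_q$.

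Next I would split according to which of these possibilities occurs as $\gunder$ ranges over $T(\tunder)$. If every $\gunder\in T(\tunder)$ yields a group $\bar H$ conjugate to $\PSL_2(k)$ or $\PGL_2(k)$ for a subfield $k$, then $\tunder$ is projective by definition. Otherwise some $\gunder\in T(\tunder)$ yields $\bar H$ dihedral or $\bar H\cong A_4,S_4,A_5$, and I would check that the order triple of such a $\gunder$ is one of the exceptional triples, so that $\tunder$ is exceptional. If $\bar H\cong D_{2c}$ (allowing $D_4$), then any generating triple of $\bar H$ consists, after reordering, of two involutions and one element of order $c$, hence has order triple $(2,2,c)$. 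If $\bar H$ is one of the finite spherical triangle groups $A_4$, $S_4$, $A_5$, then I would run through the finitely many conjugacy classes of generating triples of $\bar H$ and read off that their order triples are exactly those listed in \eqref{exceptionaltriples}. Together with the case $d(\tunder)=0$, this establishes the trichotomy.

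I expect the main obstacle to be this last verification --- the exhaustive determination of the order triples of generating triples for $A_4$, $S_4$, and $A_5$, which is the one place requiring genuine computation --- together with the bookkeeping it entails: the three classes are not mutually exclusive (a projective trace triple may be exceptional, as the remark before the theorem notes; over the smallest fields the overlaps are exactly the exceptional isomorphisms $D_6\cong\GL_2(\F_2)$, $A_4\cong\PSL_2(\F_3)$, $S_4\cong\PGL_2(\F_3)$, $A_5\cong\PGL_2(\F_4)\cong\PSL_2(\F_5)$). Everything else reduces to a citation --- Dickson's theorem, the reducibility criterion $\tr[g_1,g_2]=2\Leftrightarrow\bar H$ fixes a point of $\PP^1$, and Macbeath's direct computation behind \eqref{commform} --- or is routine.
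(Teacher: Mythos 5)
The paper offers no proof of this statement at all---it is quoted directly as Macbeath's Theorem 4---so there is nothing internal to compare against; your argument is a correct reconstruction of what Macbeath actually does, namely run the two-generator subgroup through Dickson's classification. The two load-bearing points are right: the Fricke identity $\tr[g_1,g_2]=t_1^2+t_2^2+t_3^2-t_1t_2t_3-2=d(\tunder)+2$ (valid since $\tr(g_1g_2)=\tr(g_3^{-1})=t_3$) shows that $d(\tunder)\neq 0$ forces $\pm\langle g_1,g_2\rangle$ to fix no point of $\PP^1(\overline{\F}_q)$ for \emph{every} $\gunder\in T(\tunder)$, which strikes the cyclic, elementary abelian $p$-, and Borel-type entries from Dickson's list; and the residual non-subfield cases (dihedral, $A_4$, $S_4$, $A_5$) have generating triples whose order triples all lie in $(2,2,c)$ or \eqref{exceptionaltriples}, so their occurrence for even one $\gunder$ makes $\tunder$ exceptional. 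The deferred finite check is unproblematic: dihedral generating triples are always two involutions plus a generator of the rotation subgroup; for $S_4$ a parity count eliminates $(4,4,4)$, $(2,4,4)$, $(3,3,4)$, leaving $(2,3,4)$ and $(3,4,4)$; and for $A_4$, $A_5$ the candidate order triples not already generating proper (cyclic, dihedral, or $A_4$-) subgroups are exactly those listed. Note only that your conclusion in the third branch is ``projective'' (all $\gunder$), which is stronger than the ``partly projective'' in the statement, so nothing is lost.
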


\begin{exm} \label{Ohyeahp7}
We illustrate the above with the case $q=7$.  There are a total of $7^3=243$ trace triples.

First, the trace triples where the order triple is \emph{not} well-defined are the trace triples
\[ (2,2,2),(2,-2,-2),(-2,2,-2),(-2,-2,2) \]
which are commutative.  The role of multiplication by $-1$ plays an obvious role here, so in this example, for a trace triple $\underline{t}=(t_1,t_2,t_3)$ we say that a trace triple \defi{agrees with $\underline{t}$ with an even number of signs} if it is one of
\[ (t_1,t_2,t_3),(t_1,-t_2,-t_3),(-t_1,t_2,-t_3),(-t_1,-t_2,t_3). \]
Put this way, the trace triples where the order triple is not well-defined are those agreeing with $(2,2,2)$ with an even number of signs.  We define \defi{odd number of signs} analogously.  For each of these four trace triples, there exists a group triple $\gunder$ with order triple $(1,1,1)$, $(1,7,7)$, or $(7,7,7)$.

The other commutative trace triples are:
\begin{align*}
(2, 0, 0) & \text{, with any signs} & \quad \text{having orders } (1,2,2) \\
(2,1,1) & \text{, with even number of signs} &\quad \text{having orders }(1,3,3) \\
(2,3,3) &\text{, with even number of signs} &\quad \text{having orders }(1,4,4) \\
(0,0,0) &\text{, with any signs} &\quad \text{having orders } (2,2,2) \\
(0,3,3) &\text{, with any signs} &\quad \text{having orders } (2,4,4) \\
(1,1,-1) &\text{, with odd number of signs} &\quad \text{having orders }(3,3,3)
\end{align*}
Indeed, these are all values $(t_1,t_2,t_3) \in \F_q^3$ such that
\[ d(t_1,t_2,t_3)=t_1^2+t_2^2+t_3^2-t_1t_2t_3-4=0. \]

The three commutative trace triples $(0,0,0),(0,3,3),(1,1,-1)$ are also exceptional.  The remaining exceptional triples are:
\begin{align*}
(0,0,1) &\text{, with any signs} & \quad \text{having orders } (2,2,3) \\
(0,0,3) &\text{, with any signs} & \quad \text{having orders } (2,2,4) \\
(0,1,1) &\text{, with any signs} & \quad \text{having orders } (2,3,3) \\
(0,1,3) &\text{, with any signs} & \quad \text{having orders }(2,3,4) \\
(1,1,1) &\text{, with even number of signs} &\quad \text{having orders }(3,3,3) \\
(1,3,3) &\text{, with any signs} &\quad \text{having orders }(3,4,4)
\end{align*}

All other triples are projective:
\begin{align*}
(0, 1, 2) & \text{, with any signs} & \quad \text{having orders } (2,3,7) \\
(0, 3, 2) & \text{, with any signs} & \quad \text{having orders } (2,4,7) \\
(0, 2, 2) & \text{, with any signs} & \quad \text{having orders } (2,7,7) \\
(1, 1, 3) & \text{, with any signs} & \quad \text{having orders } (3,3,4) \\
(1,1,2) & \text{, with any signs} & \quad \text{having orders } (3,3,7) \\
(1,3,2) & \text{, with any signs} & \quad \text{having orders }(3,4,7) \\
(1,2,2)& \text{, with any signs} & \quad \text{having orders } (3,7,7) \\
(3,3,3) & \text{, with any signs} & \quad \text{having orders } (4,4,4) \\
(3,3,-2) & \text{, with odd number of signs} & \quad \text{having orders } (4,4,7) \\
(3,2,2) & \text{, with any signs} & \quad \text{having orders } (4,7,7) \\
(2,2,-2) & \text{, with odd number of signs} & \quad \text{having orders } (7,7,7)
\end{align*}

We note that the triples $(1,3,-3)$ with an odd number of signs in fact generate $\PSL_2(\F_7)$---but the triple is not projective.  In particular, we observe that one cannot deduce that a nonsingular trace triple is projective by looking only at its order triple.

Finally, the issue that this example is supposed to make clear is that changing signs on a trace triple may change the subgroup of $\PSL_2(\F_q)$ that a corresponding group triple generates.  Indeed, changing an odd number of signs on a group triple does not yield a group triple!  We address the parity of signs in the next lemma.
\end{exm}

The role of $-1$ and the parity of these signs (taking an even or odd number) is a key issue that will arise and so we address it now.

\begin{lem} \label{commproj}
Let $\tunder=(t_1,t_2,t_3) \in \F_q^3$ be a trace triple.
\begin{enumalph}
\item There are bijections
\begin{align*}
T(\tunder) \leftrightarrow T(t_1,-t_2,-t_3) &\leftrightarrow T(-t_1,t_2,-t_3) \leftrightarrow T(-t_1,-t_2,t_3) \\
(g_1,g_2,g_3)  \mapsto (g_1,-g_2,-g_3) &\mapsto (-g_1,g_2,-g_3) \mapsto (-g_1,-g_2,g_3)
\end{align*}
which preserve the subgroups generated by each triple.  In particular, if $\tunder$ is commutative (resp.\ exceptional, projective), then so is each of
\[ (t_1,-t_2,-t_3),(-t_1,t_2,-t_3),(-t_1,-t_2,t_3). \]
\item Suppose $q$ is odd.  If $\tunder$ is commutative, then $(-t_1,t_2,t_3)$ is commutative if and only if $t_1t_2t_3=0$.
\end{enumalph}
\end{lem}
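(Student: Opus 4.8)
The plan is to treat part (a) as a formal consequence of the sign-flip maps being well-defined involutions on group triples that induce the identity on the associated subgroup of $\PSL_2(\F_q)$, and part (b) as a one-line computation with the discriminant $d(\tunder)$ of \eqref{commform}.

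For part (a), I would first verify that $(g_1,g_2,g_3) \mapsto (g_1,-g_2,-g_3)$ really carries a group triple to a group triple: the product is preserved, since $g_1(-g_2)(-g_3)=g_1g_2g_3=1$, and the traces transform as required because $\tr(-g)=-\tr(g)$ when $q$ is odd (when $q$ is even the three sign-flipped triples equal $\tunder$ and there is nothing to prove). Thus the map sends $T(\tunder)$ into $T(t_1,-t_2,-t_3)$, and since applying it twice is the identity it is a bijection. Next I would note that it preserves the generated subgroup \emph{after passing to} $\PSL_2(\F_q)$: the images of $g_i$ and $-g_i$ in $\PSL_2(\F_q)$ coincide, so $\pm\la g_1,-g_2,-g_3\ra=\pm\la g_1,g_2,g_3\ra$, and in particular the order triple is unchanged because $\ord(\pm g_i)=\ord(\pm(-g_i))$. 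Since being commutative, exceptional, or projective is a property of this subgroup of $\PSL_2(\F_q)$ — commutative iff some $\gunder\in T(\tunder)$ has commutative image, exceptional iff some $\gunder$ has order triple equal to $(2,2,c)$ or one of \eqref{exceptionaltriples}, projective iff \emph{every} $\gunder$ has image conjugate to $\PSL_2(k)$ or $\PGL_2(k)$ — each property transfers verbatim across the bijection. The remaining two bijections follow by the identical argument (or by composing this one with its analogue in another coordinate).

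For part (b), I would compute directly from \eqref{commform} that
\[ d(-t_1,t_2,t_3)=t_1^2+t_2^2+t_3^2+t_1t_2t_3-4=d(t_1,t_2,t_3)+2t_1t_2t_3. \]
By Macbeath's singularity criterion (the condition $d(\tunder)=0$ recalled just before \eqref{commform}), $\tunder$ being commutative means $d(t_1,t_2,t_3)=0$, hence $d(-t_1,t_2,t_3)=2t_1t_2t_3$. Since $q$ is odd, $2\in\F_q^\times$, so $d(-t_1,t_2,t_3)=0$ if and only if $t_1t_2t_3=0$; applying the same criterion once more, $(-t_1,t_2,t_3)$ is commutative if and only if $t_1t_2t_3=0$.

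I do not expect a genuine obstacle here: part (b) is essentially the displayed identity, and the only point in part (a) requiring any care is that the sign-flip map is well-defined precisely because it flips an \emph{even} number of signs (so $g_1g_2g_3$ is unchanged), and that it is the \emph{projective} subgroup that is preserved — the subgroup of $\SL_2(\F_q)$ itself can genuinely change, e.g. on a triple $(1,g,g^{-1})$ with $g$ of odd order — which is why one must work in $\PSL_2(\F_q)$ throughout.
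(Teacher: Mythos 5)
Your proof is correct and follows the same route as the paper: part (a) is exactly the verification the authors leave as "clear" (an even sign flip preserves the product $g_1g_2g_3=1$ and the image in $\PSL_2(\F_q)$, hence all three properties), and part (b) is the same discriminant computation $d(-t_1,t_2,t_3)-d(t_1,t_2,t_3)=2t_1t_2t_3$ combined with Macbeath's singularity criterion. Your closing caveat—that only the projective subgroup, not the subgroup of $\SL_2(\F_q)$, is preserved—is a worthwhile precision the paper elides.
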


\begin{proof}
Part (a) is clear.  As for part (b): the trace triple $\tunder$ is commutative if and only if $d(t_1,t_2,t_3)=0$.  So $(-t_1,t_2,t_3)$ is also commutative if and only if $d(-t_1,t_2,t_3)=0$ if and only if $d(t_1,t_2,t_3)-d(-t_1,t_2,t_3)=2t_1t_2t_3=0$, as claimed.
\end{proof}

% \plc{I had some problems with the following sentence, and you said in an email that it was not always true, so I guess it needs to be fixed.}
% In particular, it follows from Lemma \ref{commproj} that if $\tunder$ is commutative, and $t_1t_2t_3 \neq 0$, then $(-t_1,t_2,t_3)$ is projective.

Let $\ell/k$ be a separable quadratic extension.  We say that $t \in \ell$ \defi{is a squareroot from $k$} if $t=0$ or $t=\sqrt{u}$ with $u \in k^\times \setminus k^{\times 2}$.  A trace triple $\tunder$ is \defi{irregular} \cite[p.\ 28]{Macbeath} if the field $\F_p(\tunder)=\F_p(t_1,t_2,t_3) \subseteq \F_q$ has a subfield $k \subseteq \F_p(\tunder)$ such that
\begin{enumroman}
\item $[\F_p(\tunder):k] = 2$ and
\item after reordering, we have $t_1 \in k$ and $t_2,t_3$ are squareroots from $k$.
\end{enumroman}
Otherwise, $\tunder$ is \defi{regular}.  Of course, if $[\F_p(\tunder):\F_p]$ is odd---e.g., if $[\F_q:\F_p]$ is odd---then $\tunder$ is necessarily regular.

\begin{prop}[{\cite[Theorem 3]{Macbeath}}] \label{MACTHM3}
Let $\gunder$ generate a projective subgroup $G=\pm \la g_1,g_2,g_3 \ra \subseteq \PSL_2(\F_q)$ and let $\tunder$ be its trace triple.
\begin{enumalph}
\item Suppose $\tunder$ is regular.  Then $G$ is conjugate
in $\PSL_2(\F_q)$ to $\PSL_2(\F_p(\tunder))$.
\item Suppose $\tunder$ is irregular, and let $k_0$ be the unique
index $2$ subfield of $\F_p(\tunder)$.  Then $G$ is conjugate in
$\PSL_2(\F_q)$ to either $\PSL_2(\F_p(\tunder))$ or $\PGL_2(k_0)$.
\item Suppose $k=\F_q$.  Then the number of orbits of $\Inn(\SL_2(\F_q)) = \PSL_2(\F_q)$ on $T(\tunder)$ is $2$ or $1$ according as $p$ is odd or $p=2$.
\item For all $\gunder' \in T(\tunder)$, there exists $m \in \SL_2(\overline{\F}_q)$
such that $m^{-1} \gunder m = \gunder'$.
\end{enumalph}
\end{prop}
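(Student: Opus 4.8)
The plan is to prove (d) first, to deduce (c) from it by Galois cohomology, and to obtain (a) and (b) from a descent argument that identifies the relevant subfield and separates the $\PSL_2$ and $\PGL_2$ cases. For (d): a projective subgroup $G$ contains a conjugate of some $\PSL_2(k)$, and $\PSL_2(k)$ fixes no point of $\PP^1(\overline{\F}_q)$, so $\la g_1,g_2,g_3\ra\subseteq\SL_2(\overline{\F}_q)$ is absolutely irreducible. Since $g_3=(g_1g_2)^{-1}$ and the trace of a matrix in $\SL_2$ equals that of its inverse, the trace triple $\tunder$ is precisely the triple of Fricke coordinates $(\tr g_1,\tr g_2,\tr g_1g_2)$ of the pair $(g_1,g_2)$. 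Statement (d) is then the classical fact that an absolutely irreducible pair in $\SL_2$ over an algebraically closed field is determined up to conjugacy by these three traces; I would prove this directly by cases on the Jordan type of $g_1$: put $g_1$ into a fixed normal form (diagonal, or $\pm$ a unipotent Jordan block), use irreducibility to force the appropriate entries of $g_2$ to be nonzero, normalize one of them using the residual centralizer of $g_1$, and then recover $g_2$ uniquely from $\tr g_2$, $\tr g_1g_2$ and $\det g_2=1$.

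For (c): by (d) the set $T(\tunder)$ is a single orbit $\mathcal{O}$ under $\SL_2(\overline{\F}_q)$ acting by simultaneous conjugation. Since $\tunder\in\F_q^3$, the orbit $\mathcal{O}$ is Galois-stable, and by Proposition~\ref{MACTHM1} it contains an $\F_q$-point, so as an $\F_q$-variety $\mathcal{O}\cong\SL_2/H$, where $H$ is the scheme-theoretic centralizer of an absolutely irreducible pair, namely the center $\mu_2$ of $\SL_2$ if $p$ is odd and the trivial group if $p=2$. The exact sequence of pointed sets attached to $1\to H\to\SL_2\to\SL_2/H\to 1$ identifies the set of $\SL_2(\F_q)$-orbits on $\mathcal{O}(\F_q)=T(\tunder)$ with $\ker\bigl(H^1(\F_q,H)\to H^1(\F_q,\SL_2)\bigr)$; since $\SL_2$ is connected, $H^1(\F_q,\SL_2)=1$ by Lang's theorem, so the number of orbits is $\#H^1(\F_q,H)=\#(\F_q^\times/\F_q^{\times 2})$, which is $2$ when $p$ is odd and $1$ when $p=2$. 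As $\Inn(\SL_2(\F_q))=\PSL_2(\F_q)$ acts through conjugation by $\SL_2(\F_q)$, this is (c).

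For (a) and (b): put $k_1=\F_p(\tunder)$. Every word trace $\tr\bigl(w(g_1,g_2)\bigr)$ is a fixed polynomial with integer coefficients in $\tr g_1,\tr g_2,\tr g_1g_2$ (the invariant theory of $\SL_2\times\SL_2$ under simultaneous conjugation), so all character values of the pair lie in $k_1$; together with absolute irreducibility this forces the $k_1$-subalgebra $R$ of $\M_2(\overline{\F}_q)$ generated by $\la g_1,g_2\ra$ to be a central simple $k_1$-algebra with $R\otimes_{k_1}\overline{\F}_q=\M_2(\overline{\F}_q)$; as the Brauer group of the finite field $k_1$ is trivial, $R\cong\M_2(k_1)$, and Skolem--Noether conjugates $\la g_1,g_2\ra$ into $\M_2(k_1)\cap\SL_2(\overline{\F}_q)=\SL_2(k_1)$, so $G$ is conjugate in $\PSL_2(\F_q)$ into $\PSL_2(k_1)$. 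For the reverse inclusion: if $G$ sits in the standard $\PSL_2(k)$, then each $g_i$ has trace in $k$, so $k_1\subseteq k$; if $G$ sits in the standard copy of $\PGL_2(k_0)$ inside the $\PSL_2$ of its quadratic extension, then a lift of $\pm h$ with $h\in\GL_2(k_0)$ has trace $(\det h)^{-1/2}\tr h$, which lies in $k_0$ when $\det h$ is a square in $k_0$ and is a squareroot from $k_0$ otherwise, while $g_1g_2g_3=1$ makes $\det h_1\det h_2\det h_3$ a square, so an even number --- hence, if $G$ is genuinely $\PGL_2(k_0)$, exactly two --- of the three traces are genuine squareroots from $k_0$. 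Combining the two directions: in the regular case the $\PGL_2(k_0)$ possibility is excluded (it would make $\tunder$ irregular), $G$ is conjugate into $\PSL_2(k_1)$, and no proper subfield of $k_1$ can carry all the traces, so $G$ is conjugate to $\PSL_2(k_1)$, which is (a); in the irregular case $k_1$ is the quadratic extension of $k_0$ and the two projective subgroups of $\PSL_2(k_1)$ whose trace field is $k_1$ are $\PSL_2(k_1)$ and the copy of $\PGL_2(k_0)$, both of which occur, which is (b). A handful of small fields, where the abstract isomorphism type of $G$ does not determine $k$, must be set aside; these are exactly the exceptional triples.

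The heart of the proof is (a)--(b): making the descent step rigorous (from ``$k_1$-valued character'' to ``conjugate into $\SL_2(k_1)$'', via triviality of the Brauer group of a finite field) and tracking the $\pm$ ambiguity in the matrix lifts of projective elements --- the phenomenon already visible in Lemma~\ref{commproj} --- which is what produces the ``squareroot from $k$'' pattern of the irregular/$\PGL_2$ case. By contrast, (d) is a routine normal-form computation and (c) is a short cohomological count once (d) is in hand.
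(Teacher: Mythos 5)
A point of order first: the paper does not prove this proposition at all---it is imported verbatim from Macbeath \cite[Theorem 3]{Macbeath}---so there is no internal argument to compare yours against, and the relevant comparison is with Macbeath's original proof. Your reconstruction takes a genuinely different, more modern route: Macbeath works directly with the ternary quadratic form $x^2+y^2+z^2+t_1yz+t_2xz+t_3xy$ and explicit matrix normalizations, whereas you organize the proof around Fricke trace coordinates for (d), Lang's theorem and nonabelian $H^1$ for (c), and a ``trace field determines the algebra'' descent (the $k_1$-algebra generated by the group is central simple, the Brauer group of a finite field is trivial, Skolem--Noether) for (a)--(b). That last step is very much in the spirit of the quaternion-order constructions of Section \ref{quatconst}, and your analysis of which traces are squareroots from $k_0$ under $\PGL_2(k_0)\hookrightarrow\PSL_2(k_1)$ correctly isolates the source of the regular/irregular dichotomy. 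Two small points should be made explicit: in (d) you need $\gunder'$ to be absolutely irreducible as well, which holds because irreducibility is detected by the trace triple alone (nonvanishing of $d(\tunder)$ in \eqref{commform}); and in (c) for $p=2$ the scheme-theoretic stabilizer $\mu_2$ is infinitesimal, so one should either use fppf cohomology or argue directly with the cocycle $\sigma\mapsto m\sigma(m)^{-1}\in\{\pm 1\}$, which is what the count really amounts to.

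The one genuine gap is at the end of (a)--(b). Skolem--Noether, applied to the two embeddings of $R\cong\M_2(k_1)$ into $\M_2(\F_q)$, produces a conjugator in $\GL_2(\F_q)$, hence in $\PGL_2(\F_q)$---not in $\PSL_2(\F_q)$ as the proposition asserts and as the paper later uses (e.g.\ in Proposition \ref{Autactstransitive}). For odd $q$ the quotient $\PGL_2(\F_q)/\PSL_2(\F_q)$ is nontrivial, so you must still correct the conjugator by an element of $N_{\PGL_2(\F_q)}(\PSL_2(k_1))\supseteq\PGL_2(k_1)$ to land in $\PSL_2(\F_q)$; this works when $[\F_q:k_1]$ is odd, because a nonsquare of $k_1$ remains a nonsquare in $\F_q$ and so $\PGL_2(k_1)$ meets the nontrivial coset, but when $[\F_q:k_1]$ is even one has $\PGL_2(k_1)\subseteq\PSL_2(\F_q)$ and the $\PGL_2(\F_q)$-class of the subfield subgroup genuinely splits, so a further argument (or a more careful interpretation of the target subgroup) is needed. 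The same issue affects the $\PGL_2(k_0)$ case of (b). This is exactly the kind of index-two bookkeeping the surrounding text (Lemma \ref{commproj}, Example \ref{Ohyeahp7}) warns about; as written, your argument establishes conjugacy in $\PGL_2(\F_q)$ only.
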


We say that a trace triple $\tunder$ is \defi{of $\PSL_2$-type} (resp.\ \defi{of $\PGL_2$-type}) if $\tunder$ is projective and for all $\gunder \in T(\tunder)$ the group $\pm \la g_1,g_2,g_3 \ra$ is conjugate to $\PSL_2(k)$ (resp.\ $\PGL_2(k_0)$); by Proposition \ref{MACTHM3}(a), every projective triple is either of $\PSL_2$-type or of $\PGL_2$-type.

We now transfer these results to trace triples in the projective groups $\PSL_2(\F_q)$.  The passage from $\SL_2(\F_q)$ to $\PSL_2(\F_q)$ identifies conjugacy classes whose traces have opposite signs, so associated to a triple of conjugacy classes $\Cunder$ in $\PSL_2(\F_q)$ is a trace triple $(\pm t_1, \pm t_2, \pm t_3)$, which we abbreviate $\pm \tunder$ (remembering that the signs may be taken independently).  We call $\pm \tunder$ a \defi{trace triple up to signs}.

Let $\pm \tunder$ be a trace triple up to signs.  We say $\pm \tunder$ is \defi{commutative} if there exists $\pm \gunder \in T(\pm \tunder)$ such that $\pm \la g_1, g_2, g_3 \ra$ is commutative.  We say $\pm \tunder$ is \defi{exceptional} if there exists a lift of $\pm \tunder$ to a trace triple $\tunder$ such that the associated order triple $(a,b,c)$ is exceptional.  Finally, we say $\pm \tunder$ is \defi{projective} if all lifts $\tunder$ of $\pm \tunder$ are projective, and \defi{partly projective} if there exists a lift $\tunder$ of $\pm \tunder$ that is projective.

\begin{lem}
Every trace triple up to signs is exceptional, commutative, or partly projective.
\end{lem}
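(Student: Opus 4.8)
The plan is to deduce this directly from Macbeath's classification of trace triples, Theorem \ref{MACTHM4}, applied to a single lift of $\pm\tunder$, and then to transport each of the three possible conclusions back up to signs using only the definitions.

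First I would fix a lift $\tunder=(t_1,t_2,t_3)\in\F_q^3$ of $\pm\tunder$ and apply Theorem \ref{MACTHM4}: $\tunder$ is commutative, exceptional, or projective. I would dispose of the commutative case first, since it is the only one in which the order triple attached to $\tunder$ need not be well-defined: if $\tunder$ is commutative, then by definition there is a group triple $\gunder\in T(\tunder)$ with $\pm\la g_1,g_2,g_3\ra\subseteq\PSL_2(\F_q)$ commutative, and since $\gunder\in T(\tunder)\subseteq T(\pm\tunder)$ this same triple witnesses that $\pm\tunder$ is commutative. If instead $\tunder$ is projective, then $\tunder$ is itself a lift of $\pm\tunder$, so $\pm\tunder$ is partly projective by definition.

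It remains to treat a lift $\tunder$ that is exceptional but not commutative. As recorded just after the definition of commutative trace triples, a non-commutative trace triple has a common order triple $(a,b,c)$ over all of $T(\tunder)$; and exceptionality means that some---hence every---$\gunder\in T(\tunder)$ has order triple of the form $(2,2,c)$ or one of those listed in (\ref{exceptionaltriples}). Thus $(a,b,c)$ is precisely the order triple associated to the lift $\tunder$ of $\pm\tunder$ and it is exceptional, so $\pm\tunder$ is exceptional. I do not anticipate any genuine difficulty beyond this bookkeeping; the only subtlety is the ordering of the cases, forced by the fact that the notion ``exceptional'' for $\pm\tunder$ refers to the order triple of a lift, which is defined only when that lift is non-commutative. (If one reads the conclusion of Theorem \ref{MACTHM4} instead in the form ``$\tunder$ is commutative, exceptional, or admits a projective sign variant,'' the argument is unaffected, since by Lemma \ref{commproj}(a) every sign variant of $\tunder$ is again a lift of $\pm\tunder$.)
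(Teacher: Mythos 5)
Your proof is correct and takes essentially the same route as the paper, whose entire proof is the citation ``This follows from Theorem \ref{MACTHM4} and Lemma \ref{commproj}''; you have simply spelled out the case-by-case transport from a lift $\tunder$ to $\pm\tunder$, including the role of Lemma \ref{commproj}(a) in handling sign variants. The care you take with the ordering of cases (so that the order triple of the lift is only invoked in the non-commutative situation) matches the intended reading of the definitions.
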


\begin{proof}
This follows from Theorem \ref{MACTHM4} and  Lemma \ref{commproj}.
\end{proof}
To a nonsingular trace triple up to signs $\pm \tunder$, we associate the order triple $(a,b,c)$ as the order triple associated to any lift $\tunder$ of $\pm \tunder = (\pm t_1, \pm t_2, \pm t_3)$; this is well defined because we took orders of elements in $\PSL_2(\F_q)$ from the very beginning.  We assume that $a \leq b \leq c$; Remark 1.2 explains why this is no loss of generality.

For a triple of conjugacy classes $\Cunder=(C_1,C_2,C_3)$ of $\PSL_2(\F_q)$, recall we have defined $\Sigma(\Cunder)$ to be the set of generating triples $\gunder=(g_1,g_2,g_3)$ such that $g_i \in C_i$.

\begin{prop} \label{Autactstransitive}
Let $\Cunder$ be a triple of conjugacy classes in $\PSL_2(\F_q)$.
Let $\pm \tunder$ be the associated trace triple up to signs, let $\F_q=\F_p(\pm \tunder)$, and let $(a,b,c)$ the associated order triple.  Suppose that $\pm \tunder$ is partly projective and not exceptional, and let $G=\pm \la g_1,g_2,g_3 \ra \subseteq \PSL_2(\F_q)$.

Then the values $\#\Sigma(\Cunder)/\Inn(G)$ and $\#\Sigma(\Cunder)/\!\Aut(G)$ are given in the following table:

\[ \begin{array}{cccc||c|c}
p & a & abc & G & \#\Sigma(\Cunder)/\Inn(G) & \#\Sigma(\Cunder)/\!\Aut(G) \\
\hline \hline
p=2 & - & - & - & 1 & 1 \\
\hline
\multirow{3}{*}{\textup{$p>2$}} & \multirow{3}{*}{$a=2$} & \textup{$p \mid abc$} & - & 1 & 1 \\
 & & \textup{$p \nmid abc$} & \PGL_2 & 1 & 1 \\
 & & \textup{$p \nmid abc$} & \PSL_2 & 2 & 1 \\
\hline
\multirow{3}{*}{\textup{$p>2$}} & \multirow{3}{*}{$a \neq 2$} & \textup{$p \mid abc$} & - & \leq 2 & \leq 2 \\
 & & \textup{$p \nmid abc$} & \PGL_2 & \leq 2 & \leq 2 \\
& & \textup{$p \nmid abc$} & \PSL_2 & \leq 4 & \leq 2
\end{array} \]
\end{prop}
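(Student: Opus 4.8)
The plan is to count the set $\Sigma(\Cunder)$ of generating triples directly, and then divide by the sizes of the orbits of $\Inn(G)$ and $\Aut(G)$; the heart of the matter is that the orbit sizes are controlled, on the $\Inn(G)$ side, by Proposition~\ref{MACTHM3}(c)--(d) (which says $T(\tunder)$ breaks into $1$ or $2$ orbits under $\PSL_2(\F_q)$-conjugation according as $p=2$ or $p>2$), and on the $\Aut(G)$ side by the fact that the outer automorphism $\tau$ fuses the two $\PSL_2$-conjugacy classes in a fixed trace triple while $\sigma$ (Frobenius) moves $\F_p(\tunder)$ and hence cannot fix a triple with $\F_p(\tunder)=\F_q$. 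First I would fix a triple of conjugacy classes $\Cunder$ in $\PSL_2(\F_q)$, lift to a trace triple up to signs $\pm\tunder$, and observe that since $\pm\tunder$ is partly projective and not exceptional, every $\gunder \in \Sigma(\Cunder)$ generates a group $G$ conjugate to $\PSL_2(\F_q)$ or to $\PGL_2(k_0)$ where $[\F_q:k_0]=2$ and $\F_q=\F_p(\tunder)$ (Proposition~\ref{MACTHM3}(a)--(b)). Note that when $G\cong\PGL_2(k_0)$ we have $G=\Aut(G)$ has no outer automorphisms beyond those already seen inside $\PGL_2$, and that the center is trivial in all cases, so Proposition~\ref{WRWR} applies and the quantities we want are genuinely $\#\Sigma(\Cunder)/\Inn(G)$ and $\#\Sigma(\Cunder)/\!\Aut(G)$.

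Next I would work case by case along the rows of the table. For $p=2$: here $\PSL_2(\F_q)=\SL_2(\F_q)=\PGL_2(\F_q)$, there are no signs to worry about, and Proposition~\ref{MACTHM3}(c) gives a single $\PSL_2(\F_q)$-orbit on $T(\tunder)$; restricting to generating triples (the non-generating ones all lie in commutative or exceptional cases, excluded by hypothesis) this single orbit is exactly $\Sigma(\Cunder)$, so $\#\Sigma(\Cunder)/\Inn(G)=\#\Sigma(\Cunder)/\!\Aut(G)=1$. For $p>2$ the lift $\tunder$ of $\pm\tunder$ is determined up to the four sign patterns of Lemma~\ref{commproj}(a), and I would split on how these sign changes affect the fixed conjugacy classes. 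The key reduction is Lemma~\ref{PSL2fieldofrat} / the conjugacy-class analysis of \S\ref{sec:conjclass}: in $\PSL_2(\F_q)$ a semisimple class is determined by its trace \emph{up to sign}, so all four sign lifts of $\pm\tunder$ give the \emph{same} triple $\Cunder$ of $\PSL_2$-classes, and $\Sigma(\Cunder)$ is the disjoint union over the (one or two) of these sign lifts whose order triple is not commutative/exceptional, of the corresponding sets of generating group triples. When $a=2$ (so $g_1$ has order $2$, and $\tr g_1 = 0$, which is fixed by $t_1\mapsto -t_1$) Lemma~\ref{commproj}(b) shows $(-t_1,t_2,t_3)$ stays commutative iff $t_1t_2t_3=0$, i.e. essentially only the distinguished lift matters, there is up to $\Inn$ one orbit if $G\cong\PGL_2$ and two if $G\cong\PSL_2$ (by \ref{MACTHM3}(c)), and $\tau$ fuses those two in the $\PSL_2$-case, collapsing to $1$ modulo $\Aut(G)$; the $p\mid abc$ sub-row is the unipotent case, handled by the unipotent part of Lemma~\ref{PSL2fieldofrat} together with \ref{MACTHM3}(c). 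When $a\neq 2$, up to two sign lifts can be simultaneously non-commutative and non-exceptional, doubling each previous count, which yields the bounds $\le 2$, $\le 2$, and $\le 4$, respectively, with the $\Aut(G)$ column still bounded by $2$ because $\tau$ kills the $\PSL_2$-orbit-doubling but the two-sign-lift phenomenon survives (and $\sigma$ does not help, since $\F_p(\tunder)=\F_q$ is fixed pointwise only by the identity of $\la\sigma\ra$).

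The main obstacle I anticipate is the bookkeeping in the $p>2$, $a\neq 2$ rows: one must verify carefully \emph{which} of the four sign-lifts of $\pm\tunder$ actually contribute generating, non-exceptional group triples — a priori it could be $0$, but $\Sigma(\Cunder)\neq\emptyset$ is assumed, and one needs to rule out that three or four of them contribute (this is where Lemma~\ref{commproj}(b) and the parity of signs is used: changing an odd number of signs on a group triple is not a group triple, and changing the remaining even sign patterns is already accounted for by $T(\tunder)\leftrightarrow T(t_1,-t_2,-t_3)$ etc., so at most the distinguished lift and one "odd-sign-type" lift survive). I would also need to double-check the interaction with the outer automorphism $\tau$ in the $\PGL_2$-type case — that $\tau$ is realized by an actual element of $\PGL_2(\F_q)\subseteq\PSL_2(\F_{q^2})$ and already acts within a fixed $\Cunder$, so it does not further reduce the $\PGL_2$ counts below what $\Inn$ gives — which is exactly the asymmetry that makes the $\PGL_2$ rows read $1,1$ (resp. $\le 2,\le 2$) rather than exhibiting a drop from the first column to the second. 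Everything else is an assembly of Propositions~\ref{MACTHM3} and \ref{WRWR} with the conjugacy-class dictionary of \S\ref{sec:conjclass}.
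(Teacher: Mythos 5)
Your proposal follows essentially the same route as the paper's proof: the same case division ($p=2$; then $p>2$ split by $a=2$ versus $a\neq 2$, $p\mid abc$ versus $p\nmid abc$, and $\PSL_2$- versus $\PGL_2$-type), with Macbeath's Proposition \ref{MACTHM3} controlling the $\Inn(G)$-orbits, the sign analysis of Lemma \ref{commproj} showing that only the lifts $\tunder$ and $(-t_1,t_2,t_3)$ can contribute (trivially coinciding when $a=2$ since $t_1=0$), the outer automorphism $\tau$ fusing the two $\PSL_2$-orbits, and the hypothesis $\F_q=\F_p(\tunder)$ preventing any power of $\sigma$ from stabilizing the trace triple. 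One small slip worth correcting: $\PGL_2(k_0)$ does have the outer automorphism $\sigma$ in general (so $G\neq\Aut(G)$), but your subsequent argument relies only on the correct fact that no nontrivial power of $\sigma$ fixes $\tunder$, so the conclusion is unaffected.
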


\begin{proof}
Suppose $p=2$.  Then $\PSL_2(\F_q) = \SL_2(\F_q)$ and by Proposition \ref{MACTHM3}(b) the triple $\Cunder$ is rigid, i.e.\ we have $\#\Sigma(G)/\Inn(G)=\#\Sigma(G)/\Aut(G)=1$.  This gives the first row of the table.
So from now on we suppose $p > 2$.

Let $\tunder=(t_1,t_2,t_3) \in \F_q^3$ be a lift of $\pm \tunder$.  Let $\pm \gunder, \pm \gunder' \in \Sigma(\Cunder)$; lift them to $\gunder,\gunder'$ in $\SL_2(\F_q)^3$ such that $g_1 g_2 g_3 = \pm 1$ and $g_1' g_2' g_3' = \pm 1$ and such that $\tr(g_i)=\tr(g_i')=t_i$.

\emph{Case} 1: \emph{Suppose $a=2$.}  Then $t_1=\tr(g_1)=0=-\tr(g_1)$, so changing the signs of $g_1$ and $g_1'$ if necessary, we may assume that $\gunder,\gunder'$ are triples (that is, $g_1g_2g_3=g_1'g_2'g_3'=1$).  Then by Proposition \ref{MACTHM3}(c), there exists $m \in \SL_2(\overline{\F}_q)$ such that $m$ conjugates $\gunder$ to $\gunder'$.  Since the elements of
$\pm \gunder$ generate $G$ by hypothesis and the elements of $\pm \gunder'$ lie in $G$, it follows that conjugation by $m$ induces an automorphism $\varphi$ of $G$, so $\varphi(\gunder)=\gunder'$, and
$\Cunder$ is weakly rigid.  This gives the entries in the last column for $p>2$ and $a=2$.

\emph{Case} 1(a): \emph{Suppose $a=2$ and $p \mid abc$.}  Then at least one conjugacy class is unipotent and so the two orbits of the set $T(\tunder)$ under $\Out(\PSL_2(\F_q))$ correspond to two different conjugacy class triples and only one belongs to $\Cunder$.  Therefore the triple is in fact rigid.

\emph{Case} 1(b): \emph{Suppose $a=2$ and $p \nmid abc$.}  First, suppose that $G$ is of $\PGL_2$-type.  Then $G\cong\PGL_2(\F_{\sqrt{q}})$ and $\Out(\PGL_2(\F_{\sqrt{q}}))=\la \sigma \ra$; and since $\F_q=\F_p(\tunder)$, the stabilizer of $\la \sigma \ra$ acting on $\tunder$ is trivial as in the analysis following (\ref{outpsl2fq}), and hence the orbits must be already identified by conjugation in $\PGL_2(\F_q)$, so the triple is in fact rigid.

Second, suppose that $G$ is of $\PSL_2$-type.  Then since $\F_q = \F_p(\pm \tunder)$ we have $G=\PSL_2(\F_q)$.  Because $p \nmid abc$, all conjugacy classes $C_i \in \Cunder$ are semisimple and so are preserved under automorphism.  From Proposition \ref{MACTHM3}(b), we see that there are two orbits of $\PSL_2(\F_q)$ acting by conjugation on $\Sigma(\Cunder)$ and the element
% But since $-1$ is central in $\SL_2(\F_q)$, the passage to $\PSL_2(\F_q)$ does not identify two orbits within any conjugacy class: if it did, then by suitable choices of signs in the lifts one could show that any two elements in $T(\tunder)$ are $\SL_2(\F_q)$-conjugate, a contradiction.  So there are exactly two orbits.
$\tau \in \Out(\PSL_2(\F_q))$ induced by conjugation by an element of $\PGL_2(\F_q) \setminus \PSL_2(\F_q)$ identifies these orbits: they are identifed by some element of $\Out(\PSL_2(\F_q))$, but since $\F_q=\F_p(\tunder)$ the stabilizer of $\la \sigma \ra$ acting on $\tunder$ is again trivial.

This completes Case 1 and the table for $p>2$ and $a=2$.  Note that in this case, the choice of the lift $\tunder$ does not figure in the analysis.

\emph{Case} 2: \emph{Suppose $a > 2$.}  Now either $\gunder'$ is already a triple, or changing the sign of $g_1$ we have $\gunder'$ is a triple with trace triple $\tunder'=(-t_1,t_2,t_3)$.  By Lemma \ref{commproj}, this is without loss of generality.

\emph{Case} 2(a): \emph{Suppose $\tunder'=\tunder$}.  Then the same analysis as in Case 1 shows that $\gunder'$ is obtained from $\gunder$ by an automorphism $\varphi$ of $G$, and this automorphism can be taken to be inner except when $p \nmid abc$ and $G=\PSL_2(\F_q)$, in which case up to conjugation there are two triples.

\emph{Case} 2(b): \emph{Suppose $\tunder' \neq \tunder$}.  Then clearly $\gunder'$ is not obtained from $\gunder$ by an inner automorphism.  If $\gunder'=\varphi(\gunder)$ with $\varphi$ an outer automorphism, then after conjugation in $\SL_2(\overline{\F}_q)$, as in Case 2(a), we may assume that $\varphi=\sigma^j$ is a power of the $p$-power Frobenius automorphism $\sigma$, with $\sigma^j(t_1)=-t_1$ and $\sigma^j(t_2)=t_2$ and $\sigma^j(t_3)=t_3$ (with slight abuse of notation).  But again $\F_q$ is generated by the trace triple, so the fixed field $k$ of $\sigma^j$ contains $t_2,t_3$ and $\F_q$ is a quadratic extension of $k$, generated by $t_1$, and $t_1$ is a squareroot from $k$.

This concludes Case 2, with the stated inequalities: we have at most twice as many triples as in Cases 1(a) and 1(b).

We have equality in the first column if and only if $\tunder'$ is projective: otherwise, $\tunder'$ is commutative, as in the proof of Proposition \ref{Autactstransitive}, and all $\gunder' \in \tunder'$ generate affine or commutative subgroups of $\PSL_2(\F_q)$ since they are singular \cite[Theorem 2]{Macbeath}, and any such triple does not belong to $\Sigma(\Cunder)$---the trace triple up to signs is not exceptional so any group generated by a corresponding triple is not projective.  In the second column, we have equality if and only if $\tunder'$ is projective and we are not in the special case described in 2(b).
\end{proof}

\section{Proof of Theorems} \label{sec:proofs}

In this section, we give proofs of the main theorems A, B, and C.

We begin with Theorem A, which follows from the following theorem.

\begin{thm} \label{thmaorders}
Let $(a,b,c)$ be a hyperbolic triple with $a,b,c \in \Z_{\geq 2} \cup \{\infty\}$.  Let $\frakp$ be a prime of $E(a,b,c)$ with residue field $\F_{\frakp}$ lying above the rational prime $p$, and suppose $\frakp \nmid abc$.  If $\frakp \mid 2$,  suppose further that $(a,b,c)$ is not of the form $(mk,m(k+1),mk(k+1))$ with $k,m \in \Z$.  Let $a^\sharp=p$ if $a=\infty$ and $a^\sharp=a$ otherwise, and similarly with $b,c$.

Then there exists a $G$-Galois \Belyi\ map
\[ X(a,b,c;\frakp) \to \PP^1 \]
with ramification indices $(a^\sharp,b^\sharp,c^\sharp)$, where
\[ G=
\begin{cases}
\PSL_2(\F_\frakp), & \text{if $\frakp$ splits completely in $F(a,b,c)$}; \\
\PGL_2(\F_\frakp), &\text{otherwise}.
\end{cases}
\]
\end{thm}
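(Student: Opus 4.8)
The plan is to realize the asserted group $G$ as the image of an explicit homomorphism out of $\Deltabar(a,b,c)$ coming from the quaternionic data of Section~\ref{quatconst}, to identify that image using Macbeath's classification, and then to package the quotient as a $G$-Galois \Belyi\ map via Proposition~\ref{wolfartequiv}. First I would construct the homomorphism. Assuming $b<\infty$ (the cases $b=c=\infty$ and $a=b=c=\infty$, where $\Deltabar$ is commensurable with $\SL_2(\Z)$, reduce to the classical modular analysis, cf.\ Remark~\ref{TheoremBext}), the hypotheses $\frakp\nmid abc$ and---when $\frakp\mid 2$---that $(a,b,c)$ is not of the excluded shape give $\frakp\nmid\beta$ by Lemma~\ref{pnmid}, so $\Lambda$ and $\calO$ are maximal at $\frakp$; Proposition~\ref{pglyeah} together with Remark~\ref{TheoremBext} then produces a homomorphism $\phi=\phi_\frakp\colon\Deltabar(a,b,c)\to\PGL_2(\F_\frakp)$ with $\phi(\deltabar_s)$ the reduction of $\delta_s^2+1=\lambda_{2s}\delta_s$ (or of $(\delta_b^2+1)(\delta_c^2+1)$ when $s=a=2$). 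An eigenvalue computation shows $\phi(\deltabar_s)$ has order $s^\sharp$---namely $s$ when $s<\infty$, using $\frakp\nmid s$, and $p$ when $s=\infty$, the reduction being a nontrivial unipotent---so in particular $\phi$ is injective on each $\langle\deltabar_s\rangle$ with $s<\infty$. Therefore $\Gammabar(\frakp):=\ker\phi$ is a finite-index normal subgroup meeting no conjugate of a nontrivial power of any $\deltabar_s$, hence is torsion-free.

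The heart of the argument is to show $G:=\phi(\Deltabar(a,b,c))=\langle\phi(\deltabar_a),\phi(\deltabar_b)\rangle$ is exactly the claimed group. Embedding $\PGL_2(\F_\frakp)\hookrightarrow\PSL_2(\F_{\frakp^2})$ by $\pm g\mapsto\pm(\det g)^{-1/2}g$, the subgroup $G$ has trace triple up to signs $\pm\tunder=(\pm\overline{\lambda_{2a}},\pm\overline{\lambda_{2b}},\pm\overline{\lambda_{2c}})$ and order triple $(a^\sharp,b^\sharp,c^\sharp)$. I would then establish: (i) the reduced norm of $\phi(\deltabar_s)$ is $\lambda_s+2=\lambda_{2s}^2$ (resp.\ $(\lambda_b+2)(\lambda_c+2)$), so $\phi(\deltabar_s)$ lies in $\PSL_2(\F_\frakp)$ precisely when $\overline{\lambda_{2s}}\in\F_\frakp$, whence $G\subseteq\PSL_2(\F_\frakp)$ if and only if $\frakp$ splits completely in $F(a,b,c)=E(a,b,c)(\lambda_{2a},\lambda_{2b},\lambda_{2c})$; (ii) since $E(a,b,c)=\Q(\lambda_a,\lambda_b,\lambda_c,\lambda_{2a}\lambda_{2b}\lambda_{2c})$ and each of $\lambda_s$ and $\lambda_{2a}\lambda_{2b}\lambda_{2c}$ is a polynomial in the entries of $\tunder$, the field $\F_p(\pm\tunder)$ is the residue field $\F_\frakp$ when $\frakp$ splits completely in $F(a,b,c)$ and is $\F_{\frakp^2}$ otherwise; (iii) $\pm\tunder$ is not commutative, because the half-discriminant $d(\tunder)$ for the sign pattern coming from a genuine group triple equals $\beta\bmod\frakp\neq 0$ by Lemma~\ref{pnmid}, the remaining sign patterns being handled by Lemma~\ref{commproj}; and (iv) $\pm\tunder$ is regular when $\frakp$ splits completely in $F(a,b,c)$---an irregular structure over an index-$2$ subfield $k_0\subsetneq\F_\frakp$ would force $\lambda_s\in k_0$ and $\lambda_{2a}\lambda_{2b}\lambda_{2c}\in k_0$, hence $\F_\frakp\subseteq k_0$, a contradiction---while when $\frakp$ does not split completely the unique index-$2$ subfield of $\F_p(\pm\tunder)=\F_{\frakp^2}$ is $\F_\frakp$.

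Now I would apply Macbeath's classification. By (iii) and Theorem~\ref{MACTHM4}, $\pm\tunder$ is exceptional or (partly) projective. In the projective case, Proposition~\ref{MACTHM3} constrains $G$ up to conjugacy to $\PSL_2(\F_p(\pm\tunder))$ or $\PGL_2(k_0)$; the inclusion $G\subseteq\PGL_2(\F_\frakp)$ excludes $\PSL_2(\F_{\frakp^2})$, and combining this with (i), (ii), (iv) pins down $G=\PSL_2(\F_\frakp)$ when $\frakp$ splits completely in $F(a,b,c)$ and $G=\PGL_2(\F_\frakp)$ otherwise. I expect the exceptional case to be the real obstacle: when $(a^\sharp,b^\sharp,c^\sharp)$ is one of the order triples in \eqref{exceptionaltriples}, $G$ could a priori be a spherical group rather than the full (projective) linear group. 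Hyperbolicity of $(a,b,c)$ restricts this to a short list of shapes---it requires either $c=\infty$ with $c^\sharp=p$ small, or one of the exceptional \emph{hyperbolic} triples such as $(3,4,4)$---and in each case I would argue by hand, using the specific values $\overline{\lambda_{2s}}$, the splitting of $\frakp$ in $F(a,b,c)$, and the exceptional isomorphisms $A_4\cong\PSL_2(\F_3)$, $S_4\cong\PGL_2(\F_3)$, $A_5\cong\PSL_2(\F_5)\cong\PGL_2(\F_4)$, that $G$ still coincides with the asserted $\PSL_2(\F_\frakp)$ or $\PGL_2(\F_\frakp)$, so that these low-degree coincidences get absorbed rather than contradicted.

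Finally, with $\Gammabar(\frakp)=\ker\phi$ torsion-free of finite index in $\Deltabar(a,b,c)$, put $X(a,b,c;\frakp)=\Gammabar(\frakp)\backslash\calH^{(*)}$, a compact Riemann surface. The natural map $X(a,b,c;\frakp)\to\Deltabar(a,b,c)\backslash\calH^{(*)}\cong\PP^1$ is a Galois branched cover with group $\Deltabar(a,b,c)/\Gammabar(\frakp)\cong G$, ramified over exactly three points of $\PP^1$ with ramification indices $(a^\sharp,b^\sharp,c^\sharp)$; after a linear fractional change on the target it is a \Belyi\ map, and the induced isomorphism $G\xrightarrow{\sim}\Gal(f)$ makes it a $G$-Galois \Belyi\ map (cf.\ Proposition~\ref{wolfartequiv}). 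Theorem~A is the special case $\frakp\nmid 2abc$, in which the excluded-shape hypothesis is vacuous.
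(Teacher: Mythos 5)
Your proposal is correct and follows essentially the same route as the paper: construct the homomorphism via Proposition \ref{pglyeah}, rule out commutativity using $\beta\not\equiv 0\pmod{\frakP}$ from Lemma \ref{pnmid}, identify the image via Macbeath's classification (Theorems \ref{MACTHM4} and \ref{MACTHM3}) together with the same field-theoretic analysis of (ir)regularity, and dispose of the exceptional order triples separately. The only cosmetic differences are that you detect the $\PSL_2$/$\PGL_2$ dichotomy by computing reduced norms rather than by intersecting $\PSL_2(\F_\frakP)$ with $\PGL_2(\F_\frakp)$, and that you propose to settle the exceptional triples by hand via the exceptional isomorphisms where the paper instead invokes their arithmeticity and Shimura curve theory.
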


\begin{proof}
Let $(a,b,c)$ be a hyperbolic triple, so that $a,b,c \in \Z_{\geq 2} \cup \{\infty\}$ satisfy $a \leq b \leq c$ and $\chi(a,b,c)=1-1/a-1/b-1/c>0$.  Let $\frakp$ be a prime of the field
\[ E(a,b,c)=\Q(\lambda_a,\lambda_b,\lambda_c,\lambda_{2a}\lambda_{2b}\lambda_{2c}) \]
and let $\frakP$ be a prime of
\[ F(a,b,c)=\Q(\lambda_{2a},\lambda_{2b},\lambda_{2c}) \]
above $\frakp$ above the rational prime $p \nmid abc$.

Then by Proposition \ref{pglyeah}, we have a homomorphism
\[ \phi:\Deltabar(a,b,c) \to \PSL_2(\F_\frakP) \]
with $\tr \phi(\deltabar_s) \equiv \pm \lambda_{2s} \pmod{\frakP}$ for $s=a,b,c$ whose image lies in the subgroup $\PSL_2(\F_\frakP) \cap \PGL_2(\F_\frakp)$.  We have $[\F_\frakP:\F_\frakp] \leq 2$ and
\begin{equation} \label{PGLorPSL}
\PSL_2(\F_\frakP) \cap \PGL_2(\F_\frakp) =
\begin{cases}
\PSL_2(\F_\frakp), & \text{if $\F_\frakP=\F_\frakp$}; \\
\PGL_2(\F_\frakp), & \text{if $[\F_\frakP:\F_\frakp]=2$.}
\end{cases}
\end{equation}

Let $\Deltabar(a,b,c;\frakp)$ be the kernel of the homomorphism $\phi: \Deltabar(a,b,c) \to \PSL_2(\F_\frakP)$.  The generators $\deltabar_s$ of $\Deltabar$ (for $s=a,b,c$) give rise to a triple $\gunder=(g_1,g_2,g_3)$, namely  $g_1=\phi(\deltabar_a)$, $g_2=\phi(\deltabar_b)$, $g_3=\phi(\deltabar_c)$, with trace triple up to signs
\[ \pm \tunder=(\pm t_1, \pm t_2, \pm t_3) \equiv (\pm \lambda_{2a},\pm \lambda_{2b},\pm \lambda_{2c}) \pmod{\frakP}. \]

The map of complex algebraic curves
\[ f:X=X(a,b,c;\frakp)=\Deltabar(a,b,c;\frakp) \backslash \calH \to
\Deltabar(a,b,c) \backslash \calH \cong \PP^1 \]
is a $G$-Galois \Belyi\ map by construction, where $G=\Deltabar(a,b,c)/\Deltabar(a,b,c;\frakp)$.  It is our task to specify $G$ by our understanding of subgroups of $\PSL_2(\F_{\frakP})$.

First, we dispose of the exceptional triples.  Since $(a,b,c)$ is hyperbolic, this leaves the five triples
\[ (a^\sharp,b^\sharp,c^\sharp)=(3,4,4),(2,5,5),(5,5,5),(3,3,5),(3,5,5). \]
Each of these triples is arithmetic, by work of Takeuchi \cite{Takeuchi}, and the result follows from well-known properties of Shimura curves---something that could be made quite explicit in each case, if desired.

Second, we claim that the triple $\pm \tunder$ is not commutative.  From (\ref{commform}), the triple $\tunder$ is commutative if and only if
\[ \beta=\lambda_{2a}^2+\lambda_{2b}^2+\lambda_{2c}^2 - \lambda_{2a}\lambda_{2b}\lambda_{2c}-4 = 0 \]
in $k$.  But $\beta \Z_F$ is the reduced discriminant of the order $\calO$ arising in Section \ref{quatconst}!  And as in the proof of Lemma \ref{pnmid}, from the factorization
\[ \beta=\left(\frac{\zeta_{2b}\zeta_{2c}}{\zeta_{2a}}-1\right)\left(\frac{\zeta_{2a}\zeta_{2c}}{\zeta_{2b}}-1\right)
\left(\frac{\zeta_{2a}\zeta_{2b}}{\zeta_{2c}}-1\right)\left(\frac{1}{\zeta_{2a}\zeta_{2b}\zeta_{2c}}-1\right) \]
we see that the argument in the case $\frakP_K \mid 2$ applies to show that $\beta \not\equiv 0 \pmod{\frakP}$.

Next, we show that orders of $g_1,g_2,g_3$ are $a,b,c$.  Let $s=a,b,c$ and write $g$ for the corresponding element.  We have $\tr \phi(\deltabar_s) \equiv \pm \lambda_{2s} \pmod{\frakP}$.  If $g=1$, then the image is commutative, and this possibility was just ruled out.  We cannot have $g$ unipotent, as then its order $p$ is the prime below $\frakP$, contradicting the hypothesis that $\frakp \nmid abc$.  So we are left with only the possibility that $g$ is a semisimple element of $\PSL_2(\F_{\frakP})$: but then then the order is indeed determined by the trace (see Section \ref{sec:conjclass}), and an element with trace $\lambda_{2s}$ necessarily has order $s$.
The fact that the orders of the images are $a,b,c$ then implies that the ramification indices are $(a,b,c)$.

We continue now with the remainder of the proof of the theorem.  By Proposition \ref{MACTHM4}, we conclude that the triple $\gunder$ is projective.  Then, by Proposition \ref{MACTHM3}(a)--(b), we have either that the image of $\phi$ is either equal to $\PSL_2(\F_\frakP)$, or that the trace triple is irregular and the image of $\phi$ is equal to $\PGL_2(k)$, where $[\F_\frakP:k]=2$.  In the first case, by (\ref{PGLorPSL}) we must have $\F_\frakP=\F_\frakp$ and so the result holds.  In the second case, if $[\F_\frakP:\F_\frakp]=2$ then again by (\ref{PGLorPSL}) the image is already contained in $\PGL_2(\F_\frakp)$ so we must have $k=\F_\frakp$ and again the result holds.

So to conclude, we must rule out the possibility that the trace triple is irregular and that $\F_\frakP=\F_\frakp$.
Since $\F_\frakP=\F_\frakp$, we have
\[ \F_p(t_1,t_2,t_3)=\F_p(t_1^2,t_2^2,t_3^2,t_1t_2t_3). \]
Let $k$ be the subfield of $\F_\frakp$ with $[\F_\frakp:k]=2$.  Then we have
\[ \F_p(\tunder^2)=\F_p(t_1^2,t_2^2,t_3^2) \subseteq k \subseteq \F_p(t_1,t_2,t_3)=\F_\frakp. \]
But we have $[\F_\frakp:\F_p(\tunder^2)] \leq 2$ so $k=\F_p(\tunder^2)$.  If now the triple $\tunder$ is irregular, then without loss of generality (in this argument) we may suppose that $t_1 \in k$ and $t_2,t_3$ are either zero or squareroots of nonsquares in $k$.  But then $t_1t_2t_3 \in k$, so
\[ k=\F_p(\tunder^2)=\F_p(\tunder^2, t_1t_2t_3) = \F_p(t_1,t_2,t_3), \]
a contradiction.
\end{proof}

\begin{cor}
We have
\[ [\Deltabar:\Deltabar(\frakp)]=[\calO_1^{\times}/\{\pm 1\}:\calO_1(\frakP)^{\times}/\{\pm 1\}] \cdot
\begin{cases}
1, & \text{ if $\F_\frakP=\F_\frakp$}; \\
2, & \text{ if $[\F_\frakP:\F_\frakp]=2$}.
\end{cases} \]
\end{cor}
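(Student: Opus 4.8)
The plan is to recognize both quantities in the stated identity as orders of finite quotients and then to compare them. The left-hand side is immediate from the construction in the proof of Theorem~\ref{thmaorders}: there $\Deltabar(\frakp)$ is defined to be the kernel of the homomorphism $\phi\colon\Deltabar(a,b,c)\to\PSL_2(\F_\frakP)$, so that
\[ [\Deltabar:\Deltabar(\frakp)]=\#\phi\bigl(\Deltabar(a,b,c)\bigr), \]
and the same proof already identifies this image: one has $\phi(\Deltabar)=\PSL_2(\F_\frakp)$ when $\frakp$ splits completely in $F(a,b,c)$, equivalently when $\F_\frakP=\F_\frakp$, and $\phi(\Deltabar)=\PGL_2(\F_\frakp)$ otherwise, equivalently when $[\F_\frakP:\F_\frakp]=2$. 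Thus $[\Deltabar:\Deltabar(\frakp)]$ equals $\#\PSL_2(\F_\frakp)$ in the first case and $\#\PGL_2(\F_\frakp)$ in the second.

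For the right-hand side I would invoke strong approximation. Under the running hypotheses (in particular $\frakp\nmid abc$, and, when $\frakp\mid 2$, the exclusion of the triples $(mk,m(k+1),mk(k+1))$), Lemma~\ref{pnmid} together with Corollary~\ref{pnmidO} shows that the relevant quaternion order is maximal, hence split, at $\frakp$. Its reduction therefore fits into a short exact sequence of exactly the form used just before \eqref{maptofrakP}, the surjectivity being supplied by strong approximation for the (indefinite) quaternion algebra in question \cite[Th\'eor\`eme III.4.3]{Vigneras}. This identifies
\[ [\calO_1^{\times}/\{\pm 1\}:\calO_1(\frakP)^{\times}/\{\pm 1\}]=\#\PSL_2(\F_\frakp). \]

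Putting the two computations together, the asserted multiplicative correction is precisely the ratio $\#\phi(\Deltabar)/\#\PSL_2(\F_\frakp)$: it equals $1$ in the split case $\F_\frakP=\F_\frakp$, and it equals $[\PGL_2(\F_\frakp):\PSL_2(\F_\frakp)]=2$ in the case $[\F_\frakP:\F_\frakp]=2$, since there $\#\F_\frakp$ is odd and the two groups genuinely differ by a factor of $2$. This yields the formula.

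The points that I expect to need care are not conceptual but organizational. First, the $\{\pm1\}$-normalization on both sides: the relevant fact is that $-1$ lies in the principal congruence subgroup only when $\frakp\mid2$, which is exactly why, after quotienting by $\{\pm1\}$, the indices are governed by $\PSL_2$ rather than $\SL_2$, and one should track this uniformly on both sides. Second, one must confirm that the quaternion algebra arising here is split at the archimedean place used for the Fuchsian embedding (hence indefinite), so that strong approximation applies and the order is split at $\frakp$. Both of these are already in place from Sections~\ref{quatconst} and~\ref{sec:proofs}, so beyond this bookkeeping the argument should be short.
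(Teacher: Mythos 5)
Your overall strategy---read both sides as orders of congruence quotients, the left via Theorem \ref{thmaorders} and the right via strong approximation, and observe that the discrepancy is $[\PGL_2(\F_\frakp):\PSL_2(\F_\frakp)]$---is surely the intended one; the paper offers no separate argument for this corollary. But there is a genuine gap at the single step where all the content sits, namely the identification
\[ [\calO_1^{\times}/\{\pm 1\}:\calO_1(\frakP)^{\times}/\{\pm 1\}]=\#\PSL_2(\F_\frakp). \]
The exact sequence preceding \eqref{maptofrakP}, whose surjectivity is what strong approximation supplies, lives at $\Z_F$-level: it identifies the quotient $\bigl(\calO_1^\times/\{\pm1\}\bigr)/\calO(\frakP)_1^\times$ with $\PSL_2(\Z_F/\frakP)=\PSL_2(\F_\frakP)$, not with $\PSL_2(\F_\frakp)$. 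In the first case of the corollary these coincide and your argument is fine. In the second case, however, $\F_\frakP$ is a quadratic extension of $\F_\frakp$, and with $q=\#\F_\frakp$ one has $\#\PSL_2(\F_{q^2})/\#\PGL_2(\F_q)=q(q^2+1)/2$, nowhere near $1/2$; indeed, since $\Deltabar/\Deltabar(\frakp)$ embeds into $\bigl(\calO_1^\times/\{\pm1\}\bigr)/\calO(\frakP)_1^\times$ by \eqref{maptofrakP}, the left-hand index can never \emph{exceed} the $\Z_F$-level index, so no factor of $2$ can arise from that comparison. You flagged the ``$\frakp$ versus $\frakP$'' bookkeeping as a point needing care and then did not carry it out; here it is not mere bookkeeping.

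The repair is to normalize the right-hand side over $E$ rather than $F$: apply strong approximation to the order $\Lambda=\Z_{E}[\Delta^{(2)}]$ in $A$ at the prime $\frakp$ (which is coprime to the discriminant of $\Lambda$ by Corollary \ref{pnmidO} and Lemma \ref{pnmid}), so that the principal congruence quotient is $\PSL_2(\Z_E/\frakp)=\PSL_2(\F_\frakp)$ on the nose. With that first factor in hand, your final ratio computation is correct: by Theorem \ref{thmaorders} the image of $\Deltabar$ has order $\#\PSL_2(\F_\frakp)$ or $\#\PGL_2(\F_\frakp)=2\,\#\PSL_2(\F_\frakp)$, giving the asserted factor of $1$ or $2$ (note that the latter equality also needs $\#\F_\frakp$ odd, which you assert but should justify, since Theorem \ref{thmaorders} permits $\frakp\mid 2$ and then $\PGL_2=\PSL_2$ and the factor degenerates to $1$). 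The remaining points you raise---indefiniteness of the algebra, the role of $-1$, splitness at the relevant prime---are handled correctly and are indeed already in place from Sections \ref{quatconst} and \ref{sec:proofs}.
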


Next, we prove statements (a)--(c) of Theorem B, in two parts.  (In the next section, we will show that the corresponding field extension is unramified away $p$.)  To this end, let $X$ be a curve of genus $g \geq 2$ and let $f:X \to \PP^1$ be a $G$-Galois \Belyi\ map with $G\cong \PGL_2(\F_q)$ or $G \cong \PSL_2(\F_q)$.  Let $(a,b,c)$ be the ramification indices of $f$.

\begin{thm} \label{allbutthma}
Let $r$ be the order of $\Frob_p$ in $\Gal(F_{p'}(a,b,c)/\Q)$.  Then
\[ q = \begin{cases}
\sqrt{p^r}, & \text{ if $G \cong \PGL_2(\F_q)$;} \\
p^r, & \text{ if $G \cong \PSL_2(\F_q)$.}
\end{cases} \]
\end{thm}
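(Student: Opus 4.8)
\textbf{Plan of proof of Theorem~\ref{allbutthma}.} The idea is to recover a generating triple of $G$ from the cover $f$, transport it into $\SL_2$ via Macbeath's machinery, and identify $q$ with the size of the field generated over $\F_p$ by the traces of that triple.

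\emph{Step 1: the monodromy triple.} Since $f\colon X\to\PP^1$ is a $G$-Galois cover unramified outside $\{0,1,\infty\}$ with ramification indices $(a,b,c)$, the Riemann Existence Theorem (cf.\ \S5) yields a generating triple $\gunder=(g_1,g_2,g_3)$ of $G$ with $g_1g_2g_3=1$ and with $g_i$ of order $a,b,c$ respectively, the ramification index over a branch point of a Galois cover being the order of the corresponding monodromy element; equivalently $X\cong\Gamma\backslash\calH$ for $\Gamma=\Ker(\Deltabar(a,b,c)\twoheadrightarrow G)$ by Proposition~\ref{wolfartequiv}. The hypothesis $g\ge 2$ is what places us in this situation, and together with $G\cong\PSL_2(\F_q)$ or $\PGL_2(\F_q)$ it guarantees that $G$ is noncommutative.

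\emph{Step 2: passage to $\SL_2$ and Macbeath's dichotomy.} If $G\cong\PSL_2(\F_q)$ I lift $\gunder$ to a group triple in $\SL_2(\F_q)$; if $G\cong\PGL_2(\F_q)$ (so $p$ is odd) I first apply the embedding $\PGL_2(\F_q)\hookrightarrow\PSL_2(\F_{q^2})$, $\pm h\mapsto\pm(\det h)^{-1/2}h$, and then lift into $\SL_2(\F_{q^2})$. In either case I obtain a trace triple up to signs $\pm\tunder=(\pm t_1,\pm t_2,\pm t_3)$, independent of the sign choices of the lifts, and it is not commutative since $G$ is noncommutative and not affine. By Macbeath's classification (Theorem~\ref{MACTHM4}) it is then either ``only exceptional'', in which case the group generated is a finite spherical triangle group and hence $G$ is one of $\PSL_2(\F_3)\cong A_4$, $\PGL_2(\F_3)\cong S_4$, $\PSL_2(\F_5)\cong\PGL_2(\F_4)\cong A_5$ (these correspond to the five hyperbolic exceptional order triples $(3,4,4),(2,5,5),(5,5,5),(3,3,5),(3,5,5)$, and the formula for $q$ is checked directly in these finitely many cases), or it is (partly) projective and not exceptional. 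In the latter case, choosing a lift $\tunder$ and applying Proposition~\ref{MACTHM3}(a)--(b), $G$ is conjugate in $\PSL_2(\overline{\F}_p)$ either to $\PSL_2(\F_p(\tunder))$ ($\PSL_2$-type) or to $\PGL_2(k_0)$ with $[\F_p(\tunder):k_0]=2$ ($\PGL_2$-type). Matching this with $G$, and using that $\PGL_2(\F_q)\not\cong\PSL_2(\F_m)$ for any $m$ when $q$ is odd, forces $\F_p(\tunder)=\F_q$ in the first case and $\F_p(\tunder)=\F_{q^2}$ in the second. Hence $q=p^{[\F_p(\tunder):\F_p]}$ when $G\cong\PSL_2(\F_q)$ and $q=\sqrt{p^{[\F_p(\tunder):\F_p]}}$ when $G\cong\PGL_2(\F_q)$, and it remains to prove $[\F_p(\tunder):\F_p]=r$.

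\emph{Step 3: the trace field has degree $r$.} Fix $i$ and let $s\in\{a,b,c\}$ be the order of $g_i$. If $p\mid s$ then necessarily $s=p$, since in characteristic $p$ the only elements of $\PSL_2$ or $\PGL_2$ whose order is divisible by $p$ are unipotent of order exactly $p$; then $g_i$ is unipotent, $t_i\in\{\pm2\}\subseteq\F_p$ contributes nothing, and $s$ is omitted from $F_{p'}(a,b,c)$. If $p\nmid s$ then $g_i$ is semisimple, and a lift of $g_i$ has eigenvalues $\mu,\mu^{-1}\in\overline{\F}_p$ with the image of $\mu$ in $\overline{\F}_p^{\times}/\{\pm1\}$ of order $s$, so $\mu$ has exact order $2s$, or exact order $s$ when $s$ is odd. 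In both subcases $\mu+\mu^{-1}$ is a Galois conjugate of $\lambda_{2s}$ reduced modulo $p$ — for $s$ odd one uses $\Q(\zeta_{2s})=\Q(\zeta_s)$, whence $\Q(\lambda_{2s})=\Q(\lambda_s)$ — so that $\F_p(t_i)$ is the residue field at $p$ of $\Q(\lambda_{2s})$, of degree over $\F_p$ the order of $\Frob_p$ in $\Gal(\Q(\lambda_{2s})/\Q)$. Taking the compositum over those $i$ with $p\nmid s$ gives $[\F_p(\tunder):\F_p]=\lcm$ of these orders, which is exactly the order $r$ of $\Frob_p$ in $\Gal(F_{p'}(a,b,c)/\Q)$. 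Combined with Step~2 this proves the theorem.

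\textbf{Main obstacle.} The delicate point is Step~3: the ramification index $s$ does not determine the conjugacy class of $g_i$ (for $s$ odd there are two admissible traces, from primitive $s$-th versus $2s$-th roots of unity, and for larger $s$ several conjugacy classes of order $s$), so one must verify that all these choices generate the same residue field at $p$. This reduces to the elementary facts that $\Q(\zeta_{2s})=\Q(\zeta_s)$ for $s$ odd and that Galois-conjugate algebraic integers reduce to Galois-conjugate residues, but it requires careful bookkeeping of signs (the $\pm$ ambiguity and Macbeath's parity phenomena) and careful isolation of the finitely many exceptional order triples that lie outside the projective case.
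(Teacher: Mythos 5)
Your overall strategy is the same as the paper's: extract the monodromy triple, pass to a trace triple in $\SL_2$, invoke Macbeath (Proposition \ref{MACTHM3}) to identify $\F_p(\tunder)$ with $\F_q$ or $\F_{q^2}$, and compute $[\F_p(\tunder):\F_p]$ as the order of $\Frob_p$ in $\Gal(F_{p'}(a,b,c)/\Q)$. Your Step 3 is fine, and in fact more careful than the paper's one-line treatment of the unipotent classes and of the several conjugacy classes sharing a given order. The gap is in Step 2: your dichotomy ``only exceptional'' versus ``projective and not exceptional'' is not exhaustive, because exceptionality and projectivity of a trace triple are not mutually exclusive, and trace triples that are both occur squarely within the scope of the theorem --- for instance the $(3,4,4)$ triple generating $\PSL_2(\F_7)$ (genus $15$ in Table \ref{tablelowgenus}) and the $(2,5,5)$ triple generating $\PSL_2(\F_9)$ (genus $19$). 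For such a triple your first branch asserts, incorrectly, that the group generated is a finite spherical group: an exceptional trace triple is merely one admitting \emph{some} group triple whose order triple appears in \eqref{exceptionaltriples}; it does not force \emph{your} group triple to generate a small group. Meanwhile your second branch excludes these cases by hypothesis. So as written the argument reaches either a false intermediate claim or no conclusion for a nonempty family of covers.

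The repair is immediate and is what the paper implicitly does: Proposition \ref{MACTHM3} as stated places its hypothesis on the group triple --- that $\pm\langle g_1,g_2,g_3\rangle$ is a projective subgroup --- not on the trace triple. In your situation the generated subgroup is all of $\PSL_2(\F_q)$, respectively the image of $\PGL_2(\F_q)$ in $\PSL_2(\F_{q^2})$, which is projective by definition, so parts (a)--(b) apply unconditionally and the whole exceptional/commutative discussion can be deleted (retaining only the order comparison ruling out the unwanted alternative in part (b), which you do carry out). With that modification your proof coincides with the paper's.
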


\begin{proof}
By work in Section 2, there exists a finite index, normal subgroup $\Gammabar \subseteq \Deltabar(a,b,c)$ such that $\Deltabar(a,b,c)/\Gammabar \cong G$ and the map $X \to \PP^1$ is the map $\Gammabar \backslash \calH \to \Deltabar(a,b,c) \backslash \calH$.  In this way, we have identified the images in $G$ of the monodromy at the three ramification points with the elements $\deltabar_a,\deltabar_b,\deltabar_c$.  Thus, by hypothesis, the images of the triple $(\deltabar_a \Gammabar,\deltabar_b \Gammabar,\deltabar_c \Gammabar)$ in $G$ generates $G$.  This triple lifts to the triple $(-\delta_a \Gammabar,\delta_b \Gammabar,\delta_c \Gammabar)$ in $\SL_2(\F_q)$ with corresponding trace triple
\[ \tunder \equiv (-\lambda_{2a}, \lambda_{2b}, \lambda_{2c}) \pmod{\frakp} \]
for $\frakp$ a prime above $p$ in the field $F_{p'}(a,b,c)$.

If $G \cong \PSL_2(\F_q)$, then Proposition \ref{MACTHM3}(a) implies that $q=\#\F_p(\tunder)=p^r$; this is the residue class field of the prime $\frakp$ above and so $r$ is equal to its residue degree, equal to the order of the Frobenius $\Frob_p$ in the field.  Put another way, $r=\log_p q$ is the least common multiple of the orders of $p$ in $(\Z/2s\Z)^\times/\{\pm 1\}$ for $s=a,b,c$, which is the order of $\Frob_p$ in $\Gal(F_{p'}(a,b,c)/\Q)$, as claimed.  If instead $G \cong \PGL_2(\F_{\sqrt{q}})$, then $r$ is twice this degree.
\end{proof}

Now we prove statements (b)--(c) of Theorem B concerning fields of moduli.

\begin{thm}
The map $f$ is defined over its field of moduli $M(X,f)$ and $M(X,f)$ is an extension of $D_{p'}(a,b,c)^{\la \Frob_p \ra}$ of degree $d_{(X,f)} \leq 2$.  If $a=2$ or $q$ is even, then $d_{(X,f)}=1$.

The map $f$ together with its Galois group $\Gal(f) \cong G$ is defined over its field of moduli $M(X,f,G)$.  Let
\[ D_{p'}(a,b,c)\{\sqrt{p^*}\}=\begin{cases}
D_{p'}(a,b,c)(\sqrt{p^*}), &\text{ if $p \mid abc$, $pr$ is odd, and $G \cong \PSL_2(\F_q)$}; \\
D_{p'}(a,b,c) &\text{ otherwise.}
\end{cases} \]
Then $M(X,f,G)$ is an extension of $D_{p'}(a,b,c)\{\sqrt{p^*}\}$ of degree $d_{(X,f,G)} \leq 2$.  If $q$ is even or $p \mid abc$ or $G \cong \PGL_2(\F_q)$, then $d_{(X,f,G)} =1$.
\end{thm}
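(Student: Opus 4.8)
The plan is to deduce (b) and (c) from the weak-rigidity package of Section 5, the conjugacy-class computations of Section \ref{sec:conjclass}, and the orbit counts of Proposition \ref{Autactstransitive}. First, the two ``defined over its field of moduli'' assertions: that $f$ itself descends to $M(X,f)$ is Lemma \ref{canbedefined}, and for the $G$-refined statement I would apply Lemma \ref{lem:canbedefinedG}, so it suffices to check $C_{\Aut(X)}(G)=\{1\}$. Since $g(X)\ge 2$, Section 2 gives $\Aut(X)=\Deltabar'/\Gammabar$ with $\Deltabar'\supseteq\Deltabar(a,b,c)$ as in \eqref{fam2}--\eqref{fam1} and $G=\Deltabar(a,b,c)/\Gammabar$; an $h\in\Aut(X)$ centralizing $G$ descends to an automorphism $\bar h$ of $X/G\cong\PP^1$ with $f\circ h=\bar h\circ f$, so $\bar h$ permutes $\{0,1,\infty\}$ preserving ramification indices, and if $\bar h\ne\mathrm{id}$ then $\langle G,h\rangle=\Deltabar''/\Gammabar$ for some $\Deltabar(a,b,c)\subsetneq\Deltabar''$, whereas inspection of \eqref{fam2}--\eqref{fam1} shows the conjugation action of $\Deltabar''$ on $\Deltabar(a,b,c)$ is never central. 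Hence $\bar h=\mathrm{id}$, $h$ is a deck transformation, and $h\in Z(G)=\{1\}$, so $(f,\Gal(f)\cong G)$ descends to $M(X,f,G)$.

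Next I would identify the base fields. Write $f$ as $\Gammabar\backslash\calH\to\Deltabar(a,b,c)\backslash\calH\cong\PP^1$ with $\Gammabar\trianglelefteq\Deltabar(a,b,c)$ and $G=\Deltabar(a,b,c)/\Gammabar$, and let $\Cunder=(C_1,C_2,C_3)$ be the triple of conjugacy classes of the images of $\deltabar_a,\deltabar_b,\deltabar_c$, so $C_i$ has order $s_i\in\{a,b,c\}$. Proposition \ref{WRWR} applied to $\Cunder$ gives $[M(X,f):F\sbwk(\Cunder)]\le d\sbwk$ and $[M(X,f,G):F\sbrat(\Cunder)]\le d\sbrat$ with $d\sbwk=\#\Sigma(\Cunder)/\!\Aut(G)$ and $d\sbrat=\#\Sigma(\Cunder)/\Inn(G)$ read off from the table of Proposition \ref{Autactstransitive} (legitimate since $\pm\tunder$ is neither commutative nor exceptional, the verification being the one in the proof of Theorem \ref{thmaorders}, reducing to $\beta\ne 0$ in the residue field and hyperbolicity; the finitely many hyperbolic types with exceptional $\pm\tunder$, namely $(3,4,4),(2,5,5),(5,5,5),(3,3,5),(3,5,5)$, are arithmetic and handled separately as there). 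A class of order coprime to $p$ is semisimple, one of order divisible by $p$ has order exactly $p$ and is unipotent (a Sylow $p$-subgroup being elementary abelian), so by Lemmas \ref{PGL2fieldofrat} and \ref{PSL2fieldofrat} a semisimple $C_i$ of order $s$ has $F\sbrat(C_i)=\Q(\lambda_s)$ and $F\sbwk(C_i)=\Q(\lambda_s)^{\langle\Frob_p\rangle}$, while a unipotent $C_i$ contributes $\Q(\sqrt{p^*})$ to the field of rationality precisely when $G\cong\PSL_2(\F_q)$ and $pr$ is odd (and $\Q$ otherwise) and always $\Q$ to the field of weak rationality. Taking composita — the rational stabilizer of $\Cunder$ being the intersection of those of the $C_i$ — gives $F\sbrat(\Cunder)=D_{p'}(a,b,c)\{\sqrt{p^*}\}$ on the nose; the weak case is more delicate since its stabilizer needs one $\varphi\in\Aut(G)$ for all three classes, but on the trace triple up to signs $\tau$ acts trivially (the trace pins down a semisimple class of $\PSL_2(\F_q)$) and $\sigma$ acts as $\Frob_p$, and the unipotent classes impose no further constraint (a uniform $\tau$-twist absorbs them, ``being a square in $\F_q^\times$'' being one condition on the exponent), so $H\sbwk(\Cunder)$ is generated by $\Frob_p$ and the stabilizer of $D_{p'}(a,b,c)$ and $F\sbwk(\Cunder)=D_{p'}(a,b,c)^{\langle\Frob_p\rangle}$.

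Given the base fields, (b) follows: $d\sbwk\le 2$ in every row, so $[M(X,f):D_{p'}(a,b,c)^{\langle\Frob_p\rangle}]=d_{(X,f)}\le 2$, with $d_{(X,f)}=1$ in the rows where $q$ is even or $a=2$. For (c), the table gives $d\sbrat\le 2$ — hence $d_{(X,f,G)}\le 2$ — in every row except $p$ odd, $a\ne 2$, $p\nmid abc$, $G\cong\PSL_2(\F_q)$, where it only gives $\le 4$; there the four elements of $\Sigma(\Cunder)/\Inn(G)$ pair off into two $\Aut(G)$-orbits, the two $\PSL_2(\F_q)$-orbits over a given lift $\tunder$ or $\tunder'=(-t_1,t_2,t_3)$ of $\pm\tunder$ being swapped by $\tau$, and I would show the orbit of $\Gal(\Qbar/F\sbrat(\Cunder))$ — which runs through the cyclotomic character, hence through Frobenius-type outer automorphisms only and never through a pure $\tau$ — has size $\le 2$, giving $d_{(X,f,G)}\le 2$. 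Finally $d_{(X,f,G)}=1$ when $q$ is even (first row: rigid), when $G\cong\PGL_2(\F_q)$ (then $\Out(G)=\langle\sigma\rangle$, and since $\F_p(\pm\tunder)$ is generated by the traces no Frobenius power produces the sign flip, so the triple is rigid), and when $p\mid abc$ (some $C_i$ is then unipotent and $\tau$ sends it to the \emph{other} unipotent class, so $\tau(\Cunder)\ne\Cunder$, which excludes the lift $\tunder'$ and leaves a rigid triple).

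The hard part, signalled throughout Sections 7--8, is the sign bookkeeping: establishing $F\sbwk(\Cunder)=D_{p'}(a,b,c)^{\langle\Frob_p\rangle}$ precisely — the per-class formulas only give $F\sbwk(\Cunder)\supseteq\prod_i F\sbwk(C_i)$, while the true value is in general strictly larger — and sharpening the crude counts of Proposition \ref{Autactstransitive} (from $\le 4$ to $\le 2$, and from $\le 2$ to $=1$ in the stated cases) both hinge on controlling precisely how the generators $\sigma$ (Frobenius) and $\tau$ ($\PGL_2$-twist) of $\Out(\PSL_2(\F_q))$ act on the lifts of a trace triple up to signs and on the pair of distinguished unipotent classes.
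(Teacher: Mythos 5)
Your skeleton is the paper's: Lemma \ref{canbedefined} and Lemma \ref{lem:canbedefinedG} for descent to the field of moduli, Proposition \ref{WRWR} together with Lemmas \ref{PGL2fieldofrat}--\ref{PSL2fieldofrat} to identify $F\sbrat(\Cunder)$ with $D_{p'}(a,b,c)\{\sqrt{p^*}\}$ and $F\sbwk(\Cunder)$ with $D_{p'}(a,b,c)^{\la \Frob_p \ra}$, and the table of Proposition \ref{Autactstransitive} for the degree bounds; your attention to the difference between $H\sbwk(\Cunder)$ and $\bigcap_i H\sbwk(C_i)$ is actually more careful than the paper's one-line appeal to Section 6. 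The argument diverges, and breaks, at the descent of the $G$-Galois structure. You invoke Lemma \ref{lem:canbedefinedG} after claiming $C_{\Aut(X)}(G)=\{1\}$ in all cases, deducing this from the assertion that for every inclusion $\Deltabar(a,b,c)\subsetneq\Deltabar''$ in \eqref{fam2}--\eqref{fam1} the conjugation action of $\Deltabar''$ on $\Deltabar(a,b,c)$ is non-central. But what is needed is non-centrality of the induced action on the \emph{finite quotient} $G=\Deltabar/\Gammabar$, and non-centrality upstairs does not pass to the quotient: the commutators can land in $\Gammabar$. This is not a hypothetical failure. The paper's own genus $3$ example with $(a,b,c)=(3,4,4)$ and $G=\PGL_2(\F_3)$ has $\Aut(X)\cong S_4\times C_2$, so $C_{\Aut(X)}(G)\cong C_2\neq\{1\}$, and the extra involution descends to the nontrivial automorphism of $\PP^1$ interchanging the two branch points of index $4$ --- exactly the configuration your argument purports to exclude. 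The paper only applies Lemma \ref{lem:canbedefinedG} directly for maximal triples, where $\Aut(X)=G$ by \eqref{ifmaxdelta}; for non-maximal triples it reduces to the normal inclusions of \eqref{fam1} and argues differently, obtaining the desired \Belyi\ map over the claimed field as a quotient of the cover attached to the enveloping maximal triangle group and checking that the fields $D_{p'}(a,b,c)^{\la\Frob_p\ra}$ and $D_{p'}(a,b,c)\{\sqrt{p^*}\}$ are the same for the subtriple and the maximal triple. Some version of that construction is needed; your shortcut does not supply it.

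A secondary caution: in the one row of Proposition \ref{Autactstransitive} where the table gives only $\#\Sigma(\Cunder)/\Inn(G)\leq 4$, you propose to cut the $\Gal(\Qbar/F\sbrat(\Cunder))$-orbit down to size $\leq 2$ on the grounds that the Galois action ``runs through the cyclotomic character, hence through Frobenius-type outer automorphisms only.'' Only the action on the conjugacy-class data $\Cunder$ is controlled cyclotomically (the branch cycle argument); the action on $\Sigma(\Cunder)/\Inn(G)$ for fixed $\Cunder$ is exactly the part of the dessins action that is \emph{not} governed by the cyclotomic character, so this step is unjustified as written. (The paper is terse at the same spot, recording only $d_{(X,f,G)}=\#\Sigma(\Cunder)/\Inn(G)\leq 4$ before asserting the cases ``as listed,'' so you have correctly located a delicate point --- but your proposed repair rests on a false principle and would need to be replaced by an argument that genuinely constrains the orbit, e.g.\ via the fibers of $\Sigma(\Cunder)/\Inn(G)\to\Sigma(\Cunder)/\!\Aut(G)$ over the class of $(X,f)$.)
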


\begin{proof}
As in the proof of Theorem \ref{thmaorders}, we may identify the images in $G$ of the monodromy at the three ramification points with the elements $\deltabar_a,\deltabar_b,\deltabar_c \in \Deltabar(a,b,c)$.  Let $\gunder=(\deltabar_a \Gammabar,\deltabar_b \Gammabar,\deltabar_c \Gammabar)$ and let $\Cunder$ be the corresponding conjugacy class triple in $G$.

We refer to the discussion in Section 5, specifically Proposition \ref{WRWR}, and recall the calculation of the field of weak rationality in Section 6 (Lemmas \ref{PGL2fieldofrat} and \ref{PSL2fieldofrat}).  Then by Proposition \ref{Autactstransitive}, we have $d_{(X,f)} = \#\Sigma(\Cunder)/\!\Aut(G) \leq 2$ and $d_{(X,f,G)} = \#\Sigma(\Cunder)/\Inn(G) \leq 4$, with the various cases as listed.

Finally, the map $f$ is defined over its field of moduli by Lemma \ref{canbedefined}.  And we would know that $f$ together with $\Gal(f) \cong G$ is defined over its field of moduli when the centralizer of $G$ in $\Aut X$ is trivial by Lemma \ref{lem:canbedefinedG}.  This holds for any maximal triple (see the discussion surrounding \eqref{fam2}--\eqref{fam1}).  We conclude in fact that it is therefore true for a nonmaximal triple, by the following reasoning.  We need only concern ourselves with the presence of extra automorphisms of the $G$-Galois \Belyi\ map; such an automorphism is given by an element of the maximal group centralizing $G$, and in particular it must normalize the non maximal triangle group.  So we reduce to the case where we have a normal inclusion of triangle groups, and this leaves only the three cases in \eqref{fam1} (the third obtained by a concatenation).  But the desired \Belyi\ map is defined over the field obtained from the maximal triple simply by taking the quotient by the smaller group, and the relevant fields $D_{p'}(a,b,c)^{\langle \Frob_p \rangle})$ and $D_{p'}(a,b,c)\{\sqrt{p^*}\}$ are the \emph{same} in each of these three cases!
\end{proof}

We now prepare for the proof of Theorem C with a discussion of the extension to composite $\frakN$.  Let $\frakN$ be an ideal of $\Z_F$ coprime to $6abc$.  Let $\frakn = \frakN \cap \Z_E$.  Then by Proposition \ref{pglyeah}, we have a homomorphism
\begin{equation} \label{eqn:phifrakN}
\phi_\frakN:\Deltabar \to \PSL_2(\Z_F/\frakN).
\end{equation}
If $\frakM \mid \frakN$ then $\phi_\frakM$ is obtained by the composition of $\phi_\frakN$ with the natural reduction map modulo $\frakM$.  Therefore these maps form a projective system and so we obtain in the limit a map
\[ \widehat{\phi}:\Deltabar \to \prod_{\frakP \nmid 6abc} \PSL_2(\Z_{F,\frakP}) \]
after composing with the Chinese remainder theorem.  The map $\widehat{\phi}$ is injective because $\Deltabar \hookrightarrow \calO_1^\times/\{\pm 1\} \hookrightarrow \PSL_2(\Z_{F,\frakP})$ for any prime $\frakP$.

For a prime $\frakP$ of $\Z_F$ with $\frakp = \frakP \cap \Z_E$ and an integer $e \geq 1$, let $P(\frakP^e) \subseteq \PSL_2(\Z_F/\frakP^e)$ be the group
\[ P(\frakP^e)=
\begin{cases}
\PSL_2(\Z_E/\frakp^e), &\text{ if $\F_\frakP=\F_\frakp$;} \\
\PGL_2(\Z_E/\frakp^e), &\text{ if $[\F_\frakP:\F_\frakp]=2$.}
\end{cases} \]
For an ideal $\frakN$ of $\Z_F$ let $P(\frakN)=\prod_{\frakP^e \parallel \frakN} P(\frakP^e)$, and let $\widehat{P}=\varprojlim_{\frakN} P(\frakN)$ be the projective limit of $P(\frakN)$ with respect to the $\frakN$ for $N \nmid 6abc$.  Then $\widehat{P}$ is a subgroup of $\prod_{\frakP \nmid 6abc} \PSL_2(\Z_{F,\frakP})$.

\begin{prop} \label{densesub}
The image of $\widehat{\phi}$ is dense in $\widehat{P}$.
\end{prop}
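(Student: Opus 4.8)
The plan is to reduce density to a finite-level statement and then bootstrap. Since $\widehat{P}=\varprojlim_{\frakN}P(\frakN)$ is profinite, the image of $\widehat{\phi}$ is dense in $\widehat{P}$ precisely when the composite $\phi_\frakN\colon\Deltabar\to P(\frakN)$ is surjective for every $\frakN$ coprime to $6abc$. So the real task is to describe the image of $\phi_\frakN$ exactly; using the Chinese Remainder Theorem to write $P(\frakN)$ in terms of its local pieces, I would first treat prime powers $\frakN=\frakP^e$ and then glue distinct prime powers together.

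For $\frakN=\frakP$ a prime, the equality $\phi_\frakP(\Deltabar)=P(\frakP)$ is precisely Theorem \ref{thmaorders}: the trace triple $\pm\tunder\equiv(\pm\lambda_{2a},\pm\lambda_{2b},\pm\lambda_{2c})\pmod{\frakP}$ is not commutative since $\beta\not\equiv0\pmod{\frakP}$ (Lemma \ref{pnmid}) and is not exceptional (the leftover hyperbolic triples $(3,4,4)$, $(2,5,5)$, $(5,5,5)$, $(3,3,5)$, $(3,5,5)$ are arithmetic and are handled separately in Theorem \ref{thmaorders} via Shimura curves), hence it is projective and Macbeath's Proposition \ref{MACTHM3} identifies the image as $\PSL_2(\F_\frakp)$ or $\PGL_2(\F_\frakp)$ according to whether $\F_\frakP=\F_\frakp$ or $[\F_\frakP:\F_\frakp]=2$, i.e.\ as $P(\frakP)$.

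For $\frakN=\frakP^e$ I would induct on $e$, the base case being the previous paragraph. Let $K=\ker\bigl(P(\frakP^e)\to P(\frakP^{e-1})\bigr)$ and $N=\phi_{\frakP^e}(\Deltabar)\cap K$. Then $N$ is normal in $\phi_{\frakP^e}(\Deltabar)$, which by induction surjects onto $P(\frakP^{e-1})$ and hence onto $P(\frakP)$; so $N$ is a submodule of $K$ for the adjoint action, which factors through $P(\frakP)$. Since $\frakP\nmid 6$ we have $p\geq 5$, so $K\cong\mathfrak{sl}_2$ over the residue field is an irreducible $P(\frakP)$-module and $N$ is $0$ or $K$. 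If $N=K$ then $\phi_{\frakP^e}(\Deltabar)\supseteq K$ surjects onto $P(\frakP^{e-1})$, so it is all of $P(\frakP^e)$. If $N=0$ then $\phi_{\frakP^e}(\Deltabar)$ maps isomorphically onto $P(\frakP^{e-1})$, splitting the reduction $P(\frakP^e)\to P(\frakP^{e-1})$; this is impossible for $p\geq 5$, since a regular unipotent then has no lift of order $p$ (as $\sum_{j=0}^{p-1}j^2\equiv 0\pmod p$ for $p\geq 5$ — the standard non-splitting lemma for $\SL_2$ mod $p^e$). Hence $\phi_{\frakP^e}(\Deltabar)=P(\frakP^e)$.

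Finally, for $\frakN=\prod_i\frakP_i^{e_i}$, the image $\phi_\frakN(\Deltabar)$ is a subgroup of $\prod_iP(\frakP_i^{e_i})$ surjecting onto each factor, so by Goursat's lemma it is determined by the gluings along pairwise common quotients, and the point is to show these are exactly the ones forced on $\widehat{P}$. The nonabelian composition factors $\PSL_2(\F_{\frakP_i})$ are pairwise non-isomorphic when the residue fields differ ($q\geq 5$ rules out the exceptional isomorphisms of $A_5$ and $A_6$), and when they agree the monodromy maps at $\frakP_i,\frakP_j$ are related by an automorphism only if $\frakp_i=\frakP_i\cap\Z_E$ and $\frakp_j=\frakP_j\cap\Z_E$ coincide: since $E/\Q$ is abelian and the conjugates of each unit $\lambda_s+2$ reduce separably at our primes, the reductions of $\lambda_a+2,\lambda_b+2,\lambda_c+2$ at $\frakp_i$ and $\frakp_j$ are simultaneously $\Frob_p$-conjugate only for $\frakp_i=\frakp_j$, and then $\frakp_i$ splits in $F$ and $\phi_{\frakP_i},\phi_{\frakP_j}$ genuinely agree up to conjugacy because the embedding $\Deltabar\hookrightarrow A^\times/E^\times$ of Proposition \ref{defnembedyeah} factors through $E$ — this is the diagonal relation encoded in $\widehat{P}$. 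The remaining abelian gluings run through the $\PGL/\PSL$-sign characters $\deltabar_s\mapsto[\lambda_s+2\in\F_{\frakp_i}^{\times 2}]$, which all factor through the single character $\Deltabar\to\Deltabar^{\mathrm{ab}}\to\Z/2\Z$ and are thereby correlated exactly as $\widehat{P}$ records. I expect the crux to be this last step — pinning down precisely which identifications between distinct primes are unavoidable (the "effective control over the closure" that distinguishes Theorem C from Kucharczyk's statement) — together with the non-splitting input in the $\frakP^e$-bootstrap.
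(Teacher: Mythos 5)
Your reduction to finite level, your treatment of a single prime via Theorem \ref{thmaorders}, and your prime-power bootstrap all track the paper: the last of these is precisely the content of Lemma \ref{lemsurj}, which the paper imports from Serre (and whose ``alternate proof'' sketched there is essentially your irreducibility-of-$\mathfrak{sl}_2$-plus-non-splitting argument). The genuine divergence is at composite level. The paper does not invoke Goursat at all: for coprime $\frakM,\frakN$ it argues by strong approximation that $\calO(\frakM)_1^\times\calO(\frakN)_1^\times=\calO_1^\times$, deduces $\Deltabar(\frakM)\Deltabar(\frakN)=\Deltabar$, and inducts on the number of prime factors. Your Goursat route is legitimate in principle, but as written it is exactly where your proof fails to close.

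Two concrete problems. First, the claim that the reductions of $\lambda_a+2,\lambda_b+2,\lambda_c+2$ at distinct primes of $E$ are never simultaneously $\Frob_p$-conjugate is asserted rather than proved: you need injectivity of reduction on the Galois conjugates of the $\lambda_{2s}$ (this does follow from $\frakp\nmid abc$, but must be said), you must handle the $\pm$ ambiguities inherent in $\PSL_2$, and you must bridge the possible index-$2$ gap between $D(a,b,c)$ and $E(a,b,c)$ --- Frobenius-conjugacy on $\Q(\lambda_a,\lambda_b,\lambda_c)$ alone does not pin down the prime of $E$. Second, and more seriously, your closing sentence about the abelian gluings does not do what you need. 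The group $\widehat{P}$ is defined as a plain product over primes and records no correlations; but, as you yourself observe, the sign characters $\chi_{\frakp_i}\colon\Deltabar\to\PGL_2(\F_{\frakp_i})/\PSL_2(\F_{\frakp_i})\cong\Z/2\Z$ all factor through the finite quotient $\Delta/\Delta^{(2)}$ of \eqref{take124}, which has at most three nontrivial characters. Two $\PGL_2$-type primes inducing the \emph{same} character (unavoidable, e.g.\ whenever $\Delta/\Delta^{(2)}\cong\Z/2\Z$ and both primes are nonsplit in $F/E$) confine the image of $\Deltabar$ in $\PGL_2(\F_{\frakp_1})\times\PGL_2(\F_{\frakp_2})$ to a proper index-$2$ subgroup; this is an obstruction to surjectivity onto $P(\frakp_1\frakp_2)$, not a relation ``encoded in $\widehat{P}$.'' The same goes for the ``diagonal relation'' you invoke for primes of $F$ over a common prime of $E$. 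You have correctly located the crux, but declaring that these identifications are already built into $\widehat{P}$ does not resolve it: a complete argument along your lines must either show such coincidences of characters cannot occur at the levels allowed (they can) or confront them head-on, and your write-up does neither.
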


We will use the following lemma in the proof.

\begin{lem} \label{lemsurj}
Let $P$ denote either $\PSL_2$ or $\PGL_2$.  Let $H$ be a closed subgroup of $P(\Z_{F,\frakp})$, and suppose that $H$ projects surjectively onto $P(\Z_F/\frakp)$.  If $\opchar R/\frakp \geq 5$, then $H=P(\Z_{F,\frakp})$.
\end{lem}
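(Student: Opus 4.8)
The plan is to deduce the lemma from its finite-level analogues. Writing $P_n = P(\Z_F/\frakp^n)$ and letting $H_n$ be the image of $H$ under the reduction $P(\Z_{F,\frakp}) \to P_n$, it suffices to prove that $H_n = P_n$ for every $n \geq 1$: the principal congruence subgroups $\ker(P(\Z_{F,\frakp}) \to P_n)$ form a neighbourhood basis of the identity, so a closed subgroup surjecting onto every $P_n$ must be all of $P(\Z_{F,\frakp})$. I would prove $H_n = P_n$ by induction on $n$, the base case $n=1$ being exactly the hypothesis. Throughout I fix a uniformizer $\varpi$ of $\Z_{F,\frakp}$ and write $\F=\F_\frakp$, $p = \opchar\F \geq 5$; note that in our applications $\frakp$ is unramified over $p$ (the relevant ideals are prime to $6abc$), which is what is used in the last step.

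For the inductive step, let $V_n = \ker(P_{n+1} \to P_n)$. A direct matrix computation with elements $1 + \varpi^n X$ shows that in both cases $P = \PSL_2$ and $P=\PGL_2$ the group $V_n$ is elementary abelian of exponent $p$ and, as a module for $P(\F)$ acting through conjugation, is isomorphic to the adjoint module $\mathfrak{sl}_2(\F)$; here one uses $p \neq 2$ so that $\mathfrak{pgl}_2 \cong \mathfrak{sl}_2$ and $-1 \not\equiv 1 \pmod{\varpi^n}$. Since $p \geq 3$, the module $\mathfrak{sl}_2(\F) \cong \operatorname{Sym}^2$ of the standard module is irreducible for $\SL_2(\F)$, hence for $\PSL_2(\F)$ and $\PGL_2(\F)$. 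By the inductive hypothesis $H_{n+1}$ surjects onto $H_n = P_n$, hence onto $P(\F)$, so $W := H_{n+1} \cap V_n$ is a $P(\F)$-submodule of $V_n$; by irreducibility $W = V_n$ or $W = 0$. If $W = V_n$ then $\#H_{n+1} = \#P_{n+1}$ and we are done. It remains to rule out $W = 0$, which would make $H_{n+1}$ a complement to $V_n$ in $P_{n+1}$, i.e.\ would split the extension
\[ 1 \to V_n \to P_{n+1} \to P_n \to 1. \]

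The heart of the argument — and the one place the hypothesis $p \geq 5$ is essential — is to show that this extension does not split. I would do this by producing an element $u \in P_n$ of order exactly $p$ (the image of a transvection $\begin{pmatrix} 1 & 1 \\ 0 & 1 \end{pmatrix}$ when $n=1$, and of $\begin{pmatrix} 1 & \varpi^{n-1} \\ 0 & 1 \end{pmatrix}$ when $n \geq 2$) and computing the order of an arbitrary lift of $u$ to $P_{n+1}$. Any two lifts differ by an element of $V_n$, and for $\tilde u = \tilde u_0 v$ with $v \in V_n$ one has $\tilde u^{\,p} = \tilde u_0^{\,p}\cdot N(v)$, where $N = \sum_{i=0}^{p-1}\operatorname{Ad}(\bar u)^i$ acts on $V_n \cong \mathfrak{sl}_2(\F)$. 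Writing $\operatorname{Ad}(\bar u) = 1 + \nu$ with $\nu$ nilpotent of index $\leq 3$ on this three-dimensional module, one gets $N = p\cdot 1 + \binom{p}{2}\nu + \binom{p}{3}\nu^2$, and $p$, $\binom{p}{2}$, $\binom{p}{3}$ all vanish in $\F$ precisely because $p \geq 5$; hence $N = 0$ and $\tilde u^{\,p} = \tilde u_0^{\,p}$ for \emph{every} lift. Since the obvious lift $\tilde u_0$ (the same unipotent matrix read modulo $\frakp^{n+1}$) satisfies $\tilde u_0^{\,p} \neq 1$, no lift of $u$ has order $p$, contradicting the existence of a section carrying $u$ to an order-$p$ element. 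This forces $W = V_n$ and completes the induction. The main obstacle is exactly this non-splitting step; the rest is the standard Frattini-type argument for congruence subgroups, and the irreducibility of $\mathfrak{sl}_2(\F)$ is classical.
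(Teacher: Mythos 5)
Your proof is correct and follows essentially the same route as the paper's own (second) argument: induction on the level, identification of the kernel $V_n$ with the irreducible module $\mathfrak{sl}_2(\F_\frakp)$, and exclusion of a complement by showing that $1 \to V_n \to P_{n+1} \to P_n \to 1$ does not split; the paper leaves the non-splitting step as an exercise (citing Serre for $\Z_F=\Z$, $P=\PSL_2$, and reducing $\PGL_2$ to $\PSL_2$), whereas you carry it out directly via the computation $N=p\cdot 1+\binom{p}{2}\nu+\binom{p}{3}\nu^2=0$ for $p\ge 5$. You are also right to flag that the final step ($\tilde u_0^{\,p}\ne 1$ in $P_{n+1}$) requires $\frakp$ to be unramified over $p$ --- a hypothesis the lemma as stated omits but which holds in every application, since $F$ is contained in a cyclotomic field ramified only at primes dividing $2abc$ and the relevant $\frakN$ are prime to $6abc$.
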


\begin{proof}
Serre \cite[Lemmas 3.4.2--3.4.3]{SerreAbel} proves this when $\Z_F=\Z$ and $P=\PSL_2$, but his Lie-theoretic proof generalizes to an arbitrary number ring $\Z_F$.  One can deduce the statement for $P=\PGL_2$ from this statement as follows.  The preimage of $H$ under the map $\GL_2(\Z_{F,\frakp}) \to \PGL_2(\Z_{F,\frakp})$ intersected with $\SL_2(\Z_{F,\frakp})$ maps surjectively to $\SL_2(\Z_F/\frakp)$ so $H \supseteq \PSL_2(\Z_{F,\frakp})$.  But
\[ \PGL_2(\Z_{F,\frakp})/\PSL_2(\Z_{F,\frakp}) \cong \Z_{F,\frakp}^\times/\Z_{F,\frakp}^{\times 2} \cong (\Z_F/\frakp)^\times/(\Z_F/\frakp)^{\times 2} \]
and so since $H$ maps surjectively to $\PGL_2(\Z_F/\frakp)$ we must have $H=\PGL_2(\Z_{F,\frakp})$.

An alternate proof for both cases runs as follows.  One proves the statement by induction; we have an exact sequence
\begin{equation} \label{M2GL2}
0 \to M_2(k) \to \GL_2(R/\frakp^e) \to \GL_2(R/\frakp^{e-1}) \to 1
\end{equation}
via the isomorphism
\[ 1+M_2(\frakp^{e-1}/\frakp^e) \cong M_2(\frakp^{e-1}/\frakp^e) \cong M_2(k). \]
The group $\GL_2(R/\frakp^{e-1})$ acts by conjugation on $M_2(k)$ and factors through $\GL_2(k)$.  Since $\opchar k$ is odd, $M_2(k)$ decomposes into irreducible subspaces under this action as $M_2(k)=k \oplus M_2(k)_0$ where $M_2(k)_0$ denotes the subspace of matrices of trace zero.  Restricting to $\SL_2$ or $\PGL_2$, one then reduces to an exercise showing that the above sequence does not split (and indeed, it splits for $k=\F_2$ for $e \leq 3$ and for $k=\F_3$ for $e=2$ \cite[Exercise 1, p.~IV-27]{SerreAbel}).
\end{proof}

%\begin{rmk}
%Excluding the primes $\frakP \mid 6$ is something made for convenience; one could conclude that %the image is dense by an extension of Lemma \ref{lemsurj}.
%\end{rmk}

\begin{proof}[Proof of Proposition \ref{densesub}]
We show that $\phi_{\frakN}$ has image $P(\frakN)$.  We have shown that this statement is true if $\frakN$ is prime.  Using Lemma
\ref{lemsurj} and basic topological considerations, we find that the image of $\phi_{\frakN}$ is equal to $P(\frakN)$ when $\frakN=\frakP^e$ is a prime power.
%\plc{I agree, but maybe say a little more.}  \jv{Yes, please say a little more.}

Suppose that $\frakM$ and $\frakN$ are coprime ideals of $\Z_F$.  The kernel of the map
\[ \Deltabar \to \frac{\Deltabar}{\Deltabar(\frakM)} \times \frac{\Deltabar}{\Deltabar(\frakN)} \]
is equal to
\[ \Deltabar(\frakM) \cap \Deltabar(\frakN) = \Deltabar \cap (\calO(\frakM)_1^\times \cap \calO(\frakN)_1^\times) = \Deltabar \cap \calO(\frakM\frakN)_1^\times = \Deltabar(\frakM\frakN). \]
The cokernel of this map is $\displaystyle{\frac{\Deltabar}{\Deltabar(\frakM)\Deltabar(\frakN)}}$.  We claim that this cokernel is trivial.  Since $\calO$ is dense in $\calO_\frakN = \calO \otimes_{\Z_F} \Z_{F,\frakN}$ it follows that $\calO(\frakM)$ is dense in $\calO_\frakN$, so $\calO(\frakM)_1^\times$ maps surjectively modulo $\frakN$ onto $\calO_1^\times/\calO(\frakN)_1^\times \cong \SL_2(\Z_F/\frakN)$.  Thus $\calO(\frakM)_1^\times \calO(\frakN)_1^\times = \calO_1^\times$, and so $\Deltabar(\frakM)\Deltabar(\frakN)=\Deltabar$.   Composing with the map $(\phi_\frakM,\phi_\frakN)$, we obtain a map
\[ \Deltabar \to \PSL_2(\Z_F/\frakM) \times \PSL_2(\Z_F/\frakN); \]
by induction on the number of prime factors, we may suppose that the image of the this map is equal to $P(\frakM) \times P(\frakN) \cong P(\frakM\frakN)$, and the result follows.
\end{proof}

Having now defined the curve $X(a,b,c;\frakN)=X(\frakN) = \Deltabar(\frakN) \backslash \calH$ and identified its Galois group as a cover of $X(1) = \Deltabar \backslash \calH$, to conclude this section we also define certain intermediate quotients in analogy with the classical modular curves.  Recall by \eqref{eqn:phifrakN} we have a homomorphism $\phi_{\frakN}:\Deltabar \to \PSL_2(\Z_F/\frakN)=P$; we let $H_0 \leq P$ be the image in $P$ of subgroup of upper-triangular matrices in $\SL_2(\Z_F/\frakN)$ and similarly $H_1 \leq P$ the image of the subgroup of upper-trianglar matrices with both diagonal entries equal to $1$.  Let
\[ \Delta(\frakN) \leq \Delta_1(\frakN)=\phi_{\frakN}^{-1}(H_1) \leq \Delta_0(\frakN)=\phi_{\frakN}^{-1}(H_0) \leq \Deltabar, \]
be the preimages of $H_0$ and $H_1$ under $\phi_{\frakN}$; then we define
\[ X_0(\frakN)=\Delta_0(\frakN) \backslash \calH, \qquad X_1(\frakN)=\Delta_1(\frakN) \backslash \calH \]
and we obtain maps
\begin{equation} \label{eqn:towerXN}
X(\frakN) \to X_1(\frakN) \to X_0(\frakN) \to X(1).
\end{equation}
Although the definition of these curves depends on a choice of isomorphism $\phi_{\frakN}$, any two such choices are conjugate in $G=\Aut(X_0(\frakN)/X(1))$, so the resulting tower \eqref{eqn:towerXN} does not depend on any choices up to isomorphism.
The curves $X_0(\frakN),X_1(\frakN)$ are defined over any field of definition of $(X(\frakN),G)$.

\section{Examples}

In this section, we give many examples of Theorems A and B and show how these theorems recover some familiar families of curves.
\\ \\
Our notation $X(a,b,c;\mathfrak{p})$ becomes awkward in the consideration of specific cases: given $a,b,c \in \Z_{\geq 2}$
and a prime number $p \nmid 2abc$, we need to choose a prime ideal $\mathfrak{p}$ of $E(a,b,c)$ lying over $p$.  By Theorem B, the curve $X(a,b,c;\mathfrak{p})$ only depends upon the choice of $\mathfrak{p}$ lying over $p$ up to Galois
conjugacy, and there is in general no canonical choice of $\mathfrak{p}$.  Therefore in what follows we will write
$X(a,b,c;p)$ for $X(a,b,c;\mathfrak{p})$ for some unspecified $\mathfrak{p} \mid p$.

\begin{exm} \label{modularfielddef}
 We take $(a,b,c)=(2,3,\infty)$ as $\SL_2(\Z) \cong \Delta(2,3,\infty)$.  For $N \in \Z_{\geq 1}$, our construction from triangle groups gives exactly the congruence subgroup $\Delta(2,3,\infty;N)=\Gamma(N)$ of matrices congruent to the identity modulo $N$, and we find the modular curve $X(2,3,\infty;N)=X(N)=\Gamma(N) \backslash \calH^*$ of level $N$, where $\calH^*=\calH \cup \PP^1(\Q)$ is the completed upper half-plane.
Suppose $N=p$ is prime.  The cover $X(p) \to X(1)$ is a $G=\PSL_2(\F_p)$-cover with ramification indices $(2,3,p)$; this verifies the statement of Theorem B, since $F=F(a,b,c)=\Q(\lambda_4,\lambda_6,\lambda_\infty)=\Q=E$.  The statement of Theorem A says that $M(X(p))=\Q$ since $a=2$.  As discussed in Example \ref{XpoverQ}, in fact we have $M(X(p),G)=\Q(\sqrt{p^*})$.
\end{exm}

\begin{exm} \label{23pp}
We take $(a,b,c)=(2,3,p)$ with $p \geq 7$ prime.  We want to consider the case where $q$ is a power of $p$; for this, we will need the slight extension of Theorem B given in Remark \ref{TheoremBext}.  We have $F=F(\lambda_4,\lambda_6,\lambda_{2p})=F(\lambda_{2p})=E$;  the discriminant
\[ \beta=\lambda_{4}^2+\lambda_{6}^2+\lambda_{2p}^2 + \lambda_{4}\lambda_{6}\lambda_{2p} - 4 = \lambda_{2p}^2-3 = \lambda_p-1 \]
is a unit in $\Z_{E}$.  Therefore we can consider $X(2,3,p;\frakp)$ where $\frakp^{(p-1)/2}=(p)$, which gives a $\PSL_2(\F_p)$-cover $X(2,3,p;\frakp) \to X(2,3,p) \cong \PP^1$.

We verify Theorem A: we have $\Q(\lambda_{4},\lambda_{6},\lambda_{2p})_{p'} = \Q$ so $r=1$.  Since $a=2$ and $p \mid abc$, we have $d_X=d_{(X,G)}=1$; consequently, $M(X)=\Q$ and $M(X,G)=\Q(\sqrt{p^*})$.

The proof in this situation comes down to the following.  There are two unipotent conjugacy classes which are in the same Galois orbit (taking an odd power moves from quadratic residues to nonresidues) and so the field of rationality of such a conjugacy class is the quadratic subfield $\Q(\sqrt{p^*}) \subseteq \Q(\zeta_p)$, where $p^*=(-1)^{(p-1)/2} p$.  Since the other two conjugacy classes representing elements of orders $2$ and $3$ are $\Q$-rational, the field of rationality of $\Cunder$ is $F(\Cunder)=\Q(\sqrt{p^*})$.  As above, the outer automorphism $\tau$ interchanges the two unipotent conjugacy classes, so $F\sbwk(\Cunder)=\Q$.

In fact, the classical modular cover $j:X(p) \to X(1)$ is also a $\PSL_2(\F_p)$-Galois \Belyi\ map, with ramification points  $0,1728,\infty$.  It follows that $X(2,3,p;p) \cong X(p)$ over $\Qbar$.
%In particular, by maximality of triangle groups \eqref{ifmaxdelta}, we conclude that $\PSL_2(\F_p)$ is the full automorphism group of $X(p)$ %\cite{Mazur} and that the minimal field of definition of $(X(p),\Aut(X(p))$ is $\Q(\sqrt{p^*})$.  This interpretation is quite different %than the moduli interpretation of ``na\"ive'' level $p$-structure \cite{KatzMazur} for $X(p)$ which gives a model over $\Q(\zeta_p)$.  %Indeed, this model is used by Shih \cite{Shih} to show that $\PSL_2(\F_p)$ occurs regularly as a Galois group over $\Q(\sqrt{p^*})$.

We could equally well consider the covers $X=X(2,3,p;2) \to X(2,3,p)$, which for the same reasons can be defined over $\Q$ and gives rise to $\SL_2(\F_{2^r})=\PSL_2(\F_{2^r})=\PSL_2(\F_{2^r})$ covers where $r$ is the order of $2$ in $(\Z/p\Z)^\times/\{\pm 1\}$; from Theorem A, we have $M(X,G)=\Q(\lambda_p)$.
\end{exm}

\begin{exm} \label{Shimurafielddef}
Finitely many families of curves $X(a,b,c;p)$ correspond to Shimura curves, where the group $\Delta(a,b,c)$ is arithmetic.  The arithmetic triples were classified by Takeuchi in a sequence of papers \cite{Takeuchi00,Takeuchi0,Takeuchi,Takeuchi2}: there are $85$ triples $(a,b,c)$ and $26$ maximal triples which fall into $19$ commensurability classes.  As Takeuchi explains (in the notation of Section \ref{quatconst}), the triple $(a,b,c)$ gives rise to arithmetic groups if and only if the quaternion algebra $A$ over $E$ is split at exactly one real place of $E$, and the triangle group $\Delta(a,b,c)$ is commensurable with the unit group of a maximal order in $A$.

This covers as a special case the previous example of the modular curves, which include the maximal triples $(2,3,\infty),(2,4,\infty),(2,6,\infty)$ and the nonmaximal triples
\[ (3,3,\infty),(3,\infty,\infty),(4,4,\infty),(6,6,\infty),(\infty,\infty,\infty). \]
So in this example, we restrict to the case $a,b,c \in \Z$.

The minimal field of definition of these curves was studied by Elkies \cite[\S 5.3]{ElkiesSCC} and later by the second author \cite[Proposition 5.1.2]{VoightThesis}.  Each base field $E$ is Galois over $\Q$, and it turns out that at least one of the triangle groups in each commensurability class has distinct indices $a,b,c$: therefore, by identifying the corresponding elliptic points with $0,1,\infty$, any Galois-invariant construction (such as taking Galois invariant level) yields a curve which is fixed by $\Gal(E/\Q)$, providing extra descent in some cases.

This argument can be made directly using the language of canonical models of Shimura curves, which has the advantage that it applies in other circumstances as well (see e.g.\ Hallouin \cite[Proposition 1]{Hallouin}).  Let $B$ be a quaternion algebra over a totally real field $F$ which is split at a unique real place and let $\calO$ be a maximal order in $B$.  Associated to this data is a Shimura curve $X(\C)=B_+^\times \backslash \calH \times \Bhat^\times / \calOhat^\times$ which has a model $X$ over the reflex field $F^\flat=F$.  Suppose $F$ has strict class number $1$, so $X(\C)$ is irreducible (otherwise consider a component over the strict class field of $F$).  Suppose further that $F$ is Galois over $\Q$.  Then for any $\sigma \in \Gal(F/\Q)$, the conjugate curve $X^\sigma$ is given by
\[ X^\sigma(\C) = (B^\sigma)_+^\times \backslash \calH \times (\Bhat^\sigma)^\times / \calOhat^\sigma)^\times. \]
Now if $B^\sigma \cong_{\Q} B$, which means precisely that the discriminant of $B$ is invariant under $\sigma$, and there exists an analytic isomorphism $X(\C) = \Gamma(1) \backslash \calH \xrightarrow{\sim} \Gamma(1)^\sigma \backslash \calH = X^\sigma(\C)$, then this yields exactly the descent data needed to descend $X$ to $\Q$.  In the case of triangle groups, the quotients $X^\sigma(\C)$ have fundamental domain given by the union of two hyperbolic triangles with angles $\pi/a,\pi/b,\pi/c$, and any two hyperbolic triangles with the same angles are congruent; hence the curves descend.  This argument works with the maximal order replaced by any order $\calO$ which is defined by Galois invariant means, e.g.\ the order $\calO(N)$ for $N \geq 1$.

Many of these triples will occur in specific examples below.
\end{exm}

\begin{exm} \label{exmLLT}
Consider the case of Hecke triangle groups treated by Lang, Lim, and Tan \cite{LangLimTan}, the groups with $\Deltabar(a,b,c)=\Deltabar(2,b,\infty)$.  We make the additional assumption that $b$ is odd. Then we have
\[ F(4,2b,\infty)=\Q(\lambda_4,\lambda_{2b},\lambda_{\infty})=\Q(\lambda_{2b})=\Q(\lambda_q)=E(2,b,\infty) \]
since $b$ is odd.  Then for all primes $\frakp$ of $\Q(\lambda_q)$ we have $\F_\frakP=\F_\frakp$ in the notation of Section \ref{quatconst}, so $\Deltabar/\Deltabar(\frakp) \cong \PSL_2(\F_\frakp)$.  Note that when $q \neq p$ we have that $[\F_\frakp:\F_p]$ is indeed equal to the smallest positive integer $r$ such that $p^r \equiv \pm 1 \pmod{q}$, or equivalently the order of $\Frob_p$ in $\Gal(\Q(\lambda_q)/\Q)$.

Lang, Lim, and Tan \cite[Main Theorem, part (iii)]{LangLimTan} obtain a group of $\PGL_2$-type in the case that $r$ is even and $[\F_p(t):\F_p(t^2)]=2$, where $t \equiv \lambda_q \pmod{\frakp}$.  But this latter equality cannot hold: the map $\zeta_q \mapsto \zeta_q^2$ is a Galois automorphism of $\Q(\zeta_q)$ and restricts to the automorphism $\lambda_q \mapsto \lambda_q^2-2$.  It follows in fact that $\F_p(t)=\F_p(t^2)$.
\end{exm}

To give further examples, we list all $G$-Galois \Belyi\ curves with $a,b,c \neq \infty$ with genus $g \leq 24$ with $G=\PSL_2(\F_q)$ or $G=\PGL_2(\F_q)$. The formula for the genus (Remark \ref{onlyfinGaloisBelyi}) gives a bound for $a,b,c$ and $\#G$ in terms of $g$ (in fact, for arbitrary groups $G$).  From the bound $\#\PSL_2(\F_q) \leq 84(g-1) \leq 1932$ we obtain $q \leq 16$; and for each group $G$ we find only finitely many triples of possible orders $(a,b,c)$.  We then enumerate the triples in \textsc{Magma} \cite{Magma}; the curves of genus $g \leq 24$ are listed in Table \ref{tablelowgenus}.
% For clarity, we use the group theorist's notation %$\textup{PSL}_2(q)=\textup{PSL}_2(\F_q)$.
We list also if the triple $(a,b,c)$ is arithmetic (after Takeuchi) and the genus $g_0$ of the subcover $X_0(a,b,c;\frakp)$ whose Galois closure is $X(a,b,c;\frakp)$, as defined at the end of Section \ref{sec:proofs}; we also give the corresponding fields $F(a,b,c) \supseteq E(a,b,c)$, defined in (*).

\begin{table}
\begin{equation} \label{tablelowgenus} \notag
\begin{array}{c|c|c||c|c||c|c|c}
g & (a,b,c) & G & \text{arithmetic?} & g_0 & F(a,b,c) & E(a,b,c) & D_{p'}(a,b,c)^{\langle \Frob_p \rangle} \\
\hline \hline
 3\rule{0pt}{2.5ex} & (2,3,7) & \PSL_2(\F_7) & \textsf{T} & 0 & \Q(\lambda_7) & \Q(\lambda_7) & \Q \\
 3 & (3,4,4) & \PGL_2(\F_3) & \textsf{T} & 1 & \Q(\sqrt{2}) & \Q & \Q \\
 4 & ( 2, 4, 5 ) & \PGL_2(\F_5) & \textsf{T} & 0 & \Q(\sqrt{2},\sqrt{5}) & \Q(\sqrt{5}) & \Q \\
 4 & ( 2, 5, 5 ) & \PGL_2(\F_4) & \textsf{T} & 1 & \Q(\sqrt{5}) & \Q(\sqrt{5}) & \Q \\
 4 & ( 2, 5, 5 ) & \PSL_2(\F_5) & \textsf{T} & 0 & \Q(\sqrt{5}) & \Q(\sqrt{5}) & \Q \\
 5 & ( 3, 3, 5 ) & \PGL_2(\F_4) & \textsf{T} & 0 & \Q(\sqrt{5}) & \Q(\sqrt{5}) & \Q \\
 5 & ( 3, 3, 5 ) & \PSL_2(\F_5) & \textsf{T} & 1 & \Q(\sqrt{5}) & \Q(\sqrt{5}) & \Q \\
 6 & ( 2, 4, 6 ) & \PGL_2(\F_5) & \textsf{T} & 0 & \Q(\sqrt{2},\sqrt{3}) & \Q & \Q \\
 7 & ( 2, 3, 7 ) & \PGL_2(\F_8) & \textsf{T} & 0 & \Q(\lambda_7) & \Q(\lambda_7) & \Q \\
 8 & ( 2, 3, 8 ) & \PGL_2(\F_7) & \textsf{T} & 0 & \Q(\lambda_{16}) & \Q(\sqrt{2}) & \Q \\
 8 & ( 3, 3, 4 ) & \PSL_2(\F_7) & \textsf{T} & 0 & \Q(\sqrt{2}) & \Q(\sqrt{2}) & \Q(\sqrt{2}) \\
 9 & ( 2, 5, 6 ) & \PGL_2(\F_5) & \textsf{T} & 1 & \Q(\sqrt{3},\sqrt{5}) & \Q(\sqrt{5}) & \Q \\
 9 & ( 3, 5, 5 ) & \PGL_2(\F_4) & \textsf{T} & 1 & \Q(\sqrt{5}) & \Q(\sqrt{5}) & \Q \\
 9 & ( 3, 5, 5 ) & \PSL_2(\F_5) & \textsf{T} & 1 & \Q(\sqrt{5}) & \Q(\sqrt{5}) & \Q \\
 10 & ( 2, 4, 5 ) & \PSL_2(\F_9) & \textsf{T} & 0 & \Q(\sqrt{2},\sqrt{5}) & \Q(\sqrt{5}) & \Q \\
 10 & ( 2, 4, 7 ) & \PSL_2(\F_7) & \textsf{T} & 1 & \Q(\lambda_7,\sqrt{2}) & \Q(\lambda_7) & \Q \\
 11 & ( 2, 6, 6 ) & \PGL_2(\F_5) & \textsf{T} & 1 & \Q(\sqrt{3}) & \Q & \Q \\
 11 & ( 3, 4, 4 ) & \PGL_2(\F_5) & \textsf{T} & 0 & \Q(\sqrt{2}) & \Q & \Q \\
 13 & ( 5, 5, 5 ) & \PGL_2(\F_4) & \textsf{T} & 2 & \Q(\sqrt{5}) & \Q(\sqrt{5}) & \Q \\
 13 & ( 5, 5, 5 ) & \PSL_2(\F_5) & \textsf{T} & 1 & \Q(\sqrt{5}) & \Q(\sqrt{5}) & \Q \\
 14 & ( 2, 3, 7 ) & \PSL_2(\F_{13}) & \textsf{T} & 0 & \Q(\lambda_7) & \Q(\lambda_7) & \Q(\lambda_7) \\
 15 & ( 2, 3, 9 ) & \PGL_2(\F_8) & \textsf{T} & 1 & \Q(\lambda_9) & \Q(\lambda_9) & \Q \\
 15 & ( 2, 4, 6 ) & \PGL_2(\F_7) & \textsf{T} & 0 & \Q(\sqrt{2},\sqrt{3}) & \Q & \Q \\
 15 & ( 3, 4, 4 ) & \PSL_2(\F_7) & \textsf{T} & 1 & \Q(\sqrt{2}) & \Q & \Q \\
 16 & ( 2, 3, 8 ) & \PGL_2(\F_9) & \textsf{T} & 0 & \Q(\lambda_{16}) & \Q(\sqrt{2}) & \Q \\
 16 & ( 3, 3, 4 ) & \PSL_2(\F_9) & \textsf{T} & 0 & \Q(\sqrt{2}) & \Q(\sqrt{2}) & \Q \\
 16 & ( 3, 4, 6 ) & \PGL_2(\F_5) & \textsf{T} & 1 & \Q(\sqrt{2},\sqrt{3}) & \Q(\sqrt{6}) & \Q \\
 17 & ( 3, 3, 7 ) & \PSL_2(\F_7) & \textsf{T} & 0 & \Q(\lambda_7) & \Q(\lambda_7) & \Q \\
 19 & ( 2, 7, 7 ) & \PSL_2(\F_7) & \textsf{T} & 0 & \Q(\lambda_7) & \Q(\lambda_7) & \Q \\
 19 & ( 2, 5, 5 ) & \PSL_2(\F_9) & \textsf{T} & 1 & \Q(\sqrt{5}) & \Q(\sqrt{5}) & \Q \\
 19 & ( 4, 4, 5 ) & \PGL_2(\F_5) & \textsf{T} & 0 & \Q(\sqrt{2},\sqrt{5}) & \Q(\sqrt{5}) & \Q \\
 21 & ( 3, 6, 6 ) & \PGL_2(\F_5) & \textsf{T} & 2 & \Q(\sqrt{3}) & \Q & \Q \\
 22 & ( 2, 4, 8 ) & \PGL_2(\F_7) & \textsf{T} & 1 & \Q(\lambda_{16}) & \Q(\sqrt{2}) & \Q(\sqrt{2}) \\
 22 & ( 4, 4, 4 ) & \PSL_2(\F_7) & \textsf{T} & 2 & \Q(\sqrt{2}) & \Q(\sqrt{2}) & \Q \\
 24 & ( 3, 4, 7 ) & \PSL_2(\F_7) & \textsf{F} & 1 & \Q(\lambda_7,\sqrt{2}) & \Q(\lambda_7,\sqrt{2}) & \Q \\
 24 & ( 4, 5, 6 ) & \PGL_2(\F_5) & \textsf{F} & 1 & \Q(\sqrt{2},\sqrt{3},\sqrt{5}) & \Q(\sqrt{30}) & \Q
\end{array}
\end{equation}
\defi{Table \ref{tablelowgenus}}: $\PSL_2(\F_q)$-Galois \Belyi\ curves of genus $g \leq 24$
\end{table}
\addtocounter{equation}{1}

We now discuss many of these curves in turn, highlighting those curves with interesting features.  We note first that all curves but the last two (of genus $24$) are arithmetic.  Our computations are performed in \textsc{Magma} \cite{Magma}.  We are grateful to Elkies for allowing us to record several computations and other remarks in these examples.

\subsubsection*{Genus 3, $(2,3,7)$, $\PSL_2(\F_7)$}

This curve is the beloved Klein quartic, the projective plane curve given by the equation $x^3y + y^3z + z^3x=0$.  For a detailed discussion of this curve and its arithmetic, see Elkies \cite{Elkies}.  The map $f:X_0(2,3,7;7) \cong \PP^1 \to X(2,3,7)=\PP^1$ is given by the rational function
\[ \frac{(t^4+14t^3+63t^2+70t-7)^2}{1728t}=\frac{(t^2+5t+1)^3(t^2+13t+49)}{1728t}+1. \]
This verifies in Theorem A that the curve is defined over $\Q$ (since $a=2$) and its automorphism group is defined over $\Q(\sqrt{-7})$.

\subsubsection*{Genus 3, $(3,4,4)$, $\PGL_2(\F_3)$}

The triple $(3,4,4)$ is exceptional, and so has been excluded from our analysis.  But since $\PGL_2(\F_3) \cong S_4$ is the full group, it is worth identifying this cover.  The quaternion algebra $A$ associated to the triple $(3,4,4)$ is defined over $E=\Q(\lambda_3,\lambda_4,\lambda_6\lambda_8^2)=\Q$ and has discriminant $6$.  The group $\Delta(3,4,4)$ is not maximal: it is contained in $\Delta(2,4,6)$ with index $2$.  The curve $X(2,4,6)$ is associated to the arithmetic group $N(\Lambda)/\Q^\times$ for $\Lambda \subseteq A$ a maximal order, and the quotient $X(3,4,4) \to X(2,4,6)$ is obtained as the quotient by an Atkin-Lehner involution; the Shimura curve associated to a maximal order has signature $(0;2,2,3,3)$.  See work of Baba and Granath \cite{BabaGranath} as well as Elkies \cite[\S 3.1]{ElkiesSCC} for a detailed discussion of these triangle groups and their relationships.

Theorem B does not apply nor does Remark \ref{TheoremBext} since $3$ is ramified in this quaternion algebra.  Consequently, the algebra $A \otimes_\Q \Q_3$ is a division algebra and $\Lambda \otimes_\Z \Z_3$ is the unique maximal order.  There is a unique two-sided prime ideal $P \subset \Lambda$ with $\nrd(P)=3$.  The quotient $\Lambda/P$ is isomorphic to $\F_9=\F_3(i)$, and $\Lambda/P^2=\Lambda/3\Lambda$ is isomorphic to the algebra over $\F_3$ generated by $i,j$ subject to $i^2=-1$, $j^2=0$, and $ji=-ij$.  We have an exact sequence
\[ 1 \to (1+P)/(1+3\Lambda) \to (\Lambda/3\Lambda)^\times/\{\pm 1\} \to (\Lambda/P)^\times/\{\pm 1\} \to 1 \]
which as finite groups is
\[ 1 \to \Z/3\Z \oplus \Z/3\Z \to (\Lambda/3\Lambda)^\times/\{\pm 1\} \to \Z/4\Z \to 1. \]
There is a dicyclic group of order 12, a subgroup of $(\Lambda/3\Lambda)^\times/\{\pm 1\}$ that maps
surjectively onto $\Z/4\Z$ with kernel $\Z/3\Z$; this gives a cover $X(3,4,4;3) \to X(\Lambda)$ of degree $12$.  There is an Atkin-Lehner involution (normalizing $\Lambda$) whose quotient then yields the cover $f:X(3,4,4;3) \to X(3,4,4)$ with Galois group $\PGL_2(\F_3)$.

The conjugacy classes of elements of orders $3$ and $4$ in $S_4$ are unique, and one can check by hand that the corresponding triple is rigid, so the curve $X(3,4,4;3)$ (with its automorphism group) is defined over $\Q$.

Wolfart \cite[\S 6.3]{WolfartCMJac} identifies $X(3,4,4;3)$ as the hyperelliptic curve
\[ y^2 = x^8 - 14x^4 + 1=(x^4 - 4x^2 + 1)(x^4 + 4x^2 + 1) \]
with automorphism group $S_4 \times C_2$.  The roots of the polynomial in $x$ are the vertices of a cube in the Riemann sphere.  The genus $1$ curve $X_0(3,4,4;3)$ is a degree $4$ cover of $X(3,4,4)$ and corresponds to the fixed field under a subgroup $S_3 \subseteq S_4$: it is the elliptic curve with minimal model
\[ y^2=x^3+x^2+16x+180 \]
of conductor $48$ and the map $X_0(3,4,4;3) \to X(3,4,4)$ is the map
\[ \phi(x,y) = 56 + x^2 - 4y \]
with divisor $4(4,18) - 4\infty$ and the divisor of $\phi-108$ is $3(-2,-12) + (22,108) - 4\infty$.  Via $\phi(x,y)=t$, this cover gives rise to the family of $S_4$-extensions
\[ (x^2+56-t)^2 - 16(x^3+x^2+16x+180) = x^4 - 16x^3 + (96-2t)x^2 - 256x + (t^2 - 112t + 256). \]

\begin{comment}
One can compute this cover directly: the cycle type of the permutations in the permutation representation on $\PGL_2(\F_3)$ is $3^1\ 1$, $4$, $4$; choose the images in $\PP^1$ to be the points $0,1,\infty$, respectively, so $\opdiv \phi = 3P+Q-4\infty$ and $\opdiv(\phi-1)=4R-4\infty$.  It follows that $R$ is a 4-torsion point, and since $\opdiv (d\phi/\omega)=2P+3R-5\infty$, where $\omega$ is a holomorphic differential, we see that $P$ is an $8$-torsion point, which we can take to be at $(0,0)$.  The universal elliptic curve with an $8$-torsion point is
\[ y^2 - (2t^2-4t+1)xy-t^3(2t-1)(t-1)y = x^3 - t^2(2t-1)(t-1)x^2 \]
where $8(0,0)=\infty$; we have $2(0,0)=R=(2t^4 - 3t^3 + t^2,0)$, and $\phi$ is the unique function (up to scalar) such that $\opdiv(\phi-1)=4P_3-4\infty$:
\[ (4t^2 - 4t + 1)y - x^2 + (-8t^4 + 22t^3 - 21t^2 + 8t - 1)x + (4t^8 -16t^7 + 25t^6 - 19t^5 + 7t^4 - t^3). \]
The value for $t$ follows from the requirement that $\phi$ has a triple zero at $(0,0)$: we find $t=3/4,1/4$, and after simplification we recover the above curve.
\end{comment}

This shows that for exceptional (or commutative) triples there may be normal subgroups of $\Deltabar(a,b,c)$ with quotient isomorphic to $\PSL_2(\F_q)$ or $\PGL_2(\F_q)$ which are not obtained by the construction of Theorem A.  In general, covers obtained by considering suborders of index supported at primes dividing the discriminant of the quaternion algebra will give only solvable extensions.

\subsubsection*{Genus 4, $(2,4,5)$, $\PGL_2(\F_5)$; $(2,5,5)$, $\PGL_2(\F_4) = \PSL_2(\F_5)$}

The triangle group $\Deltabar(2,4,5)$ is maximal, associated to a
quaternion algebra defined over $\Q(\sqrt{5})$ ramified at the prime $(2)$
(obtained as the full Atkin-Lehner quotient), and this group contains $\Deltabar(2,5,5)$ as a subgroup of index 2.  We have an exceptional isomorphism $\PGL_2(\F_4) = \PSL_2(\F_4) \cong \PSL_2(\F_5)$, so this curve arises from the congruence subgroup with $\frakp=(\sqrt{5})$.  The triple $(2,5,5)$ is exceptional, but the spherical triangle group that it generates is the full group ($\PSL_2(\F_5) \cong A_5$; note $\PGL_2(\F_5) \cong S_5$).

The curve is the Bring curve (see Wolfart \cite[\S 6.4]{WolfartCMJac} and also Edge \cite{Edge}), defined by the equations
\[  x_0 + x_1 + ... + x_4 = x_0^2 + x_1^2 + ... + x_4^2 = x_0^3 + x_1^3 + ... + x_4^3 = 0 \]
in $\PP^4$.

The significance in Theorem B about the splitting behavior of primes is illustrated here.  We have $F_5(2,4,5)=\Q(\sqrt{2})$ and $F_5(2,5,5)=\Q$, whereas the trace field of the square subgroup is $E_5(2,4,5)=\Q=E_5(2,5,5)=E$; the prime $5$ is inert in $\Q(\sqrt{2})$, so in the former case we obtain a $\PGL_2(\F_5)$-extension and in the latter we obtain a $\PSL_2(\F_5)$-extension.

\subsubsection*{Genus 5, $(3,3,5)$, $\PGL_2(\F_4) \cong \PSL_2(\F_5)$}

The triple $(3,3,5)$ is exceptional but again generates the full group.  However, the quaternion algebra $A$ is defined over $\Q(\sqrt{5})$ and is ramified at $(\sqrt{5})$, and $\Deltabar(3,3,5)$ corresponds to the group of units of reduced norm 1 in a maximal order.  (The Atkin-Lehner quotient is uniformized by the triangle group $\Deltabar(2,3,10)$---the composite gives a $G$-Galois \Belyi\ map, but $G \not\cong \PGL_2(\F_5)$, since there is no element of order $10$ in this group!)

The conjugacy class of order $3$ is unique and there are two of order $5$; we check directly that this cover is rigid, even though it is exceptional.  Therefore the curve $X=X(3,3,5;5)$ is defined over $\Q$ and its automorphism group is defined over $\Q(\sqrt{5})$.

The cover $X_0(3,3,5;4)$ has genus $0$, and is given by
\[ t^3(6t^2-15t+10) - 1 = (t-1)^3(6t^2+3t+1). \]

% Elkies pointed out to us that this curve should be given by a hyperelliptic cover whose hyperelliptic branch points are the $12$ vertices of an icosahedron inscribed in the Riemann sphere, in analogy to the $(3,4,4;3)$ case, such as
% \[ C : y^2 = x^{11} - 11x^6 - x. \]
% However, the automorphism group of C is minimally defined over $\Q(\zeta_5)$; therefore, if this is correct, the curve $C$ is a twist of $X(3,3,5;5)$ over $\Q(\zeta_5)$.

\subsubsection*{Genus 6, $(2,4,6)$, $\PGL_2(\F_5)$}

As in the $(3,4,4;3)$ case above, the curve $X(2,4,6)$ is the full $(\Z/2\Z \times \Z/2\Z)$-Atkin-Lehner quotient of the curve corresponding to the discriminant 6 quaternion algebra over $\Q$.  The curve $X = X(2,4,6;5)$ is rather exotic, in that it does not arise from a congruence subgroup in the usual sense: the usual (Shimura curve) congruence subgroup of level $5$ gives a $\PSL_2(\F_5)$-cover of genus $11$ mapping to a conic $X(1)$ defined by $x^2+3y^2+z^2=0$ and its quotients by Atkin-Lehner involutions give $\PGL_2(\F_5)$-covers, as below.  In particular, the curve $X(1)$ does not occur intermediate to $X(2,4,6;5) \to X(2,4,6)$.

Here in Theorem B we have $F(2,4,6)=\Q(\sqrt{2},\sqrt{3})$ and $E(2,4,6)=\Q$; the prime $5$ has inertial degree $2$ in this extension, hence we get $\PGL_2(\F_5)$.  In Theorem A we have $a=2$ and $G \cong \PGL_2(\F_5)$ so $d_X=d_{(X,G)}=1$, hence $M(X)=M(X,G)=\Q$.

The map $X_0(2,4,6;5) \to X(2,4,6)$ is computed by Elkies \cite{ElkiesSCC}:
\[ (540t^6 + 324t^5 + 135t^4 + 1) - 1 = 27t^4(20t^2+12t+5). \]
And as Elkies observes, by the invariant theory of $\PGL_2(\F_5)$, there are invariants of degree $12$, $20$, and $30$, and a relation of degree $60$, so the cover can be written invariantly as
\[ y^2 = F_{12}^5/F_{30}^2 \]
and this gives the quotient.

\subsubsection*{Genus 7, $(2,3,7)$, $\PGL_2(\F_8) (=\SL_2(\F_8))$}

The curve $X=X(2,3,7;2)$ is the Fricke-Macbeath curve \cite{Macbeath2} of genus $7$, the second smallest genus for a \defi{Hurwitz curve}: i.e., a curve
uniformized by a subgroup of the Hurwitz group $\Delta(2,3,7)$ (and therefore having maximal automorphism group for its genus).  The curve $X$ has field of moduli equal to $\Q$ and the minimal field of definition of $(X,G)$ is $\Q(\lambda_7)$.  Berry and Tretkoff \cite{BT} show that the Jacobian $J$ of $X$ is isogenous to $E^7$, where $E$ is a non-CM elliptic curve with rational $j$-invariant.  (See also Wolfart \cite[\S 6.5]{WolfartCMJac} and Wohlfahrt \cite{Wohlfahrt}.)

\subsubsection*{Genus 8, $(2,3,8)$, $\PGL_2(\F_7)$ and $(3,3,4)$, $\PSL_2(\F_7)$}

The curve $X_0(2,3,8;7)$ has genus 0, and the triangle group $\Deltabar(2,3,8)$ arises from the quaternion algebra over $\Q(\sqrt{2})$ ramified at the prime above $2$: the map is
\begin{align*}
\phi(t) &= t^8 + \frac17(-4\sqrt{2} - 16)t^7 + (-4\sqrt{2} + 6)t^6 + 6\sqrt{2}t^5 + \frac12(-36\sqrt{2} + 39)t^4 \\
&\qquad +    (3\sqrt{2} - 12)t^3 + \frac12(-46\sqrt{2} + 79)t^2 + \frac12(9\sqrt{2} - 8)t + \frac{1}{16}(-248\sqrt{2} +
    313)
\end{align*}
which factors as
\[ \phi(t)=\left(t^2 + \frac12(-2\sqrt{2} + 1)\right)^3\left(t^2 + \frac17(-4\sqrt{2} - 16)t + \frac12(-2\sqrt{2} + 9)\right) \]
and
\begin{align*}
\phi(t)-\frac{1}{27}(4\sqrt{2}+5) &= \left(t^2 + \frac17(3\sqrt{2} + 12)t + \frac{1}{14}(-8\sqrt{2} + 31)\right) \\
&\qquad \cdot \left(t^3 + \frac12(-\sqrt{2} - 4)t^2 + \frac12(-2\sqrt{2} + 7)t + \frac14\sqrt{2}\right)^2.
\end{align*}

We have $\Delta(3,3,4) \subseteq \Delta(2,3,8)$ with index $2$ and given by the quotient of the Atkin-Lehner involution, so this gives us also a $\PSL_2(\F_7)$-subcover.

Since $a=2$ we have $d_X=1$ and so $M(X)=\Q(\lambda_8)^{\la \Frob_7 \ra}=\Q(\sqrt{2})$, in agreement.  We only know $d_{(X,G)} \leq 2$, so this is the first example of a curve where the automorphism group may be defined over a quadratic extension of $\Q(\sqrt{2})$.

\subsubsection*{Genus 14, $(2,3,7)$, $\PSL_2(\F_{13})$}

The curve $X(2,3,7;13)$ is a Hurwitz curve.  Some progress has been made in writing down equations for this curve: see work by Moreno-Mej\'ia \cite{Moreno} (and work in progress by Streit; methods of Streit \cite{HBCStreit} apply in general).  The curve can be defined over $\Q(\lambda_7)$ and its automorphism group can be defined over an at most quadratic extension of $\Q(\lambda_7)$ ramified only at $13$.

\subsubsection*{Genus 15, $(2,3,9)$, $\PGL_2(\F_8) (=\SL_2(\F_8))$}

Elkies \cite[\S 2]{Elkies237} computed an equation for the genus $1$ curve $X_0(2,3,9;2)$, which happens to be an elliptic curve: it is the curve \textsf{162b3}:
\[ y^2+xy+y=x^3-x^2-95x-697. \]

% \subsubsection*{Genus 15, $(2,4,6)$, $\PGL_2(\F_7)$; $(3,4,4)$, $\PSL_2(\F_7)$}

% No comment.

% \subsubsection*{Genus 16, $(2,3,8)$, $\PGL_2(\F_9)$; $(3,3,4)$, $\PSL_2(\F_9)$}

% No comment.

\subsubsection*{Genus 16, $(3,4,6)$, $\PGL_2(\F_5)$}

By Theorem A, the field $M(X)$ it is a degree $d_{X} \leq 2$ extension of $\Q(\lambda_3,\lambda_4,\lambda_6)^{\la\Frob_5\ra}=\Q$.  But since $G \cong \PGL_2(\F_5)$, we have $d_{(X,G)}=1$ so $M(X,G)=M(X)$.

This example is interesting because the quaternion algebra $A$ is defined over $E=\Q(\sqrt{6})$ and ramified at the prime $(\sqrt{6}+2)$ over $2$.  The field $E$ has narrow class number $2$ though class number $1$---the extension $\Q(\sqrt{-2},\sqrt{-3})$ over $\Q(\sqrt{6})$ is ramified only at $\infty$.  This implies that the Shimura curve is in fact a disjoint union of two curves with an action of this strict class group.  The group $\Delta(3,4,6)$ is obtained by an Atkin-Lehner quotient, and since the prime above $2$ represents the nontrivial class in the strict class group, the involution interchanges these two curves; consequently, the quotient is something that will be defined canonically over over $E=\Q(\sqrt{6})$ .

The genus $1$ curve $X_0=X_0(3,4,6;5)$ can be computed as follows.  The ramification data above the designated points $0,1,\infty$ is $3^2, 4^1 1^2, 6^1$.  We take the point above $\infty$ to be the origin of the group law on $X_0$ and take the point $(0,1)$ to be the ramification point of order $4$.  The curve $X_0$ is then described by an equation
\[ y^2 = x^3 + \lambda_2 x^2 + \lambda_1 x + 1 = f(x) \]
and the map $\phi(x,y)=a_0+a_1x+a_2x^2+a_3x^3 + (b_0+b_1x)y = a(x)+b(x)y$ is of degree $6$.  By ramification, we must have
\[ N\phi(x,y) = a(x)^2-b(x)^2 f(x) = a_3^2 x^4 (x^2+c_1x+c_0) \]
and
\[ N(\phi(x,y)-1) = (a(x)-1)^2 - b(x)^2 f(x) = a_3^2 (x^2+d_1x+d_0)^3 \]
for some values $c_0,c_1,d_0,d_1$.  We solve the corresponding system of equations for the values $a_0,\dots,\lambda_2$ and find a unique solution defined over $E=\Q(\sqrt{6})$: after simplifying, the elliptic curve $X$ has minimal model
\[ X_0:y^2 + (\sqrt{6} + 1)xy = x^3 + (-8\sqrt{6} + 23)x + (-2\sqrt{6} + 7) \]
with $j$-invariant
\[ j(X_0)=\frac{-21355471243\sqrt{6}+55502166112}{2^5 5^9} \in \Q(\sqrt{6}) \setminus \Q, \]
showing that $M(X)=M(X,G)=\Q(\sqrt{6})$, and
\begin{align*}
 \phi(x,y) &= (186\sqrt{6} + 351)x^3 + (9504\sqrt{6} + 21564)x^2 + (25350\sqrt{6} + 58725)x + 11280\sqrt{6} + 26730 \\
&\qquad +((1611\sqrt{6} + 4401)x + (7602\sqrt{6} + 18882))y
\end{align*}
with ramification at $0,50000,\infty$.

% \subsubsection*{Genus 17, $(3,3,7)$, $\PSL_2(\F_7)$}

% We refer back to Example \ref{Ohyeahp7}.  The corresponding projective trace triple is a ``mixed commutative-projective'' triple, so should have some extra descent.

% \subsubsection*{Genus 19, $( 2, 7, 7 )$, $\PSL_2(\F_7)$}

% No comment.

% \subsubsection*{Genus 19, $( 2, 5, 5 )$, $\PSL_2(\F_9)$ and $( 4, 4, 5 )$, $\PGL_2(\F_5)$}

% The triangle group $\Deltabar(2,4,5)$ is maximal and contains $\Deltabar(2,5,5)$ with index $2$ and contains $\Deltabar(4,4,5)$ with index $6$.  The quaternion algebra $A$ is defined over $E=\Q(\sqrt{5})$ and is ramified at $(2)$; the group associated to units in the maximal order is $\Deltabar(2,5,5)$, and the quotient by the Atkin-Lehner involution is $\Delta(2,4,5)$.  The cover $X(2,5,5;9)$ is thus obtained by the usual congruence subgroup of level $(3)$.

% Interestingly, the extension of this cover is a group containing $\PSL_2(\F_9)$ with index $2$ which appears to contain $\PGL_2(\F_5)$ as a subgroup!  (Maybe this group is $\SL_2(\F_9)$?)  Consequently the curve $X(4,4,5;5)$ appears in between.

% \subsubsection*{Genus 21, $( 3, 6, 6 )$, $\PGL_2(\F_5)$}

% No comment.

% \subsubsection*{Genus 22, $( 2, 4, 8 )$, $\PGL_2(\F_7)$; $( 4, 4, 4 )$, $\PSL_2(\F_7)$}

% No comment.

\subsubsection*{Genus 24, $(4,5,6)$, $\PGL_2(\F_5)$}

Finally we arrive at the first of two nonarithmetic curves.

Elkies has computed another interesting subcover of genus $1$, namely, the curve $E$ corresponding to the permutation representation of $\PGL_2(\F_5)$ as $S_5$: it is the curve
\[ y^2 + (17+2\sqrt{6})xy + 36(7-3\sqrt{6})y = x^3 - 36(1+\sqrt{6})x^2 \]
and the \Belyi\ function is
\[ \phi(x,y) = xy + (-9+6\sqrt{6})x^2 + (117-48\sqrt{6})y \]
with a pole of degree $5$ at infinity, a zero at $P=(0,0)$ of degree $4$, and
taking the value $2^8 3^3 (-5+2\sqrt{6})$ with multiplicity 3 at $-6P = (12(\sqrt{6}-1),-144)$ and multiplicity 2 at
$9P = (12(9-4\sqrt{6}), 48(27-10\sqrt{6}))$.

This was computed as follows.  The ramification type is $4\ 1, 5, 3\ 2$.  Put the point of ramification index $5$ as the origin of the group law and the $4$ point above $0$; let $P$ and $P'=-4P$ be the preimages with multiplicities $4,1$.  The preimages of multiplicity $3$ and $2$ are $2Q$ and $-3Q$ for some point $Q$.  But the divisor of $d\phi/\omega$, where $\omega$ is a holomorphic differential, is $3P+2(2Q)+(-3Q)-6\infty$ hence $Q=-3P$, so these preimages are $-6P$ and $9P$.

We take $E:y^2+a_1xy+a_3y=x^3+a_2x^2$ so that $P$ is at $(0,0)$ with a horizontal tangent; we scale so $a_2=a_3$ and let $a_1=a+1$ and $a_2=a_3=b$.  Then $\phi=axy-bx^2+by$.  The condition that $f-f(-6P)$ vanishes to order at least $2$ at $-6P$ and vanish at $9P$ leaves a factor $a^2-216a+48$ giving $(a,b)=(108+44\sqrt{6}, -(6084+2484\sqrt{6}))$.

\subsubsection*{Genus 24, $( 3, 4, 7 )$, $\PSL_2(\F_7)$}

We have $F=F(3,4,7)=\Q(\lambda_6,\lambda_8,\lambda_{14})=\Q(\sqrt{2},\lambda_7)$ and $E=E(3,4,7)=\Q(\lambda_3,\lambda_4,\lambda_7,\lambda_6\lambda_8\lambda_{14})=\Q(\sqrt{2},\lambda_7)=F$, and $F_7(3,4,7)=\Q$.

The curve $X(3,4,7;7)$, according to Theorem A, is defined over an at-most quadratic extension $M(X)$ of $\Q$; the curve with its automorphism group is defined over $M(X)(\sqrt{-7})$.

We find that the curve $X_0(3,4,7;7)$ has genus one and is defined over $\Q(\sqrt{2})$:
\[ E: y^2 + (\sqrt{6}+1)xy + (\sqrt{6}+1)y = x^3 + (-14\nu+3)x+(-78\nu+128) \]
with $j$-invariant
\[ j(E)=-\frac{14000471420\sqrt{6} +19227826689}{2^3 3^4 7^5} \]
and good reduction away from $2,3,7$.  The specialization $\phi=-1$ gives a Galois extension of $\Q(\sqrt{2},\sqrt{-7})$ with Galois group $\PSL_2(\F_7)$.  This gives reason to believe that $M(X)=\Q(\sqrt{2})$ and $M(X,G)=\Q(\sqrt{2},\sqrt{-7})$.

Another genus 1 quotient of $X(3,4,7;7)$ was computed by Elkies by ``turning a $p$-adic crank'' (explained in detail in the final example).  This curve is defined only over $\Q(\sqrt{2},\sqrt{-7})$, and already the $j$-invariant of the curve
\[ \frac{-28505956008\sqrt{2}\sqrt{-7} + 39863931701\sqrt{-7} + 120291604664\sqrt{2} +   15630829689}{107495424} \]
generates this field.  The subgroup $H \leq \PSL_2(\F_7)$ that gives rise to this curve is not stable under $\sigma:\sqrt{-7} \mapsto -\sqrt{-7}$; however, this elliptic curve is $2$-isogenous to its Galois conjugate by $\sigma$, and so defines a $\Q(\sqrt{2})$-curve, giving further evidence for this hypothesis.
% (He also computes that the action of $G=\PSL_2(\F_7)$ on the Jacobian of the Galois \Belyi\ curve $X(3,4,7;7)$ should contain the Jacobian of the Klein curve as a $G$-isogeny factor.)

\medskip

We conclude with an example which extends beyond our table but illustrates an important point.

\begin{exm}
Consider the triple $(3,5,6)$ and the prime $11$.  We obtain from Theorem B a $\PSL_2(\F_{11})$-cover, and this is the curve with smallest genus in the list of all $\PSL_2$-covers such that $a \neq 2$ and $p \nmid abc$.  This is the extreme case of Theorem A, where no hypothesis allows us to reduce $d_X$ or $d_{(X,G)}$ or deduce something about $M(X)$ from $M(X,G)$.

We compute that
\[ F_{11}(3,5,6)^{\la \Frob_{11} \ra} = \Q(\lambda_3,\lambda_5,\lambda_6)^{\la \Frob_{11}\ra}=\Q(\lambda_5) = \Q(\sqrt{5}) \]
and so $M(X)$ is a degree at most $2$ extension of $\Q(\lambda_5)$ quadratic over
\[ E(3,5,6)=\Q(\lambda_3,\lambda_5,\lambda_6,\lambda_6\lambda_{10}\lambda_{12})=\Q(\sqrt{3},\sqrt{5}). \]

The curve $X_0(3,5,6;11)$ has genus $2$, and so offers additional computational difficulties.  We instead compute the curve $E$ which arises from the quotient by the subgroup $A_5 \subseteq \PSL_2(\F_{11})$.  This subcover $E \to \PP^1$ is of genus $1$ and has degree $11=\#\PSL_2(\F_{11})/\#A_5$.

An equation for this curve was computed by Sijsling and the second author \cite{SijslingVoight}: we find a single Galois orbit of curves defined over the field $\Q(\sqrt{5},\sqrt{3},\sqrt{b})$ where
\[ b=4\sqrt{3} + \frac{1}{2}(11+\sqrt{5}); \]
with $N(b)=11^2$; alternately, it is given by extending by a root $\beta$ of the equation
\[ T^2 - \frac{1+\sqrt{5}}{2}T - (\sqrt{3}+1) = 0. \]
The elliptic curve has minimal model:
\begin{footnotesize}
\begin{align*}
&y^2 + ((1/2(13\sqrt{5} + 33)\sqrt{3} + 1/2(25\sqrt{5} + 65))\beta +
    (1/2(15\sqrt{5} + 37)\sqrt{3} + (12\sqrt{5} + 30)))xy \\
&\quad + (((8\sqrt{5} + 15)\sqrt{3} + 1/2(31\sqrt{5} +
    59))\beta + (1/2(13\sqrt{5} + 47)\sqrt{3} + 1/2(21\sqrt{5} + 77)))y  \\
&\quad = x^3 + ((1/2(5\sqrt{5} +
    7)\sqrt{3} + 1/2(11\sqrt{5} + 19))\beta + (1/2(3\sqrt{5} + 17)\sqrt{3} + (2\sqrt{5} + 15)))x^2  \\
&\quad +
    ((1/2(20828483\sqrt{5} + 46584927)\sqrt{3} + 1/2(36075985\sqrt{5} + 80687449))\beta \\
&\qquad    +(1/2(21480319\sqrt{5} + 48017585)\sqrt{3} + 1/2(37205009\sqrt{5} + 83168909)))x  \\
&\quad +
    (((43904530993\sqrt{5} + 98173054995)\sqrt{3} + 1/2(152089756713\sqrt{5} +
    340081438345))\beta \\
&\qquad +((45275857298\sqrt{5} + 101240533364)\sqrt{3} + (78420085205\sqrt{5} +
    175353747591)))
\end{align*}
\end{footnotesize}

The $j$-invariant of this curve generates the field $\Q(\sqrt{5},\sqrt{3},\sqrt{\beta})$.  So this gives strong evidence for our Theorem A, and shows that it is ``best possible'' in that one may indeed obtain a nontrivial extension in one of the unknown quadratic extensions which arise from the failure of (weak) rigidity.
\end{exm}

\begin{rmk}
Elkies has suggested that further examples could be obtained by considering triangle covers which are arithmetic, but not congruence.
\end{rmk}

\end{document}